\documentclass[12pt]{amsart}
\usepackage{latexsym, amsmath, amssymb, amsthm, mathrsfs}
\usepackage{paralist}
\usepackage[colorlinks=true,linkcolor=blue,urlcolor=blue, citecolor=blue]%
  {hyperref}
\usepackage[T1]{fontenc}
\usepackage{mathptmx}
\usepackage{microtype}

\usepackage{comment}
\usepackage{MnSymbol}
\usepackage[]{graphicx}
\usepackage{tikz}
\usepackage[centering, includeheadfoot, hmargin=1in, vmargin=1in,
  headheight=30.4pt]{geometry}
\newtheorem{lemma}{Lemma}[section]
\newtheorem{theorem}[lemma]{Theorem}
\newtheorem{thm}[lemma]{Theorem}
\newtheorem{corollary}[lemma]{Corollary}
\newtheorem{cor}[lemma]{Corollary}
\newtheorem{proposition}[lemma]{Proposition}
\newtheorem{exm}[lemma]{Example}
\newtheorem*{thm*}{Theorem}
\newtheorem*{def*}{Definition}
\theoremstyle{definition}
\newtheorem{Def}[lemma]{Definition}
\newtheorem{remark}[lemma]{Remark}
\newtheorem{rem}[lemma]{Remark}

\renewcommand{\theequation}%
{\arabic{section}.\arabic{lemma}.\arabic{equation}}

\renewcommand{\AA}{\ensuremath{\mathbb{A}}} 
\newcommand{\CC}{\ensuremath{\mathbb{C}}} 
\newcommand{\NN}{\ensuremath{\mathbb{N}}}
\newcommand{\PP}{\ensuremath{\mathbb{P}}} 
\newcommand{\RR}{\ensuremath{\mathbb{R}}} 
 
\newcommand{\ZZ}{\ensuremath{\mathbb{Z}}} 
\newcommand{\VV}{\ensuremath{\mathbb{V}}} 

 
\newcommand{\Osh}{\ensuremath{\mathcal{O}}} 
 
\newcommand\cF{\mathcal{F}}

\renewcommand{\geq}{\geqslant}
\renewcommand{\leq}{\leqslant}
\newcommand\wh{\widehat}
\newcommand\scp[1]{\langle #1 \rangle}
\DeclareMathOperator{\ev}{ev}

\DeclareMathOperator{\codim}{codim}

\DeclareMathOperator{\conv}{conv}

\DeclareMathOperator{\Ker}{ker}

\DeclareMathOperator{\Proj}{Proj}

\DeclareMathOperator{\reg}{reg}

\DeclareMathOperator{\Span}{span}
\DeclareMathOperator{\Spec}{Spec}

\DeclareMathOperator{\hf}{HF}

\newcommand{\mm}{\mathfrak{m}}

\newcommand\psd{positive semidefinite}

\begin{document}
\title{Do Sums of Squares Dream of Free Resolutions?}
\author{Grigoriy Blekherman}
\address{Grigoriy Blekherman, School of Mathematics, Georgia Institute of Technology, 686 Cherry Street, Atlanta GA, 30332, USA}
\email{greg@math.gatech.edu }
\author{Rainer Sinn}
\address{Rainer Sinn, School of Mathematics, Georgia Institute of Technology, 686 Cherry Street, Atlanta GA, 30332, USA}
\email{sinn@math.gatech.edu }
\author{Mauricio Velasco}
\address{Mauricio Velasco, Departamento de
  Matem\'aticas\\ Universidad de los Andes\\ Carrera 1 No. 18a 10\\ Edificio
  H\\ Primer Piso\\ 111711 Bogot\'a\\ Colombia \textrm{and} Departamento de
  Matem\'aticas y Aplicaciones\\ Universidad de la Rep\'ublica (CURE)\\
  Maldonado\\ Uruguay}
\email{mvelasco@uniandes.edu.co or mvelasco@cmat.edu.uy}

\keywords{sums of squares, spectrahedra, free resolutions, Castelnuovo-Mumford regularity}
\subjclass[2010]{14P05, 13D02, 52A99, 05C50}

\begin{abstract}
For a real projective variety $X$, the cone $\Sigma_X$ of sums of squares of linear forms plays a fundamental role in real algebraic geometry.
The dual cone $\Sigma_X^*$ is a spectrahedron and we show that its convexity properties are closely related to homological properties of $X$. For instance, we show that all extreme rays of $\Sigma_X^*$ have rank one if and only if $X$ has Castelnuovo-Mumford regularity two. More generally, if $\Sigma_X^*$ has an extreme ray of rank $p>1$, then $X$ does not satisfy the property $N_{2,p}$. We show that the converse also holds in a wide variety of situations: the smallest $p$ for which property $N_{2,p}$ does not hold is equal to the smallest rank of an extreme ray of $\Sigma_X^*$ greater than one. We generalize the work of Blekherman-Smith-Velasco on equality of nonnegative polynomials and sums of squares from irreducible varieties to reduced schemes and classify all spectrahedral cones with only rank one extreme rays. Our results have applications to the positive semidefinite matrix completion problem and to the truncated moment problem  on projective varieties.
\end{abstract}

\maketitle

\section{Introduction}
Minimal free resolutions and spectrahedra are central objects of study in commutative algebra and convex geometry respectively. We connect these disparate areas via real algebraic geometry and show a surprisingly strong connection between convexity properties of certain spectrahedra and the minimal free resolution of the defining ideal of the associated real variety. In the process, we address fundamental questions on the relationship between nonnegative polynomials and sums of squares.

In real algebraic geometry, we associate two convex cones to a real projective variety $X$: the cone $P_X$ of quadratic forms that are nonnegative on $X$, and the cone $\Sigma_X$ of sums of squares of linear forms. 
A recent line of work shows that convexity properties of these cones are strongly related to geometric properties of the variety $X$ over the complex numbers \cite{BleNP},\cite{BSV}, and \cite{BIKV}. We extend these novel connections into the realm of homological algebra by showing a direct link between the convex geometry of the dual convex cone $\Sigma_X^*$ and property $N_{2,p}$ of the defining ideal of $X$: for an integer $p\geq 1$, the scheme $X$ satisfies property $N_{2,p}$ if the $j$-th syzygy module of the homogeneous ideal of $X$ is generated in degree $\leq j+2$ for all $j<p$. 

The dual cone $\Sigma_X^*$ to sums of squares is naturally a spectrahedron, i.e. it is a section of the cone of positive semidefinite matrices with a linear subspace. By analogy with univariate polynomials, we call $\Sigma_X^*$ \textit{the Hankel spectrahedron} of $X$. The rank of an extreme ray of $\Sigma_X^*$ is, by definition, the rank of any symmetric matrix corresponding to a point spanning the ray. Our first main result is the following: if the Hankel spectrahedron $\Sigma_X^*$ has an extreme ray of rank $p>1$, then $X$ does not satisfy property $N_{2,p}$. 

The largest $p$ for which the property $N_{2,p}$ holds is called the \textit{Green-Lazarsfeld index} of $X$, see \cite{Brunsetal}. We call the smallest integer $p$ such that the Hankel spectrahedron has an extreme ray of rank $p>1$, \textit{the Hankel index} of $X$. Using this terminology, our first main result says that the Hankel index is always at least one more than the Green-Lazarsfeld index. Our second main result is that this inequality is in fact an equality in a wide range of situations: for varieties (reduced schemes) of regularity $2$, canonical models of general curves of genus at least $4$, and subspace arrangements defined by quadratic square-free monomial ideals. The proofs of our two main results appear in Section~\ref{SEC: mainThm1} and Section~\ref{SEC:BPF} respectively. The rest of the article is devoted to their applications to questions from real algebraic geometry, convex geometry, statistics, and real analysis.

\subsection{Applications} 
A projective variety $X$ has regularity two if the ideal of $X$ is generated by quadrics and all matrix entries in its minimal free resolution are linear forms. Equivalently, $X$ satisfies property $N_{2,p}$ for all natural numbers $p\geq 1$ and therefore its Green-Lazarsfeld index is infinite. It follows from our first main theorem that the Hankel index of $X$ is infinite, i.e. all extreme rays of the Hankel spectrahedron of $X$ have rank one and in particular $\Sigma_X^*=P_X^*$.
Conversely we classify all reduced schemes $X$ for which the cones $P_X$ and $\Sigma_X$ coincide, i.e. for which every nonnegative quadratic form on $X$ is a sum of squares. We show in Section~\ref{Sec:TRreg2} that this condition is in fact equivalent to $2$-regularity of $X$ whenever $X$ is totally real. This is a generalization of the work of \cite{BSV} from the case of irreducible varieties to reduced schemes. We find it remarkable that the generalization of varieties of minimal degree to reduced schemes in algebraic geometry, which are those with the smallest Castelnuovo-Mumford regularity, turns out to be the natural concept from the point of view of convex algebraic geometry as well.

In Section~\ref{ssec:rank1}, we use our results to solve a basic problem in convex geometry: classify spectrahedral cones all of whose extreme rays have rank one. 
We show that a spectrahedral cone $C$ has only rank one extreme rays if and only if $C$ is the Hankel spectrahedron of a reduced scheme $X$ of regularity two. Two-regular reduced schemes are completely classified in \cite{EGHP}, they consist of varieties of minimal degree which are \textit{linearly joined}; see Section \ref{Sec:TRreg2} for details. We thus obtain an explicit description of all spectrahedral cones with only rank one extreme rays. Examples of such cones were studied in \cite{AHMRMR960140, Hildebrand}, but a full classification takes us inevitably into the realm of algebraic geometry.

In Section~\ref{MatrixCompl}, we recover the positive semidefinite matrix completion theorem of Grone, et al. \cite{PSDComp,AHMRMR960140} which characterizes those partial matrices for which the positive semidefinite matrix completion problem is combinatorially as simple as possible. We show its equivalence to the theorem of Fr\"{o}berg on $2$-regularity of monomial ideals \cite{FroMR1171260} by recasting it as one about sums of squares on certain subspace arrangements. Fr\"{o}berg's theorem was generalized in \cite{EGHPMR2188445} where it is shown that the Green-Lazarsfeld index of a reduced scheme $X$ defined by quadratic monomials is equal to the length of the smallest chordless cycle of its associated graph $G$ minus $3$. We show the corresponding result holds for the Hankel index of $X$, obtaining a new extension of the positive semidefinite matrix completion theorem. It is surprising that, despite a significant amount of work in examining the extreme rays of such spectrahedra \cite{AHMRMR960140}, \cite{HeltonExtremePSD}, \cite{LaurentSeriesParallel}, \cite{LaurentSparsityOrder}, \cite{SolusUhlerYoshida}, this result was not observed. 

In statistics, we prove Theorem~\ref{thm:gaussian}, which gives necessary and sufficient conditions for the existence of a positive definite matrix completion. The existence of a positive definite completion is equivalent to the existence of the maximum likelihood estimator in a Gaussian graphical model.

In algebraic geometry, we find a new characterization of $2$-regular reduced schemes. Previously, \cite{EGHP} characterized $2$-regular schemes via a geometric property of \textit{smallness}; see Sections \ref{subsec:introbpf} and \ref{subsec:bpf} for details. 
We characterize $2$-regularity of $X$ in terms of properties of base-point-free linear series on $X$. Let $X\subseteq \PP^n$ be a reduced projective scheme with graded coordinate ring $R$. Then $X$ is $2$-regular if and only if every base-point-free linear series $W\subseteq R_1$ generates $R_2$. As explained in Section~\ref{subsec:HIndex} this characterization was motivated by our investigation of the Hankel index.

Finally, we also discuss applications to truncated moment problems of real analysis in Section \ref{SEC:Moments}. In its simplest form, a truncated moment problem asks whether a given linear operator $\ell$ on functions on $X$ is the integral with respect to some Borel measure. Our main result is a new sufficient criterion based on the Green-Lazarsfeld index of $X$, which is another motivation for its study. This is also of direct relevance in polynomial optimization, as it gives a new guarantee of exactness for semidefinite relaxations.

\subsection{Rank obstructions from base-point-free linear series}\label{subsec:introbpf} Motivated by the equivalent characterizations of schemes of regularity $2$ established in \cite{EGHP}, there has been some amount of work on the connection between property $N_{2,p}$ and a geometric property of the variety $X$, which is called $p$-smallness; $X$ is $p$-small if for every subspace $L$ of dimension at most $p$ for which $\Gamma:=L\cap X$ is finite, $\Gamma$ is linearly independent. If $X$ satisfies property $N_{2,p}$, then $X$ is $p$-small and sometimes the converse is known to hold \cite{GLCurves,EGHPMR2188445}. The key to the results in this article is the introduction of a property of base-point-free linear series on $X$, which we show to lie logically between $N_{2,p}$ and $p$-smallness. We say that $X$ has the $p$-base-point-free property if every base-point-free linear series of codimension at most $p$ generates the degree two part of the homogeneous coordinate ring of $X$. As we show in Theorem~\ref{Thm:JumpNumberpBp}, the $p$-base-point-free property on $X$ provides an obstruction for the existence of extreme rays in $\Sigma_X^*$ of rank $p$.
We think that the study of the $p$-base-point-free property is, in itself, an interesting problem of complex algebraic geometry. Specifically we would like to have methods for deciding whether a variety $X$ satisfies the $p$-base-point-free property or a characterization of those varieties for which this property coincides with either $N_{2,p}$ or $p$-smallness; see related examples in Section~\ref{SEC:BPF}.

\medskip
\textbf{Acknowledgments.}
We thank Gregory G. Smith for many useful conversations. We also thank the anonymous referees for their comments that helped to improve the paper.
The first two authors were partially supported by NSF grant DMS--0757212; the third author was partially supported by CSIC-Udelar and by the FAPA funds from Universidad de los Andes.

\section{Ranks of extreme rays and property $N_{2,p}$}\label{SEC: mainThm1}
We mostly work with real projective schemes $X\subset \PP^n$. We denote by $\AA^n$ the $n$-dimensional affine space over $\RR$, which, as a set, is $\CC^n$ and is equipped with the real Zariski topology. Its closed sets are therefore the zero-sets of real polynomials. The ring of regular functions on $\AA^n$ is the polynomial ring $\RR[x_1,\dots,x_n]$ in $n$ variables with coefficients in $\RR$. We mostly work with projective space over $\RR$, which we denote by $\PP^n$. As a set, it is the set of all lines in $\CC^{n+1}$, where we represent the line $\{ \lambda v\colon \lambda\in \CC\}$ by $[v]$ (for $v\in \CC^{n+1}\setminus\{0\}$). Its topology is the real Zariski topology whose closed sets are the zero-sets of homogeneous polynomials with real coefficients. The set of real points in $\PP^n$, denoted by $\PP^n(\RR)$, are all lines $[v]$ that can be represented by a vector $v\in\RR^{n+1}$.
More precisely, affine space $\AA^n$ is $\Spec(\RR[x_1,\dots,x_n])$ and $\PP^n = \Proj(\RR[x_0,\dots,x_n])$ (see \cite[Exercise II.4.7]{HarMR0463157} for how these definitions relate with those of the above discussion).

A reduced scheme $X\subset\PP^n$ is a closed subset of $\PP^n$ that is defined by a radical ideal $I\subset\RR[x_0,\dots,x_n]$. The homogeneous coordinate ring $\RR[X]$ of $X$ is the quotient ring $\RR[x_0,\dots,x_n]/I$. The fact that $X$ is reduced means that $\RR[X]$ has no nilpotent elements; so $\RR[X] = \RR[x_0,\dots,x_n]/I$, where $I$ is the vanishing ideal of $X$. Reduced projective schemes $X\subset\PP^n$ are in one-to-one correspondence with homogeneous radical ideals $I\subseteq \RR[x_0,\dots,x_n]$. 

A reduced scheme $X\subset\PP^n$ is totally real if the real points $X(\RR)$ are Zariski dense in $X$. Equivalently, the homogeneous ideal of polynomials vanishing on $X$ is real radical \cite[Section 4.1]{BochnakMR1659509}.

Throughout the paper, we write $S$ for the polynomial ring $\RR[x_0,\dots,x_n]$ and $S_d$ for the vector space of homogeneous polynomials of degree $d$. Let $X\subseteq \PP^n$ be a reduced totally real non-degenerate closed projective subscheme with defining ideal $I\subseteq S$ and homogeneous coordinate ring $R=S/I$. 

We write $P_X\subset R_2$ for the convex cone of quadratic forms which are nonnegative on $X(\RR)$ and $\Sigma_X\subset R_2$ for the convex cone of sums of squares of linear forms.
The dual cones $P_X^*$ and $\Sigma_X^*$ are defined as follows:
$$\Sigma_X^*=\{\ell \in R_2^* \mid \ell(p) \geq 0 \,\,\,\, \text{for all} \,\,\,\, p \in \Sigma_X\}.$$
and $$P_X^*=\{\ell \in R_2^* \mid \ell(p) \geq 0 \,\,\,\, \text{for all} \,\,\,\, p \in P_X\}.$$
The inclusion map $R_2^*\rightarrow S_2^*$  dual to the quotient map canonically embeds $\Sigma_X^*$ and $P_X^*$ into $S_2^*$. More specifically, the inclusion map identifies a linear functional $\ell \in R_2^*$ with the quadratic form $Q_\ell \in S_2^*$, $Q_\ell(p)=\ell(p^2)$, for all $p \in S_1$. Therefore, we think of the dual cones $\Sigma_X^*$ and $P_X^*$ as subsets of $S^*_2$. We write $S_+$ for the cone of positive semidefinite quadratic forms on $S_1$. We observe that under this identification the cone $\Sigma_X^*$ is a spectrahedron:
$$\Sigma_X^*=S_+ \cap I_2^\perp,$$
where $I_2^\perp\subset S_2^*$ is the subspace consisting of linear functionals $\ell\in S_2^*$ such that $\ell(p)=0$ for all $p \in I_2$. 

\begin{Def}
We call the dual cone $\Sigma_X^*$ the \emph{Hankel spectrahedron} of $X$. We call the smallest integer $p>1$ such that the Hankel spectrahedron has an extreme ray of rank $p$ the \emph{Hankel index} of $X$ and denote it by $\eta(X)$. Our convention is that the Hankel index is infinite if all extreme rays of the Hankel spectrahedron $\Sigma_X^*$ have rank $1$.
\end{Def}

Evaluation at a point $x\in \RR^{n+1}$ determines a map $\ev_x\colon S_2\rightarrow \RR$ called the real point evaluation at $x$. If $[x]\in \PP^n(\RR)$, then different affine representatives $\tilde{x}$ of $[x]$ in $\RR^{n+1}$ define point evaluations which are related by multiplication by a positive real number. Abusing terminology, we refer to the point evaluation at $[x]$ to mean the point evaluation at any such $\tilde{x}$. A form $\ell\in S_2^\ast$ has rank one if and only if $\ell$ or $-\ell$ is a real point evaluation. 

If $X$ is set-theoretically defined by quadrics, it follows that the extreme rays of $\Sigma_X^*$ of rank $1$ are the evaluations at points of $X(\RR)$. Moreover, the point evaluations at points of $X(\RR)$ are precisely the extreme rays of the dual cone $P_X^*$, cf.~\cite[Lemma 4.18]{BPTMR3075433}.
Hence, the Hankel spectrahedron $\Sigma_X^*$ must have extreme rays of rank strictly greater than one if the cones $\Sigma_X$ and $P_X$ are not equal. 

\subsection{Which spectrahedral cones are Hankel?}
Let $L$ be a linear subspace of $S_2^*$. 

\begin{lemma}\label{Lem:spectrahedral}
The spectrahedral cone $L\cap S_+$ is the Hankel spectrahedron of a reduced and totally real scheme that is set-theoretically defined by quadrics if and only if the linear space $L$ is spanned by elements of rank $1$.
\end{lemma}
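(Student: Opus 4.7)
The plan is to identify, in both implications, the linear space $L$ with the annihilator $I(X)_2^\perp \subseteq S_2^*$ of the degree-two vanishing ideal of a suitable reduced, totally real scheme $X \subseteq \PP^n$ that is set-theoretically cut out by quadrics, so that the identity $\Sos_X^* = I(X)_2^\perp \cap S_+$ becomes $\Sos_X^* = L \cap S_+$.

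For the forward direction, the equality $L \cap S_+ = \Sos_X^*$ forces $L = I(X)_2^\perp$. A rank-one element of $S_2^*$ is, by the remark preceding the lemma, a signed real point evaluation $\pm\ev_{[x]}$, and it lies in $I(X)_2^\perp$ iff every $q \in I(X)_2$ vanishes at $x$, iff $[x] \in \variety(I(X)_2) = X$ by the set-theoretic quadratic hypothesis. Since $X$ is totally real, $I(X)_2 = \{q \in S_2 : q|_{X(\RR)} \equiv 0\}$, whose orthogonal complement is $\Span_\RR\{\ev_{[x]} : [x] \in X(\RR)\}$; hence $L$ is spanned by rank-one elements.

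For the converse, assume $L$ is spanned by rank-one elements, set $J_2 := L^\perp$, and let $T := \{[x] \in \PP^n(\RR) : \ev_{[x]} \in L\}$. Duality gives $T = \variety(J_2)(\RR)$, and the rank-one spanning hypothesis is equivalent to the identity $J_2 = \{q \in S_2 : q|_T \equiv 0\}$. I would take $X$ to be the reduced subscheme of $\PP^n$ whose complex points are $\variety(J_2)$. By construction $X$ is reduced and set-theoretically cut out by the quadrics in $J_2$, and the sandwich
\[
J_2 \subseteq I(X)_2 \subseteq \{q \in S_2 : q|_{X(\RR)} \equiv 0\} = J_2
\]
forces $I(X)_2 = J_2$, whence $\Sos_X^* = L \cap S_+$.

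The principal obstacle is verifying that this $X$ is totally real, equivalently that the real radical and the ordinary radical of $(J_2)$ coincide. The degree-two hypothesis $\sqrt[\RR]{(J_2)}_2 = J_2$ propagates down to degree one: for $\ell \in \sqrt[\RR]{(J_2)}_1$, each product $\ell \cdot x_i \in \sqrt[\RR]{(J_2)}_2 = J_2$ vanishes on $\variety(J_2)_\CC$, so at every projective point $[z] \in \variety(J_2)_\CC$ we have $\ell(z) z_i = 0$ for all $i$; choosing a nonzero projective representative forces $\ell(z) = 0$, giving $\ell \in \sqrt{(J_2)}_1$. I would then extend this multiplication trick inductively to higher degrees: any putative element of the real radical lying outside the ordinary radical would, after multiplication by suitable linear forms and reduction using the already-established lower-degree agreement, produce an excess element in degree two contradicting the hypothesis. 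This identifies $\variety(J_2)$ with the Zariski closure of its real points, so $X$ is totally real.
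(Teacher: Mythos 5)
Your forward direction is essentially the paper's argument and is fine: for a reduced, totally real $X$ cut out set-theoretically by quadrics, $I(X)_2$ is exactly the space of quadrics vanishing on $X(\RR)$, so its annihilator is spanned by real point evaluations. The divergence, and the problem, is in the converse. The paper takes $Z$ to be the set of real points whose evaluations are the rank-one elements of $L$ and defines $X$ to be the \emph{Zariski closure of $Z$}; then $X$ is totally real by construction, and $I(X)_2=L^\perp$ follows because a quadric vanishes on $\overline{Z}$ iff it vanishes on $Z$. You instead take $X=\variety(J_2)$ with $J_2=L^\perp$. This makes ``reduced'' and ``set-theoretically defined by quadrics'' automatic and your sandwich $J_2\subseteq I(X)_2\subseteq\{q:q|_{X(\RR)}\equiv 0\}=J_2$ is correct, but it transfers the entire difficulty into proving that $\variety(J_2)$ is totally real --- a statement that is by no means formal, since $\variety(J_2)$ can be strictly larger than the Zariski closure of the real point set $T$ (already for $T$ a set of eight general real points in $\PP^3$, where $\variety(J_2)$ is an elliptic quartic curve).

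The gap is in your total-reality argument. The degree-one step is correct, but it only shows that no \emph{linear} form vanishes on $\variety(J_2)(\RR)$ without vanishing on all of $\variety(J_2)$; total reality requires this for forms of every degree. The proposed ``inductive multiplication trick'' has no mechanism in degrees $\geq 3$: multiplying a degree-$d$ element of the real radical by linear forms produces elements of degree $d+1$, whereas your only anchor is the equality $\sqrt[\RR]{(J_2)}_2=J_2$ in degree two (plus the derived degree-one statement). There is no way to ``reduce'' a cubic in the real radical to degree-two data --- indeed, two schemes can share their degree-one and degree-two vanishing ideals and still differ in degree three (eight points on an elliptic quartic versus the quartic itself), so no formal bootstrapping from low degrees can work. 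The fix is to adopt the paper's construction: let $X$ be the reduced scheme supported on the Zariski closure of the real points corresponding to the rank-one elements of $L$; total reality is then built in, and $L^\perp=I(X)_2$ gives $\Sigma_X^*=L\cap S_+$ directly, with no real Nullstellensatz argument needed.
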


\begin{proof}
If $X$ is reduced, totally real, and set-theoretically defined by quadrics, then a quadric $q\in S_2$ vanishes on $X$ if and only if it is annihilated by the evaluations at points of $X(\RR)$. Thus, the linear space $L =I_2^{\perp}$ of linear functionals vanishing on $I_2$ is spanned by elements of rank $1$.  

Conversely, suppose that $L$ is spanned by elements of rank $1$. Up to a sign, the rank $1$ elements of $L$ are precisely the rank $1$ extreme rays of $L\cap S_+$, which are point evaluations. Let $Z$ be the set of points in $\PP^n$ corresponding to these extreme rays. Then $L^{\perp}$ consists of the quadrics vanishing at all points in $Z$. Letting $X$ be the reduced subscheme of $\PP^n$ defined by the Zariski closure of $Z$, we have $\Sigma_X^*=L\cap S_+$, proving the claim.
\end{proof}

\subsection{Base-point-free linear series and the Hankel index.}\label{subsec:HIndex}
A linear series is a vector subspace $W\subseteq R_1$. We say that $W$ is base-point-free on $X$ if the linear forms in $W$ have no common zeros in $X$. In this section, we introduce the $p$-base-point-free property of linear series. We prove that this property is useful in bounding the Hankel index of $X$ and that it is closely related with the geometry of syzygies of $X$. Our methods also allow us to compute the Hankel index of several classes of varieties in Theorem~\ref{Thm:MainJN}.

\begin{Def}
Let $p>0$ be an integer. We say that $X\subseteq \PP^n$ satisfies the $p$-base-point-free property if the ideal generated in the homogeneous coordinate ring $R$ of $X$ by every base-point-free linear series $W$ of codimension at most $p$ contains $R_2$.  
\end{Def}

\begin{theorem}\label{Thm:JumpNumberpBp}
Let $X\subseteq \PP^n$ be a real non-degenerate closed subscheme. If $X$ satisfies the $p$-base-point-free property, then the Hankel index $\eta(X)$ is at least $p+1$.
\end{theorem}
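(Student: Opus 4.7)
I will argue by contradiction. Suppose $\Sigma_X^*$ admits an extreme ray $\ell$ of rank $r$ with $1 < r \leq p$, and set $W := \ker Q_\ell \subseteq S_1 = R_1$ (using non-degeneracy, $I_1 = 0$), so that $W$ is a linear series of codimension $r \leq p$. First, $\ell$ vanishes on $W \cdot R_1 \subseteq R_2$: since $Q_\ell$ is psd with kernel $W$, Cauchy--Schwarz applied to its polarization gives $\ell(wv)^2 \leq Q_\ell(w) Q_\ell(v) = 0$ for all $w \in W$ and $v \in R_1$. Because $\ell$ does not vanish identically on $R_2$, we conclude $W \cdot R_1 \subsetneq R_2$. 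If I can also show that $W$ is base-point-free on $X$, then the $p$-base-point-free hypothesis applied to $W$ forces $W \cdot R_1 = R_2$, which would be the desired contradiction.

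To prove base-point-freeness, suppose for contradiction that $W$ has a common zero $[x] \in X(\CC)$, and write $x = a + ib$ with $a, b \in \RR^{n+1}$. Since $W$ consists of real forms, both $a$ and $b$ lie in $W^\perp$. If $b = 0$ then $[a] \in X(\RR)$, and $\ev_a \in \Sigma_X^*$ is a rank-one functional whose kernel contains $W$; since $\ev_a$ then lies in the face of $\Sigma_X^*$ generated by $\ell$, extremality forces $\ev_a \propto \ell$, contradicting $r > 1$. Otherwise $a, b$ are $\RR$-linearly independent, and I consider the symmetric matrix $A := aa^T - bb^T \in S_2^*$, which has three key properties: (i) $\ker A \supseteq W$ (since $a, b \in W^\perp$); (ii) neither $A$ nor $-A$ is positive semidefinite---for if $A \succeq 0$ then $aa^T = A + bb^T \succeq bb^T$ would force $b \in \RR a$, contradicting linear independence, and symmetrically for $-A$; (iii) $A \in I_2^\perp$, because $q(a \pm ib) = 0$ for every $q \in I_2$ gives $q(a) = q(b)$, so $A(q) = q(a) - q(b) = 0$. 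Since $Q_\ell$ is positive definite on $W^\perp$ and $A$ vanishes on $W$, for all sufficiently small $|t|$ the perturbation $Q_\ell + tA$ is psd with kernel exactly $W$, so $\ell + tA \in \Sigma_X^*$. As $A$ is indefinite but $\ell$ is psd, $A$ is not a scalar multiple of $\ell$, and the family $\{\ell + tA\}_{|t| < \epsilon}$ exhibits $\ell$ as a non-extreme point of $\Sigma_X^*$, contradicting the initial assumption.

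Thus $W$ is base-point-free on $X$, and the $p$-base-point-free hypothesis applied to $W$ of codimension $r \leq p$ forces $W \cdot R_1 = R_2$, contradicting the first paragraph. The hard part is the complex case: the obvious psd candidate $aa^T + bb^T$ associated to the conjugate pair $\{x, \bar x\}$ need not lie in $I_2^\perp$, because $(aa^T + bb^T)(q) = 2 q(a)$ for $q \in I_2$ is generically nonzero. The key trick is to pass to $A := aa^T - bb^T$ instead: the identity $q(a) = q(b)$ forced by $[x] \in X$ places $A$ automatically in $I_2^\perp$, while its indefiniteness (guaranteed by the non-reality of $[x]$) provides precisely the perturbation direction needed to dislodge $\ell$ from the extreme rays of the Hankel spectrahedron.
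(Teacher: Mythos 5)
Your proof is correct and follows essentially the same route as the paper: show that the kernel $W$ of an extreme ray of rank $r$ with $1<r\leq p$ is a base-point-free linear series of codimension $r$, and then contradict the $p$-base-point-free property since $\ell$ annihilates $W\cdot R_1$. The only cosmetic differences are that for a non-real base point the paper perturbs by the imaginary part of $\ev_\alpha$ (the matrix $ab^T+ba^T$) rather than your real part $aa^T-bb^T$ (both are indefinite, annihilate $I_2$, and kill $W$), and the paper invokes the Ramana--Goldman characterization of extreme points of spectrahedra where you carry out the equivalent perturbation argument by hand.
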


\begin{proof}
Suppose $\RR_+\ell \subset S_2^*$ is an extreme ray of $\Sigma_X^*$ of rank greater than $1$. Write $B_\ell$ for the bilinear form on $R_1\times R_1$, which takes $(f,g)$ to $B_\ell(f,g) = \ell(fg)$. We will show that its kernel $W:=\ker(B_{\ell})\subseteq R_1$ is a base-point-free linear series on $X$.   
Assume for contradiction that there is a zero $\alpha\in \PP^n$ of $W$ in $X$. If $\alpha$ is real (resp.~complex), then the evaluation at $\alpha$ (resp.~the imaginary part of the evaluation at $\alpha$) defines a real linear functional $\ell_\alpha\in S_2^*$ with $\ell_\alpha(I_2)=0$ and with the property that ${\rm ker}(B_{\ell_\alpha})$ contains $W$.
Since $\ell$ is an extreme ray of the spectrahedron $\Sigma_X^*$, we conclude from \cite[Corollary 3]{RamanaGoldman} that $\ell=\lambda \ell_\alpha$ for some $\lambda>0$. This is impossible because $\ell_\alpha$ has rank one (resp.~because $\ell_\alpha$ is not positive semidefinite) and thus we conclude that $W$ is base-point-free on $X$. 
If $\ell$ has rank $\eta(X)\leq p$, then $I+(W)_2\supseteq (x_0,\dots,x_n)^2$ because $X$ satisfies the $p$-base-point-free property. This contradicts the existence of the linear functional $\ell$ which annihilates $(W)_2$. We conclude that the Hankel index of $X$ is at least $p+1$ as claimed. 
\end{proof}

\subsection{Base-point-free linear series and property $N_{2,p}$.}\label{subsec:bpf}
Let $k$ be a field of characteristic $0$ and let $S=k[x_0,\dots, x_n]$ be the polynomial ring with the usual grading by total degree. A finitely generated graded $S$-module $M$ has a unique minimal free resolution  
\[ \cdots \xrightarrow{\phi_{t+1}} F_t\xrightarrow{\phi_t} F_{t-1}\xrightarrow{\phi_{t-1}} \cdots \xrightarrow{\phi_1}F_0\rightarrow M\rightarrow 0\]
with $F_i=\bigoplus_{n\in \ZZ} S(-n)^{b_{i,n}}$ for some natural numbers $b_{i,n}$. We denote by $t_i(M)$ the largest degree of a minimal generator in the module $F_i$ with the convention that $t_i(M)=-\infty$ if $F_i=0$.
The module $M$ is $m$-regular if and only if $t_i(M)\leq m+i$ for all $i\geq 0$. The regularity of $M$ is the smallest such $m$.
In terms of local cohomology supported at the homogeneous maximal ideal, we can define the regularity of an artinian graded $S$-module as its largest nonzero degree and for any finitely generated module $M$ by the formula $\reg(M)=\max_{i\geq 0}\{i+\reg H^i_{\mm}(M)\}$~\cite[Section 4B]{EisMR2103875}.

Assume $X\subseteq \PP^n$ is a non-degenerate, i.e.~not contained in a hyperplane, closed subscheme and let $I\subseteq S$ be its saturated homogeneous ideal. The scheme $X$ is $m$-regular if the graded $S$-module $I$ is. The scheme $X$ satisfies property $N_{2,p}$ for an integer $p\geq 1$
if the inequality $t_i(I)\leq 2+i$ holds for all $0\leq i\leq p-1$. In other words, $X$ satisfies property $N_{2,p}$ if and only if $I$ is generated by quadrics and the first $p-1$ maps in its minimal free resolution are represented by matrices of linear forms.  

The homological property $N_{2,p}$ relates to the following geometric property, see \cite{EGHP} and \cite{GLCurves}.
\begin{Def} Let $p>0$ be an integer. We say that $X\subseteq \PP^n$ is \emph{$p$-small} if for every linear space $L\subseteq \PP^n$ of dimension at most $p$ for which $Y:=X\cap L$ is finite, the scheme $Y$ is linearly independent, i.e.~the dimension of its span is $1+\deg(Y)$.
\end{Def}

The following theorem shows that the $p$-base-point-free property lies in between property $N_{2,p}$ and $p$-smallness.

\begin{theorem}\label{thm:pBp}
Let $p\geq 1$ be an integer and let $X\subset\PP^n$ be a non-degenerate closed reduced scheme. Consider the properties
\begin{enumerate}
\item $X$ satisfies property $N_{2,p}$;
\item $X$ satisfies the $p$-base-point-free property;
\item $X$ is $p$-small.
\end{enumerate}
The implications $(1)\implies (2)\implies (3)$ hold. Moreover, any one of these properties holds for all $p\geq 1$ if and only if $\reg(X)=2$.
\end{theorem}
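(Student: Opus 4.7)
The plan is to prove the two implications separately and then deduce the moreover clause by combining them with the Eisenbud-Green-Hulek-Popescu characterization of $2$-regular schemes.

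For $(1) \Rightarrow (2)$, fix a base-point-free subspace $W \subseteq R_1$ of codimension $c \leq p$ and choose a lift $\tilde W \subseteq S_1$, which is automatically a regular sequence in $S$ since it consists of linearly independent linear forms. The goal is to show $(R/(W))_2 = 0$, equivalently that the restriction map $I_2 \to \Sym^2(S_1/\tilde W)$ is surjective. The strategy is to compute $\mathrm{Tor}^S_\bullet(R, S/(\tilde W))$ in two ways: via the Koszul complex $K(\tilde W; R)$ (which computes the Tor because $\tilde W$ is regular), and via the minimal free resolution $F_\bullet \to R$ tensored with $S/(\tilde W)$. Under $N_{2,p}$, the modules $F_i$ are purely linear (i.e.\ $F_i = S(-i-1)^{b_i}$) for $1 \leq i \leq p$, while the base-point-free hypothesis forces $\mathrm{Tor}^S_\bullet(R, S/(\tilde W))$ to be supported at $\mm$ and hence Artinian. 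Bookkeeping through the resulting double complex then bounds the $S$-regularity of $R/(W)$ by $1$, giving $(R/(W))_2 = 0$. I expect the main obstacle to be carefully tracking which terms survive in the spectral sequence under the codimension bound $c \leq p$, so as to exploit the linear strand provided by $N_{2,p}$ at exactly the right step.

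For $(2) \Rightarrow (3)$, I argue the contrapositive. Suppose $X$ is not $p$-small, witnessed by a linear subspace $L \subseteq \PP^n$ of dimension $\leq p$ with $Y := X \cap L$ finite and linearly dependent, i.e.\ $h_Y(1) < \deg Y$. Replacing $L$ by $\Span(Y)$ (which still has dimension $\leq p$ and still satisfies $L \cap X = Y$), I may assume $L = \Span(Y)$. Since $Y$ is finite, there exists $\ell \in R_1$ not vanishing at any point of $Y$. Set $W' := (I_L)_1 + \langle \ell \rangle \subseteq R_1$; then $W'$ has codimension $\dim L \leq p$ and is base-point-free on $X$, since its base locus is $V(\ell) \cap Y = \emptyset$. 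Restricting to the (reduced) coordinate ring of $Y$, every element of $(I_L)_1$ vanishes, so $(W' \cdot R_1)|_Y = \ell|_Y \cdot R_1|_Y$; since $\ell|_Y$ is a unit, this subspace has dimension exactly $h_Y(1)$. The image of $R_2$ in the coordinate ring of $Y$ has dimension $h_Y(2)$, and the classical fact that the Hilbert function of a reduced $0$-dimensional scheme is strictly increasing until it stabilizes at $\deg Y$ gives $h_Y(2) > h_Y(1)$. Thus $W' \cdot R_1 \subsetneq R_2$, contradicting the $p$-base-point-free property.

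For the moreover clause, the equivalence $\reg(X) = 2 \iff t_i(I) \leq i + 2$ for all $i \geq 0$ captures exactly the assertion that $N_{2,p}$ holds for every $p \geq 1$. Thus the ``for all $p$'' version of $(1)$ is equivalent to $\reg(X) = 2$, and the implications $(1) \Rightarrow (2) \Rightarrow (3)$ propagate this to the corresponding versions of $(2)$ and $(3)$. For the reverse direction I invoke the main theorem of Eisenbud-Green-Hulek-Popescu~\cite{EGHP}, which shows that $p$-smallness for every $p \geq 1$ already forces $\reg(X) = 2$, closing the circle of equivalences.
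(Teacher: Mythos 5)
Your argument for $(2)\Rightarrow(3)$ is correct and is essentially the paper's: both hinge on augmenting the forms cutting out $L=\Span(Y)$ by a single linear form $\ell$ avoiding the finite set $Y$ to get a base-point-free series $W'$ of codimension $\leq p$, and then comparing Hilbert functions of $Y$ in degrees $1$ and $2$; you phrase it as a contrapositive using strict growth of the Hilbert function of a finite scheme, the paper uses the first-difference identity, but the content is identical. Your treatment of the ``moreover'' clause (regularity $2$ is literally $N_{2,p}$ for all $p$, and $p$-smallness for all $p$ forces regularity $2$ by the main theorem of Eisenbud--Green--Hulek--Popescu) also matches the paper.

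The problem is $(1)\Rightarrow(2)$. The paper disposes of this implication by citing Corollary 5.2 of Eisenbud--Huneke--Ulrich, and that citation is carrying real weight: the implication is a genuine theorem, not a formal consequence of the setup you describe. Your double complex $F_\bullet\otimes K(\tilde W;S)$ yields, via its two spectral sequences, only the balancing of Tor together with two facts: (i) every $\mathrm{Tor}_i^S(R,S/(\tilde W))$ has finite length (from base-point-freeness, via the Koszul side), and (ii) for $1\leq i\leq p$ the module $\mathrm{Tor}_i^S(R,S/(\tilde W))$ is a subquotient of $\bar S(-i-1)^{b_i}$ with $\bar S=S/(\tilde W)$, hence concentrated in degrees $\geq i+1$ (from the linear strand). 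These two facts do not imply $(R/(W))_2=0$. Indeed, in internal degree $2$ the complex $F_\bullet\otimes\bar S$ has nonzero terms only in homological degrees $0$ and $1$ for any non-degenerate $X$ with minimal resolution, regardless of whether $N_{2,p}$ holds, so the hypothesis $N_{2,p}$ can only enter through a mechanism that links different internal degrees of the various $\mathrm{Tor}_i$ (this is where the finite-length property and the codimension bound $c\leq p$ must interact, e.g.\ via the induction on the number of linear forms and the regularity-of-Tor estimates in the EHU paper). You have flagged exactly this step as ``bookkeeping'' and as ``the main obstacle,'' but it is the entire mathematical content of the implication; as written, the proof of $(1)\Rightarrow(2)$ is not complete. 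The quickest repair is to do what the paper does and invoke \cite{EHU} directly; otherwise you need to supply the argument that actually uses the linearity of the first $p$ steps, for instance along the lines of the ``restricting linear syzygies'' results of \cite{EGHPMR2188445}.
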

\begin{proof}
Corollary 5.2 in Eisenbud, Huneke, and Ulrich~\cite{EHU} establishes that $(1)\implies (2)$.
For $(2)\implies (3)$ assume $(2)$ and let $L$ be a linear subspace of dimension $\leq p$ for which $Y:=L\cap X$ is finite. Let $W$ be the vector space of forms defining $L$, which has dimension $n+1-p$, and let $\ell$ be a general linear form. Since $Y=L\cap X$ is finite, the linear series $W':=W+\langle \ell\rangle$ is base-point-free on $X$ and therefore $(W')$ contains $\mm^2$. In other words, the Hilbert function of the coordinate ring $R/(W')$ is $0$ beginning in degree $2$. We denote the Hilbert function of a graded ring $A$ by $\hf(A,t)$. Since $Y$ is finite, it is an arithmetically Cohen-Macaulay scheme and thus we obtain the equality  $\hf(R/(W'),t)=\hf(R/W,t) - \hf(R/W,t-1)$ of Hilbert functions. Inductively, starting with $t=2$, we conclude that $\hf(R/W,t) = \hf(R/W,1)$, so the degree of $Y$ equals $\hf(R/(W),1)$. This is precisely one more than the dimension of the projective span of $Y$, proving $(3)$.  
If $\reg(X)=2$, then by definition of regularity, the scheme $X$ satisfies property $N_{2,p}$ for all $p\geq 1$. If $X$ satisfies property $(3)$ for all $p\geq 0$, then $X$ is called a small scheme and $\reg(X)=2$ by the Main Theorem in Eisenbud-Green-Hulek-Popescu~\cite{EGHP}.
\end{proof}

\begin{cor}\label{Thm:JumpNumber}
Let $X\subseteq \PP^n$ be a non-degenerate closed reduced scheme. If $X$ satisfies property $N_{2,p}$, then the Hankel index $\eta(X)$ is at least $p+1$. 
\end{cor}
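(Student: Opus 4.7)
The corollary is essentially immediate from the two results stated just before it. The plan is to simply chain the implications: Theorem~\ref{thm:pBp} shows that $N_{2,p}$ implies the $p$-base-point-free property, and Theorem~\ref{Thm:JumpNumberpBp} shows that the $p$-base-point-free property forces $\eta(X) \geq p+1$. Composing these two implications gives the statement.

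More concretely, I would write: assume $X$ satisfies $N_{2,p}$. By the implication $(1)\Rightarrow(2)$ in Theorem~\ref{thm:pBp}, $X$ satisfies the $p$-base-point-free property, i.e., every base-point-free linear series $W\subseteq R_1$ of codimension at most $p$ has the property that the ideal it generates in $R$ contains $R_2$. Applying Theorem~\ref{Thm:JumpNumberpBp} directly to this hypothesis yields $\eta(X)\geq p+1$.

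Since both inputs are already established in the excerpt, there is no substantive obstacle — the only thing to check is that the hypotheses line up: the corollary assumes $X$ is non-degenerate closed reduced (which is exactly the setting of Theorem~\ref{thm:pBp}), and Theorem~\ref{Thm:JumpNumberpBp} only requires $X$ to be a real non-degenerate closed subscheme, so it applies. Thus the corollary is genuinely a one-line consequence and warrants no more than a two-sentence proof citing the two previous results.
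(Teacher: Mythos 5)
Your proposal is correct and matches the paper's own proof exactly: the paper's argument is precisely to combine Theorem~\ref{thm:pBp} (implication $(1)\Rightarrow(2)$) with Theorem~\ref{Thm:JumpNumberpBp}. Nothing further is needed.
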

\begin{proof} Combine Theorem \ref{Thm:JumpNumberpBp} and Theorem~\ref{thm:pBp}.
\end{proof}

\begin{corollary}\label{cor:reg}
Let $X\subseteq \PP^n$ be a non-degenerate reduced scheme of regularity $2$. All extreme rays of the Hankel spectrahedron $\Sigma_X^\ast$ of $X$ have rank $1$, in particular $\Sigma_X^*=P_X^*$.
\end{corollary}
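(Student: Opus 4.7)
The plan is to deduce this as a nearly direct consequence of Corollary~\ref{Thm:JumpNumber}, so the work consists in unpacking the definitions and then closing the loop from ``all extreme rays have rank~1'' to the identity $\Sigma_X^*=P_X^*$.

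First I would note that $\reg(X)=2$ means, by definition, $t_i(I)\leq 2+i$ for all $i\geq 0$, which is precisely the statement that $X$ satisfies property $N_{2,p}$ for every integer $p\geq 1$. Applying Corollary~\ref{Thm:JumpNumber} for each $p$ then yields $\eta(X)\geq p+1$ for all $p\geq 1$; by the convention fixed in the definition of the Hankel index this is exactly what we mean by $\eta(X)=\infty$, i.e.\ every extreme ray of $\Sigma_X^*$ has rank~$1$.

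To obtain the identity $\Sigma_X^*=P_X^*$, I would use the discussion preceding Lemma~\ref{Lem:spectrahedral}. Since $\reg(X)=2$ in particular forces $I$ to be generated in degree~2, the scheme $X$ is set-theoretically defined by quadrics, so the rank~$1$ extreme rays of $\Sigma_X^*$ are (up to scaling) exactly the point evaluations $\ev_{[x]}$ for $[x]\in X(\RR)$, and these same evaluations are precisely the extreme rays of $P_X^*$. Thus $\Sigma_X^*$ and $P_X^*$ have identical extreme rays.

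Finally I would invoke that the spectrahedral cone $\Sigma_X^*$ is closed, and that it is pointed (non-degeneracy of $X$ gives $R_1=S_1$, whence $\ell\in\Sigma_X^*\cap(-\Sigma_X^*)$ forces $\ell(x_i^2)=0$ and $\ell(x_ix_j)=0$, so $\ell=0$), so by Minkowski/Krein--Milman for closed pointed convex cones with compact base it is the closed conic hull of its extreme rays. Since the reverse inclusion $P_X^*\subseteq\Sigma_X^*$ always holds (because $\Sigma_X\subseteq P_X$), the two cones coincide. There is no real obstacle here; the only point requiring any care is that the rank one extremes are indexed by $X(\RR)$, which needs the set-theoretic defining-by-quadrics property supplied by regularity~two, and that $\Sigma_X^*$ really is the closed conic hull of its extremes.
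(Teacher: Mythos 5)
Your proposal is correct and follows the paper's own route: regularity $2$ gives property $N_{2,p}$ for all $p\geq 1$, and Corollary~\ref{Thm:JumpNumber} then forces every extreme ray to have rank $1$. The additional material on deducing $\Sigma_X^*=P_X^*$ simply spells out what the paper leaves to the discussion at the start of Section~\ref{SEC: mainThm1} (rank-one extreme rays are point evaluations at $X(\RR)$, which are exactly the extreme rays of $P_X^*$), and it is accurate.
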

\begin{proof} Since $X$ is non-degenerate the condition on the regularity implies that $X$ satisfies $N_{2,p}$ for all $p\geq 0$. Applying Corollary~\ref{Thm:JumpNumber}, we conclude that every non-zero extreme point of $\Sigma_X^*$ has rank $1$.
\end{proof}

In the following section, we show that the equality $\Sigma_X^\ast=P_X^\ast$ implies $2$-regularity for a totally real reduced scheme $X$, which is the converse of Corollary~\ref{cor:reg} for reduced schemes. 
This will also allow us to classify all spectrahedra with only rank $1$ extreme rays.

\section{Hankel spectrahedra detect $2$-regularity}\label{Sec:TRreg2}
The main goal of this section is to prove the following theorem.
\begin{theorem}\label{thm:2regmain}
Let $X\subset \PP^n$ be a closed totally real reduced scheme such that $X\neq\PP^n$. We have $P_X^*=\Sigma_X^*$ if and only if $X$ is $2$-regular.
\end{theorem}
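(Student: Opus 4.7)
The $(\Leftarrow)$ direction is Corollary~\ref{cor:reg}: when $X$ is $2$-regular, all extreme rays of $\Sigma_X^*$ have rank one. Since $2$-regularity forces $X$ to be set-theoretically cut out by its quadrics and $X$ is totally real, these rank-one extreme rays are exactly the point evaluations at $X(\RR)$, which are the extreme rays of $P_X^*$; combined with the containment $\Sigma_X^* \supseteq P_X^*$, this gives $P_X^* = \Sigma_X^*$.

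For the converse I plan to proceed by contrapositive, generalising the irreducible case due to Blekherman--Smith--Velasco \cite{BSV} with the help of the Eisenbud--Green--Hulek--Popescu classification \cite{EGHP} of reduced $2$-regular schemes as linearly joined unions of varieties of minimal degree. Assume $X$ is not $2$-regular and write $X = X_1 \cup \cdots \cup X_r$ for the decomposition into irreducible components, each totally real by total reality of $X$. One of two things must fail: either (a) some $X_i$ is not of minimal degree, or (b) every $X_i$ is of minimal degree but the $X_i$ are not linearly joined. In case (a), \cite{BSV} provides a quadric $f_i \in P_{X_i} \setminus \Sigma_{X_i}$; I would lift $f_i$ to some $\tilde f \in P_X$ by adding an element of $I(X_i)_2 \subseteq R(X)_2$ that is sufficiently positive on every other component $X_j$ to enforce nonnegativity on $X(\RR)$, and then observe that restriction to $X_i$ shows $\tilde f \notin \Sigma_X$, hence $P_X \neq \Sigma_X$. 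In case (b), the failure of linear joining exhibits a finite subscheme of $X$ lying in a linear space that is linearly dependent; this is a failure of the $p$-small property of Theorem~\ref{thm:pBp} and can be converted, via the base-point-free-series construction from the proof of Theorem~\ref{Thm:JumpNumberpBp}, into an extreme ray of $\Sigma_X^*$ of rank greater than one which therefore lies outside $P_X^*$. The assumption $X \neq \PP^n$ rules out the trivial case in which the characterisations of Theorem~\ref{thm:pBp} become vacuous.

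The main obstacle is the lifting in case (a): the correction term in $I(X_i)_2$ must simultaneously dominate $-f_i$ on every other $X_j$ without creating new negativity along the intersections $X_i \cap X_j$. Total reality and the spectrahedral rigidity inherited from the hypothesis $\Sigma_X^* = P_X^*$ (in the contrapositive setup) are the key technical tools: the former provides the real-point density needed to enforce pointwise positivity on each $X_j$, while the latter can be leveraged to conclude that any putative obstruction to the lift must itself produce a PSD relation of the kind already handled by case (b). In effect, cases (a) and (b) share a common core, namely the extreme-ray analysis relating base-point-free linear series and ranks of PSD functionals developed in Section~\ref{subsec:HIndex}; once this common core is isolated, the combination of \cite{BSV} and \cite{EGHP} closes the argument.
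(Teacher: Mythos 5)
Your forward direction is exactly the paper's (Corollary~\ref{cor:reg} plus closedness of the cones), so that part is fine. The converse, however, has two genuine gaps, and the paper's own proof is structured precisely to avoid them.

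First, the lifting in your case (a) is not a routine positivity argument. Any correction term $Q\in I(X_i)_2$ vanishes on $X_i$, hence on $(X_i\cap X_j)(\RR)$ and possibly on a larger subset of $X_j$; so $Q$ cannot be bounded below by a positive constant on $X_j(\RR)\setminus X_i(\RR)$, and near a common zero of $F$ and $Q$ on $X_j$ the inequality $F+\lambda Q\geq 0$ requires a comparison of vanishing orders that can fail. Making this work would require control over how the components meet along $\langle X_i\rangle$ --- but that control is exactly the conclusion you are trying to reach, not something available in the contrapositive setup. Second, and more seriously, in case (b) you propose to convert a failure of $p$-smallness into an element of $\Sigma_X^*\setminus P_X^*$. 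Theorem~\ref{Thm:JumpNumberpBp} only gives an \emph{obstruction} to low-rank extreme rays; the actual construction of a separating functional is Theorem~\ref{Thm: deathRay}, and it requires the witnessing zero-dimensional scheme $\Gamma=W\cap X$ to contain enough \emph{real} reduced points (see the remark following that theorem). A failure of smallness for a complex reduced scheme can be witnessed entirely by non-real points, in which case no such functional is produced. This is not a technicality: it is the reason Section~\ref{SEC:BPF} needs extra arithmetic hypotheses (e.g.\ totally real gonality) to get equality of the Hankel and Green--Lazarsfeld indices, and the reason the converse of Theorem~\ref{thm:2regmain} cannot be proved by the route ``not $2$-regular $\Rightarrow$ not small $\Rightarrow$ exhibit a bad functional.''

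The paper instead argues in the opposite direction: assuming $\Sigma_X^*=P_X^*$, it shows (Lemma~\ref{lem:HSandP}) that this property is inherited by hyperplane sections and by projections from real points, that the real locus is cut out by quadrics, and that real lines meet $X$ in schemes of length at most two; an induction on $n$ and on the number of components (Lemmas~\ref{lem:lindep}--\ref{lem:Irreducible}, Theorem~\ref{thm:psdsossmall}) then forces $X$ to be a linearly joined union of varieties of minimal degree, and $2$-regularity follows from \cite{EGHP}. If you want to salvage your plan, you would need to supply either a lifting lemma for case (a) that works without prior knowledge of the intersection pattern, or a proof that non-smallness of a totally real scheme is always witnessed by real points --- neither of which is available.
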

From this statement, we will deduce the classification of spectrahedral cones with only rank $1$ extreme rays in Subsection~\ref{ssec:rank1}.

One direction was already shown in Corollary~\ref{cor:reg} and it remains to show that the equality of the dual cones implies $2$-regularity. Before we begin the proof of Theorem~\ref{thm:2regmain} we observe the following immediate corollary, which is a generalization of the main result of \cite{BSV} from varieties to reduced schemes.

\begin{cor}
Let $X\subset \PP^n$ be a closed totally real reduced scheme such that $X\neq\PP^n$. Every non-negative quadric on $X$ is a sum of squares in $\RR[X]$ if and only if $X$ is $2$-regular.
\end{cor}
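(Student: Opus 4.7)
The corollary is an immediate reformulation of Theorem~\ref{thm:2regmain} via convex duality. The plan is to observe that the hypothesis ``every nonnegative quadric on $X$ is a sum of squares in $\RR[X]$'' is literally the statement $P_X=\Sigma_X$, and then transfer this equality to the dual cones.

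First I would note that both $P_X$ and $\Sigma_X$ live in the finite-dimensional real vector space $R_2$, and that they are closed convex cones. Closedness of $P_X$ is obvious since it is defined by the nonnegativity conditions $\ev_x\geq 0$ as $x$ ranges over $X(\RR)$, an intersection of closed half-spaces. Closedness of $\Sigma_X$ is standard: by Carathéodory's theorem in $R_2$, every element of $\Sigma_X$ is a sum of at most $N:=\dim R_2$ squares of elements of $R_1$, so $\Sigma_X$ is the image of $R_1^N$ under the continuous map $(\ell_1,\dots,\ell_N)\mapsto \sum \ell_i^2$, and the usual compactness/normalization argument shows this image is closed.

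Given closedness, the bipolar theorem yields $P_X^{\ast\ast}=P_X$ and $\Sigma_X^{\ast\ast}=\Sigma_X$. Hence $P_X=\Sigma_X$ if and only if $P_X^\ast=\Sigma_X^\ast$. Applying Theorem~\ref{thm:2regmain}, this equality of dual cones is equivalent to $X$ being $2$-regular, which establishes the corollary.

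I do not expect any real obstacle here: the entire content is bundled into Theorem~\ref{thm:2regmain}. The only subtlety is citing closedness of $\Sigma_X$ correctly, which is already a well-known fact in the convex algebraic geometry literature (for instance in \cite{BPTMR3075433}), so a brief remark or citation suffices rather than a full argument.
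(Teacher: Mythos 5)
Your proposal is correct and matches the paper's own proof: the paper likewise deduces the corollary immediately from Theorem~\ref{thm:2regmain} using the closedness of $P_X$ and $\Sigma_X$ (which, as you note, relies on $X$ being a totally real reduced scheme) together with biduality. Your added detail on why the cones are closed is accurate but not a different route.
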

\begin{proof}
This immediately follows from Theorem~\ref{thm:2regmain} since the cones $P_X$ are $\Sigma_X$ are closed when $X$ is a totally real reduced scheme.
\end{proof}

For the rest of the section, let $X\subset\PP^n$ be a closed reduced scheme with defining ideal $I \subsetneq S$. 

We begin the proof of Theorem~\ref{thm:2regmain} by proving that if $\Sigma_X^*=P_X^*$, then this property is preserved by taking both hyperplane sections and linear projections. Projection away from a point $p\in \PP^n(\RR)$ determines a rational map $\pi_p: \PP^n\dashedrightarrow \PP^{n-1}$. This map extends to a morphism from the blow-up of $\PP^n$ at $p$, which we denote by $\hat{\pi}_p: {\rm Bl}_{p}(\PP^n)\rightarrow \PP^{n-1}$. Let $\pi_p(X)$ be the image  in $\PP^{n-1}$ via $\hat{\pi}_p$ of the strict transform of $X$ in ${\rm Bl}_{p}(\PP^n)$, which is equal to the Zariski closure of the image of $X$ under $\pi_p$. If $W\subseteq \CC^{n+1}$ is a real subspace such that $\CC^{n+1}=W\oplus \langle p\rangle$ is a direct sum decomposition with projection $\overline{\pi}: \CC^{n+1}\rightarrow W$, we can identify the image space $\PP^{n-1}$ of $\pi_p$ with $\PP(W)$. This identification gives an inclusion of the coordinate ring $S'$ of $\PP(W)$ into $S$ and the ideal of definition of $Y=\pi_p(X)$ in $\PP(W)$ is given by $I(Y)=I(X)\cap S'.$ In particular, there is an inclusion $\RR[Y]\rightarrow \RR[X]$, so the restriction map $S_2^*\rightarrow (S'_2)^*$ takes forms which annihilate $I(X)_2$ to forms which annihilate $I(Y)_2$. A point evaluation $\ev_{p'}\in P_X^*$ with $p'\neq p$ is mapped via this restriction to the point evaluation $\ev_{\pi_p(p')}\in P_Y^*$ and $\ev_p$ is mapped to zero.

\begin{lemma}\label{lem:HSandP}
Let $X\subset\PP^n$ be a closed reduced totally real scheme and suppose $\Sigma_X^*=P_X^*$. Then the following statements hold.
\begin{enumerate} 
\item The real locus of $X$ is cut out by quadrics, i.e.~$X(\RR)= \VV(I(X)_2)(\RR)$
\item For any real line $L\subseteq \PP^n$, either $X$ contains $L$ or $X\cap L$ is a zero-dimensional scheme of length at most two.
\item Let $h\in S_1$ be a nonzero linear form and let $Y\subseteq \PP^n$ be the reduced scheme supported on $X\cap \VV(h)$. Then the equality $\Sigma_Y^*=P_Y^*$ holds.
\item Suppose $X(\RR)$ is non-degenerate and pick $p\in X(\RR)$. Let $Y:=\hat{\pi}_{p}(X)\subseteq \PP^{n-1}$ be the image of the projection away from $p$. Then the equality $\Sigma_Y^*=P_Y^*$ holds. Moreover, if $p$ is a regular real point of $X$, then every point in $Y(\RR)$ has a preimage in ${\rm Bl}_{p}(X)(\RR)$. 
\end{enumerate} 
\end{lemma}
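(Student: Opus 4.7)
The overall plan is to prove the four assertions in order, each time leveraging $\Sigma_X^*=P_X^*$ together with the characterization (cited from \cite[Lemma~4.18]{BPTMR3075433}) that the extreme rays of $P_X^*$ are exactly the point evaluations $\ev_q$ with $q\in X(\RR)$. For (1), the inclusion $X(\RR)\subseteq\VV(I(X)_2)(\RR)$ is immediate. Conversely, if $p\in\VV(I(X)_2)(\RR)$, then $\ev_p$ is a rank one element of $S_+\cap I(X)_2^\perp=\Sigma_X^*=P_X^*$; being rank one, it spans an extreme ray of $S_+$ and hence of $P_X^*$, so it is proportional to $\ev_q$ for some $q\in X(\RR)$, forcing $p=q\in X(\RR)$. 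For (2), if $L$ is a real line with $L\not\subseteq X$, Zariski density of $L(\RR)$ in $L$ combined with $X$ being closed rules out $L(\RR)\subseteq X(\RR)$; by (1), some $q\in I(X)_2$ then restricts to a nonzero binary quadric on $L\cong\PP^1$, so $X\cap L$, as a subscheme of $L$, is contained in its length-two zero scheme.

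For (3), take $\ell\in\Sigma_Y^*$. The inclusion $I(X)_2\subseteq I(Y)_2$ gives $\ell\in\Sigma_X^*=P_X^*$, so $\ell=\sum_i c_i\ev_{q_i}$ with $c_i>0$ and $q_i\in X(\RR)$. Because $h^2\in I(Y)_2$, we obtain $0=\ell(h^2)=\sum_i c_i h(q_i)^2$, forcing $h(q_i)=0$ for every $i$ and hence $q_i\in X(\RR)\cap\VV(h)(\RR)=Y(\RR)$. Thus $\ell\in P_Y^*$, and combined with the general inclusion $P_Y^*\subseteq\Sigma_Y^*$ this yields equality.

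For (4), the strategy is to lift each $\ell'\in\Sigma_Y^*$ to some $\tilde\ell\in\Sigma_X^*$ along the restriction $\rho\colon S_2^*\twoheadrightarrow (S_2')^*$ dual to $S_2'\hookrightarrow S_2$. Once such a lift is produced, the identity $\Sigma_X^*=P_X^*$ expresses $\tilde\ell=\sum_i c_i\ev_{q_i}$ with $q_i\in X(\RR)$; since $\rho(\ev_p)=0$ and $\rho(\ev_{q_i})=\ev_{\pi_p(q_i)}$ for $q_i\neq p$, we conclude $\ell'=\rho(\tilde\ell)\in P_Y^*$. The heart of the argument is the PSD lift: in coordinates where $p$ is the coordinate point dual to $x_p$ and $S_1'$ consists of linear forms vanishing at $p$, the hypothesis $p\in X(\RR)$ gives $f(p)=0$ for every $f\in I(X)_2$, so $I(X)_2\subseteq S_2'\oplus x_p S_1'$. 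The annihilation condition therefore imposes linear constraints only on the cross-terms $\tilde\ell(x_p g)$, and these are consistent because $\ell'$ annihilates $I(X)_2\cap S_2'=I(Y)_2$; with $\tilde\ell(x_p^2)$ left free, a Schur-complement argument using a sufficiently large diagonal entry furnishes positive semidefiniteness of the augmented Gram matrix. Verifying that the prescribed cross-term row lies in the range of the Gram matrix of $\ell'$ is the main obstacle. For the \emph{moreover} clause, take $y\in Y(\RR)$ and let $L$ be the real line through $p$ and $y$. By (2), either $L\subseteq X$ (so every real point of $L\setminus\{p\}$ is a real preimage of $y$) or $L\cap X$ has length at most two; in the latter case, since $p\in L\cap X$, the residual intersection has length at most one and must be real, for a complex residual would contribute its conjugate and exceed length two. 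Hence either some real $q\in X(\RR)\setminus\{p\}$ maps to $y$, or $L$ is tangent to $X$ at $p$ in the direction $y$, placing $y\in\PP(T_pX)(\RR)$ as a real point of the exceptional divisor $\PP(T_pX)$ of $\mathrm{Bl}_p(X)$.
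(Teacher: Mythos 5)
Parts (1)--(3) are correct: (1) and (2) match the paper up to cosmetic differences (you use the extreme-ray characterization of $P_X^*$ where the paper separates $\alpha$ from the compact set $X(\RR)$ by a quadric, but both work), and (3) is the paper's argument verbatim. The ``moreover'' clause of (4) also matches the paper's conjugation argument on the line $L$. The problem is the main claim of (4). Your plan is to lift an \emph{arbitrary} $\ell'\in\Sigma_Y^*$ to a positive semidefinite $\tilde\ell$ annihilating $I(X)_2$, and you correctly identify where this breaks: when $B_{\ell'}$ is singular, the augmented Gram matrix $\left(\begin{smallmatrix} B_{\ell'} & v \\ v^T & c\end{smallmatrix}\right)$ is positive semidefinite for some $c$ only if the cross-term vector $v$ lies in the range of $B_{\ell'}$, while the annihilation conditions prescribe the values of $v$ on the subspace $\{g_f : f = f' + x_p g_f \in I(X)_2\}$ of $S_1'$. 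Nothing in your argument shows these prescriptions are compatible with $v\in\mathrm{range}(B_{\ell'})$; you name this ``the main obstacle'' and then do not resolve it. As written, this is a genuine gap, not a routine verification: a priori some $f\in I(X)_2$ could have $g_f\in\ker B_{\ell'}$ but $\ell'(f')\neq 0$, and ruling this out seems to require information you do not have at this stage.

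The paper sidesteps the issue entirely, and this is also where the non-degeneracy hypothesis (which your argument never uses --- a warning sign) does its work. One lifts only the \emph{positive definite} elements $\ell'$ of $\Sigma_Y^*$: take any linear extension $\ell$ of $\ell'$ killing $I(X)_2$ (possible since $I(Y)_2 = I(X)_2\cap S_2'$), and replace it by $\ell + a\ev_p$ for $a\gg 0$; since $\ev_p$ kills $I(X)_2$ and vanishes on $S_1'\cdot S_1'$, this changes only the corner entry $c$, and the Schur complement test needs no range condition because $B_{\ell'}$ is invertible. Writing the resulting element of $\Sigma_X^*=P_X^*$ as $\sum c_i\ev_{p_i}$ and restricting to $S_2'$ (which kills $\ev_p$) gives $\ell'\in P_Y^*$. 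Then one observes that $Y(\RR)$ is non-degenerate because $X(\RR)$ is, so $\Sigma_Y^*$ contains a positive definite form $\mu_0$; hence $\ell'+\epsilon\mu_0$ is positive definite for every $\ell'\in\Sigma_Y^*$ and $\epsilon>0$, the positive definite forms are dense in $\Sigma_Y^*$, and closedness of $P_Y^*$ yields $\Sigma_Y^*\subseteq P_Y^*$. You should either adopt this restriction-plus-density argument or supply a proof that the prescribed cross-terms can always be chosen in $\mathrm{range}(B_{\ell'})$; the latter is not established by what you wrote.
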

\begin{proof}
$(1)$ Suppose there is a real point $[\alpha]\in \VV(I(X)_2)\setminus X$, then the linear functional $\ev_\alpha$ lies in the cone $\Sigma_X^*$. It is not an element of $P_X^*$, because we can separate $\alpha$ from the compact set $X(\RR)$ by a quadric. 
$(2)$ Restricting $I(X)_2$ to a line $L$ and using part $(1)$ we see that either $L(\RR)\subseteq X(\RR)$ and thus $L\subseteq X$ or the intersection has at least a nonzero quadric in its ideal of definition in $\RR[L]$ and thus has length at most two. $(3)$ If $\ell\in S_2^*$ belongs to $\Sigma_Y^*$, the bilinear form $B_\ell\colon R_1\times R_1 \to \RR$, $(f,g)\mapsto \ell(fg)$, is positive semidefinite and $\ell$ annihilates the quadratic part of the ideal of definition of $Y$. In particular, $\ell$ annihilates $(I(X))_2$ and thus $\ell\in \Sigma_X^*$. Because $\Sigma_X^*=P_X^*$, there exist real numbers $c_i\geq 0$ and points $p_i\in X(\RR)$ such that $\ell=c_1\ev_{p_1}+\dots +c_s\ev_{p_s}$. Since $\ell(h^2)=0$, we conclude that $p_i\in Y(\RR)$ and $\Sigma_Y^*=P_Y^*$. 

$(4)$ We use the notation fixed in the paragraph before the statement of the lemma. Since the coordinate ring $\RR[Y]$ is contained in $\RR[X]$, every linear form $\ell' \in S'$ which annihilates $I(Y)_2$ can be extended to a linear form $\ell\in S$ which annihilates $I(X)_2$. If $B_{\ell'}$ is positive definite and $\ell$ is any such extension, then the form $\ell + a\ev_{p}$ annihilates $I(X)_2$ for any real number $a$. Moreover, the associated bilinear form is positive definite for all sufficiently large $a$. We choose $\mu$ to be one of these positive definite extensions. Since $\Sigma_X^*=P_X^*$, there exist real numbers $c_i$ and real points $p_1,\dots,p_m\in X(\RR)$ such that $\mu=c_1\ev_{p_1}+\dots + c_m\ev_{p_m}$. Because the restriction map takes $\ev_p$ to zero, we conclude that $\mu\in P_Y^*$. So every positive definite form in $\Sigma_Y^*$ belongs to $P_Y^*$. The set $Y(\RR)$ is non-degenerate because $X(\RR)$ is non-degenerate and thus the cone $\Sigma_Y^*$ contains at least one, and therefore a dense set, of positive definite forms. We conclude that $\Sigma_Y^*\subseteq P_Y^*$. The opposite inclusion is immediate, because convex duality reverses inclusion. This proves the first part of the claim. 

Now assume $p\in X(\RR)$ is a regular point on $X$. We will show that every real point $y\in \pi_p(X)$ has a real preimage in ${\rm Bl}_{p}(X)$. In case $y$ has a preimage in $X\setminus\{p\}$, consider the real line $L\subset\PP^n$ spanned by $p$ and $y$, which is real. So by part $(1)$ either $L\subseteq X$ or $L\cap X$ is a scheme of length at most two containing $p$ and some other point $x\in X$. In the first case, any point of $L(\RR)$ is a preimage for $y$ in $X(\RR)$. In the second case, the point $x$ must be real because otherwise its conjugate $\overline{x}$ also belongs to $L$ proving that $L$ has length at least three, a contradiction. If a real point $y\in \PP^{n-1}=\PP(W)$ is in the image of the exceptional divisor of the blow-up, then $y$ is the tangent direction of a real tangent line $L$ to $p$. Because $p$ is non-singular the direction of $L$ at $p$ is a limit of directions of real lines intersecting $X$ at $p$ and at another real point in $X\setminus \{p\}$
and therefore the tangent direction of $L$ at $p$ corresponds to a real point of ${\rm Bl}_{p}(X)$, as claimed.
\end{proof}

\begin{lemma}\label{lem:lindep}
Let $X\subset\PP^n$ be a closed reduced scheme and suppose $\Sigma_X^\ast = P_X^\ast$. If $X\subseteq \PP^n$ contains a set $B\subseteq X(\RR)$ of real points each of which is an irreducible component of $X$, then we have $\langle B \rangle \cap X = B$. In particular, if $B$ spans $\PP^n$, then we have $X=B$.
\end{lemma}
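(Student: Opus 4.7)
The plan is to argue by induction on $|B|$: assume for contradiction there is some $q \in (\langle B\rangle \cap X)\setminus B$ and derive a contradiction. I work in the totally-real setting of this section, so that all parts of Lemma~\ref{lem:HSandP} are available.

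Base case $|B|=2$. Let $L=\langle b_1,b_2\rangle$. If $q$ is not real, then $\bar q,b_1,b_2,q$ are four distinct points of $L\cap X$; since the space of quadrics on $L\cong\PP^1$ is three-dimensional, every quadric in $I(X)_2$ vanishes identically on $L$, so $L\subseteq\VV(I(X)_2)$. Lemma~\ref{lem:HSandP}(1) then yields $L(\RR)\subseteq X(\RR)$, and closedness of $X$ forces $L\subseteq X$, contradicting that $\{b_1\}$ is an irreducible component of $X$. If $q$ is real, write $q=\lambda_1 b_1+\lambda_2 b_2$ with $\lambda_i\neq 0$ and exploit the polarization identity
\begin{equation*}
(\lambda_1 b_1-\lambda_2 b_2)(\lambda_1 b_1-\lambda_2 b_2)^T \;=\; 2\lambda_1^2\, b_1 b_1^T + 2\lambda_2^2\, b_2 b_2^T - q q^T.
\end{equation*}
The right-hand side, read as a functional on $S_2$, is rank-one positive semidefinite and annihilates $I(X)_2$, so it lies in $\Sigma_X^*=P_X^*$; this forces the fourth point $q':=\lambda_1 b_1-\lambda_2 b_2$ into $X(\RR)$, and the previous argument applies to the four points $b_1,b_2,q,q'\in L\cap X$.

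Inductive step $|B|=m\geq 3$. If $q$ already lies in $\langle B'\rangle$ for some proper subset $B'\subsetneq B$, the induction hypothesis applied to $B'$ immediately gives $q\in B'\subseteq B$. Otherwise, fix any $b_1\in B$; it is a smooth (zero-dimensional) real point of $X$, so Lemma~\ref{lem:HSandP}(4) guarantees that the projection $Y:=\hat\pi_{b_1}(X)\subseteq\PP^{n-1}$ also satisfies $\Sigma_Y^*=P_Y^*$. The induction hypothesis applied to each pair $\{b_1,b_i\}$ (a set of size $2<m$) tells us that $\langle b_1,b_i\rangle\cap X=\{b_1,b_i\}$, so the projected points $\pi(b_2),\ldots,\pi(b_m)$ are distinct and each remains an isolated component of $Y$. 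Since $q\neq b_1$, the image $\pi(q)$ is a well-defined point of $Y$ lying in $\langle\pi(B\setminus\{b_1\})\rangle$, and the induction hypothesis applied to $Y$ and $\pi(B\setminus\{b_1\})$ (a set of size $m-1$) forces $\pi(q)=\pi(b_i)$ for some $i$. This means $q\in\langle b_1,b_i\rangle$, and one final application of the induction hypothesis to the pair $\{b_1,b_i\}$ gives $q\in\{b_1,b_i\}\subseteq B$, the required contradiction. The ``in particular'' clause is the special case $\langle B\rangle=\PP^n$.

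The main obstacle lies in the inductive step, where one must transport the hypothesis $\Sigma_X^*=P_X^*$ through projection (which is precisely what Lemma~\ref{lem:HSandP}(4) does) and verify that each $\pi(b_i)$ remains an isolated component of $Y$. The latter requires the induction hypothesis on pairs $\{b_1,b_i\}$ to rule out extra real or complex points of $X$ on the lines through $b_1$; once that bookkeeping is clean, the single-step reduction from $m$ to $m-1$ closes the induction without further ado.
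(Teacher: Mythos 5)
Your argument is essentially the paper's: project away from a point of $B$, use Lemma~\ref{lem:HSandP}(4) to transport the hypothesis $\Sigma^\ast=P^\ast$ to the image, check that the remaining points of $B$ stay zero-dimensional components of the image, and close by induction. (The paper inducts on $\dim\langle B\rangle$ rather than on $|B|$ and handles the one-dimensional base case by citing Lemma~\ref{lem:HSandP}(1)--(2); these are cosmetic differences. In fact your entire base case, and also the verification that each $\pi(b_i)$ remains an isolated component of $Y$, follow in one line from Lemma~\ref{lem:HSandP}(2) --- a real line not contained in $X$ meets it in a scheme of length at most two --- so the polarization identity, while correct, is more work than is needed.)

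There is one step you cannot take as written: the conclusion $\Sigma_Y^\ast=P_Y^\ast$ in Lemma~\ref{lem:HSandP}(4) is proved under the hypothesis that $X(\RR)$ is non-degenerate, and nothing in the statement of the present lemma or in your setup guarantees this. If $X(\RR)$ lies in a hyperplane $\{h=0\}$, then $h^2\in I(X)_2$ forces every $\ell\in\Sigma_X^\ast$ to have $h$ in the kernel of $B_\ell$, so $\Sigma_X^\ast$ contains no positive definite form and the density argument in the proof of part (4) collapses; the same problem recurs in every recursive call. The paper's first move is exactly the missing repair: by Lemma~\ref{lem:HSandP}(3) one replaces $X$ by the reduced scheme on $X\cap\langle B\rangle$ inside $\langle B\rangle$ (the points of $B$ remain irreducible components there), after which $B$ spans the ambient space, the real locus is non-degenerate, and non-degeneracy persists under your projections because $\pi(B\setminus\{b_1\})$ spans the target of $\pi_{b_1}$. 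With that reduction inserted at the start, your induction closes.
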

\begin{proof}
By Lemma~\ref{lem:HSandP}$(3)$, we can restrict to the scheme $X\cap\langle B\rangle$ in the subspace $\langle B\rangle$ spanned by $B$. We therefore assume that $B$ spans $\PP^n$ and prove that $X=B$ by induction on $n$. The statement is trivial if $n=0$ and holds for $n=1$ because $X(\RR)$ is cut out by quadrics, see Lemma~\ref{lem:HSandP}$(1)$.
If $n>1$ and $p\in B$, then by Lemma~\ref{lem:HSandP}$(4)$, the projection of $X$ away from $p$ produces a reduced scheme $Y\subseteq \PP^{n-1}$ with $\Sigma_Y^*=P_Y^*$. By Lemma~\ref{lem:HSandP}$(2)$ the projection away from $p$ maps the elements in $B\setminus p$ to a set of $0$-dimensional components of $Y$ which span $\PP^{n-1}$. By induction, $\pi_p(X)$ agrees with $Y$ and thus $X$ is contained in the inverse image of the projection of this set. We conclude that $X=B$ as claimed. 
\end{proof} 

\begin{remark} The assumption that $X$ contains a basis consisting of real points is necessary for the conclusion to hold, even for the case of points. Let $Q=x_0^2+\dots + x_n^2$ and let $X$ be a $0$-dimensional complete intersection defined by a set of quadrics containing $Q$. The scheme $X$ consists of $2^{n-1}$ conjugation invariant pairs of complex points and it properly contains a basis if $n>1$. However, $\Sigma_X^*=P_X^*=0$ because $Q\in I(X)$.\end{remark}

For the rest of this section, let $X = X_1\cup\dots\cup X_r$, where the $X_i$ are the (geometrically) irreducible components of $X$, and let $L_i:=\langle X_i\rangle$ be the span of $X_i$.

\begin{lemma}\label{lem:componentsMinDegree}
Let $X\subset\PP^n$ be a closed reduced totally real scheme and suppose $\Sigma_X^*=P_X^*$. Every irreducible component of $X$ is a variety of minimal degree in its span. Moreover, the span of any irreducible component $X_i\subset X$ intersects $X$ only in $X_i$, i.e.~$\left(X\cap L_i\right)=X_i$.
\end{lemma}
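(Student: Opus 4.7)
My plan is to prove the lemma by induction on the ambient dimension $n$. The cases $n\leq 1$ are immediate: by Lemma~\ref{lem:HSandP}(1), $X$ is then a finite set of real points, so each component is a point, which is trivially a variety of minimal degree in its span. For the inductive step, fix a component $X_i$ and split according to whether $L_i$ is a proper linear subspace of $\PP^n$.

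If $L_i\subsetneq\PP^n$, I would iteratively apply Lemma~\ref{lem:HSandP}(3) to a chain of hyperplanes cutting down to $L_i$, producing $Y:=(X\cap L_i)_{\mathrm{red}}\subseteq L_i$ with $\Sigma_Y^\ast=P_Y^\ast$ and $Y$ still totally real (total reality survives because $X(\RR)$ surjects onto a dense subset of $Y$). If $Y=L_i$, then $L_i\subseteq X$ together with irreducibility of components forces $X_i=L_i$, and both claims are immediate. Otherwise $Y\subsetneq L_i\cong\PP^{n'}$ with $n'<n$, and $X_i$ is an irreducible component of $Y$, non-degenerate in $L_i$. Applying the inductive hypothesis to $Y\subseteq L_i$ yields that $X_i$ is a variety of minimal degree in $L_i$ and that $L_i\cap Y=X_i$, hence $X\cap L_i=X_i$.

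If $L_i=\PP^n$, the goal is to prove $X=X_i$ outright; once this is done, $X$ is irreducible, totally real and non-degenerate with $\Sigma_X=P_X$, and the theorem of Blekherman--Smith--Velasco \cite{BSV} forces it to be of minimal degree. Pick a smooth real point $p\in X_i$ lying on no other component, which exists because $X_i(\RR)$ is Zariski dense and the loci $X_i\cap X_k$ for $k\neq i$ are proper subvarieties of $X_i$. When $X_i$ is a hypersurface, Lemma~\ref{lem:HSandP}(2) applied to lines through $p$ forces $\deg X_i\leq 2$, so $X_i$ is a quadric; otherwise $\dim X_i\leq n-2$ and I project: by Lemma~\ref{lem:HSandP}(4), $Y:=\hat\pi_p(X)\subseteq\PP^{n-1}$ is totally real with $\Sigma_Y^\ast=P_Y^\ast$, and $\hat\pi_p(X_i)$ is a non-degenerate component of $Y$ with $\dim Y<n-1$, so the inductive hypothesis gives $Y=\hat\pi_p(X_i)$. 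Now assume for contradiction that some $q\in X_j(\RR)\setminus X_i$ exists with $j\neq i$: either $X_i$ is a quadric and $pq\cap X_i$ already has length $2$, or $\hat\pi_p(q)\in\hat\pi_p(X_i)$ forces $pq$ to be a secant or a tangent of $X_i$ at $p$, which again produces length at least $2$ in $pq\cap X_i$. Adding $q$, the intersection $pq\cap X$ has length at least $3$, contradicting Lemma~\ref{lem:HSandP}(2); the choice of $p$ outside all other components rules out the line-contained case $pq\subseteq X$. Hence $X=X_i$.

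The main obstacle is the case $L_i=\PP^n$. The delicate point is showing that $\hat\pi_p(q)\in\hat\pi_p(X_i)$ genuinely forces the line $pq$ to be either a secant or tangent of $X_i$ at $p$, which requires a careful analysis of the image of the exceptional divisor of the blow-up at $p$ and of how it embeds in $\hat\pi_p(X_i)$, together with the observation that the choice of $p$ outside all other components rules out $pq\subseteq X$. A secondary technical step, used repeatedly and worth recording explicitly, is that totally real descends along both the hyperplane restriction of Lemma~\ref{lem:HSandP}(3) and the projection of Lemma~\ref{lem:HSandP}(4); in each case this follows from density of the real locus and compatibility of the operation with Zariski closure.
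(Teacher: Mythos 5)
Your strategy (induction on the ambient dimension, reduction to the span, projection from a point of $X_i$, and an appeal to \cite{BSV} for the irreducible case) is genuinely different from the paper's. The paper instead takes $U$ to be the span of $\codim_{L_i}(X_i)+1$ general real points of $X_i$, so that $X_i\cap U$ consists of $\deg(X_i)$ points by Bertini, applies Lemma~\ref{lem:HSandP}(3) to get $\Sigma_Y^*=P_Y^*$ for the reduced scheme $Y$ supported on $X\cap U$, and then invokes Lemma~\ref{lem:lindep} to conclude that $Y$ equals the chosen spanning set; this gives $\deg(X_i)=\codim_{L_i}(X_i)+1$ at once, and the second assertion follows from the same kind of zero-dimensional section through a hypothetical extra real point $q$. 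That route needs neither \cite{BSV} nor any inductive hypothesis on linear sections, because Lemma~\ref{lem:lindep} asks only for a spanning set of real isolated points, not total reality of the whole section.

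This is exactly where your proof has genuine gaps. First, in the case $L_i\subsetneq\PP^n$ you assert that $Y=(X\cap L_i)_{\mathrm{red}}$ is still totally real ``because $X(\RR)$ surjects onto a dense subset of $Y$''. This is false for hyperplane sections: $Y(\RR)=X(\RR)\cap L_i$ need not be dense in $X\cap L_i$, since components of $X_j\cap L_i$ for $j\neq i$ can be entirely non-real (compare the Remark following Lemma~\ref{lem:lindep}, where a non-totally-real scheme satisfies $\Sigma^*=P^*$ while violating the conclusion). Since your inductive hypothesis is the lemma itself, which assumes total reality, the induction cannot be applied to $Y$ as written. Second, in the case $L_i=\PP^n$ with $\dim X_i\leq n-2$, the assertion that $\hat\pi_p(X_i)$ is an irreducible component of $Y=\hat\pi_p(X)$ with $\dim Y<n-1$ is unjustified: a priori $\hat\pi_p(X_i)$ could be strictly contained in $\hat\pi_p(X_j)$ for some $j\neq i$ (for instance, if $X_j$ were a hyperplane avoiding $p$, its projection is all of $\PP^{n-1}$), in which case the inductive hypothesis yields nothing and the key deduction $\hat\pi_p(q)\in\hat\pi_p(X_i)$ fails. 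Excluding this requires a separate secant-line argument, which the paper records afterwards as Lemma~\ref{lem:pairwiseInt}(1): a containment $Z_i\subseteq Z_j$ forces the component through the center of projection to be a linear space. By contrast, the tangent/secant analysis you single out as the delicate point is sound and matches the paper's proof of Lemma~\ref{lem:HSandP}(4); the two issues above are the ones that need to be repaired.
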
 
\begin{proof}
Let $U$ be the projective space spanned by a set $B$ consisting of $\codim_{L_i}(X_i)+1$ general real points of $X_i$ and let $Y$ be the reduced scheme supported on $X\cap U$. Each $X_i(\RR)$ is Zariski-dense in $X_i$ by assumption and therefore non-degenerate. It follows that $Y$ contains a $0$-dimensional scheme of $\deg(X_i)$ many points by Bertini's Theorem because we can cover a neighborhood of $U$ in the Grassmannian by moving the chosen generic real points spanning $U$ in $X_i(\RR)$. By Lemma~\ref{lem:HSandP}$(3)$, we know that $\Sigma_Y^*=P_Y^*$. By Lemma~\ref{lem:lindep}, we conclude that $B=Y$ and in particular $\deg(X_i)=\codim_{L_i}(X_i)+1$; so $X_i$ is a variety of minimal degree in its span.

We now show that $(X\cap L_i)(\RR) = X_i(\RR)$, which will imply the claim. 
Suppose there is a real point $q$ in $(X\cap L_i)(\RR)$, which is not in $X_i$. Let $W$ be the projective linear span of $q$ and $\codim_{L_i}(X_i)$ general real points in $X_i$ so that $(X\cap W)(\RR)$ contains a real basis of $W$.
Then $W\cap X$ is $0$-dimensional because the dimension of every irreducible component of $X\cap L_i$ different from $X_i$ is smaller than $\dim(X_i)$ by the argument in the first paragraph of the proof. But $W$ intersects $X_i$ in another real point for degree reasons, which contradicts Lemma \ref{lem:lindep}.

So $(X\cap L_i)(\RR) = X_i(\RR)$, from which we conclude $\Sigma_Y^\ast = P_Y^\ast = P_{X_i}^\ast = \Sigma_{X_i}^\ast$ by Lemma \ref{lem:HSandP}(3). Therefore, the ideals of $X\cap L_i$ and $X_i$ are equal in the degree $2$ part of the coordinate ring of $L_i$. The scheme $X_i$ is defined by quadrics in $L_i$ because it is a variety of minimal degree. Therefore, we obtain $X\cap L_i = X_i$.
\end{proof}

Next we focus on projections of $X$ from a point. Let $p\in X_1(\RR)$ be a generic point, let $Z=\pi_p(X)$ and $Z_i: =\pi_p(X_i)$. The irreducible components of $Z$ are precisely the $Z_i$ which are maximal under inclusion. If $W=\bigcup_{i=1}^r W_i$ is the decomposition of a reduced scheme $W$ into its irreducible components and $1\leq j\leq r$ we let $W_j':=\bigcup_{j\neq i} W_j$.  The following Lemma allows us to understand the intersections of components of $X$ by looking at their projections.

\begin{lemma}\label{lem:pairwiseInt}
Let $X\subset\PP^n$ be a closed totally real reduced scheme and suppose $\Sigma_X^*=P_X^*$. Let $p$ be a generic point of $X_1(\RR)$. The following statements hold:
\begin{enumerate}
\item If $Z_i\subseteq Z_j$ with $i\neq j$, then $i=1$ and $X_1$ is a linear space.
\item The inclusion $ Z_i'\subseteq\pi_p(X_i')$ holds with equality unless $Z_1\subset Z_i$ and $Z_1\not\subset Z_j$ for all $j\neq i$.
\item If for some irreducible component $Z_i$ of $Z$ the intersection $Z_i\cap Z_i'$ is integral and totally real and $\pi_p(X_i')= Z_i'$, then $\pi_p(X_i\cap X_i')=Z_i\cap Z_i'$.
\end{enumerate}
\end{lemma}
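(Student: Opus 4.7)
My plan is to handle all three parts by tracing out lines of $X$ using Lemma~\ref{lem:HSandP}(2) (which bounds the length of $L\cap X$ whenever a line $L$ is not contained in $X$), together with the fact that a generic point $p\in X_1(\RR)$ lies in $X_1$ and in no other component.

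For part~(1), I would start from $Z_i\subseteq Z_j$ with $i\neq j$: for each $q\in X_i$ the containment furnishes $q'\in X_j$ with $p,q,q'$ collinear on a line $L$. For a generic $q$ these three points should be pairwise distinct, since $p=q$ or $p=q'$ would force $p$ into $X_i$ or $X_j$ (violating genericity when the corresponding index differs from $1$), and $q=q'$ for generic $q$ would force $X_i\subseteq X_j$, contradicting the maximality of components. Lemma~\ref{lem:HSandP}(2) then puts $L\subseteq X$, hence $L\subseteq X_k$ for a single component $X_k$; since $p\in X_k$ we must have $k=1$. If $i\neq 1$, the resulting $q\in X_1$ for generic $q\in X_i$ forces $X_i\subseteq X_1$, a contradiction, so $i=1$. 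The same argument shows $L_{pq}\subseteq X_1$ for every generic $q\in X_1$, making $X_1$ a cone with a generic vertex $p$, which is enough to force every point of $X_1$ to be a vertex and hence $X_1$ to be a linear subspace.

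For part~(2), the inclusion $Z_i'\subseteq \pi_p(X_i')=\bigcup_{j\neq i}Z_j$ is immediate from the definition of $Z_i'$ as the union of components of $Z$ different from $Z_i$. Equality fails exactly when some $Z_j$ with $j\neq i$ is not itself a maximal component of $Z$, i.e., is strictly contained in some $Z_l$; by part~(1) this forces $j=1$. Such a $Z_1$ avoids $Z_i'$ precisely when the only component containing $Z_1$ is $Z_i$ itself (otherwise $Z_1\subseteq Z_l\subseteq Z_i'$ and equality is preserved), which recovers the stated exception.

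For part~(3), note first that $p\notin X_i\cap X_i'$: for $i\neq 1$ this follows from $p\notin X_i$, and for $i=1$ from $p\notin X_1'$. Hence $\pi_p$ restricts to a morphism of projective schemes $X_i\cap X_i'\to Z_i\cap Z_i'$ whose image is closed, and by irreducibility of $Z_i\cap Z_i'$ it suffices to produce preimages on a dense subset. For a generic $z\in Z_i\cap Z_i'$ I pick $q\in X_i$ with $\pi_p(q)=z$ and, using the hypothesis $\pi_p(X_i')=Z_i'$, also $q'\in X_i'$ with $\pi_p(q')=z$; if $q=q'$ we are done, and otherwise $p,q,q'$ are pairwise distinct and collinear, so Lemma~\ref{lem:HSandP}(2) places their common line $L$ inside $X$ and the genericity argument of part~(1) forces $L\subseteq X_1$. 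Then $q,q'\in X_1$, and either $q'\in X_1\cap X_1'=X_i\cap X_i'$ (when $i=1$) or $q\in X_i\cap X_1\subseteq X_i\cap X_i'$ (when $i\neq 1$, using $X_1\subseteq X_i'$), producing the required preimage. The main obstacle will be careful bookkeeping of the genericity conditions, particularly verifying that $p,q,q'$ really are pairwise distinct in every subcase of~(1), and, in~(3), using the total-reality of $Z_i\cap Z_i'$ (so that its real points are Zariski dense) to upgrade the generic-$z$ argument to a statement about the full intersection.
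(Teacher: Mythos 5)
Your overall strategy --- reducing each part to the trichotomy of Lemma~\ref{lem:HSandP}(2) for the real line through $p$ and a point of $X$, and exploiting that the generic point $p\in X_1(\RR)$ lies on no component other than $X_1$ --- is exactly the paper's, and part (2) as well as the endgames of (1) and (3) match the printed argument. The genuine gap is in the repeated step ``the containment furnishes $q'\in X_j$ with $\pi_p(q')=\pi_p(q)$'' (and its analogues in part (3)): this is only automatic when $j\neq 1$, where $\pi_p|_{X_j}$ is an honest finite surjection onto $Z_j$. For $j=1$ the relevant component is $Z_1=\hat{\pi}_p(\mathrm{Bl}_p(X_1))$, and a point of $Z_1$ may have \emph{no} preimage in $X_1\setminus\{p\}$: its only preimage can lie on the exceptional divisor, i.e.\ the line in question is tangent to $X_1$ at $p$. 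This case cannot be dismissed by genericity of $q$ (resp.\ of $z$ in part (3)), because all of $Z_i$ (resp.\ of $Z_i\cap Z_i'$) may sit inside the image of the exceptional divisor whenever its dimension is smaller than $\dim Z_1$. Your parenthetical ``violating genericity when the corresponding index differs from $1$'' shows you noticed that $j=1$ is special, but you never actually produce the third collinear point there, so the application of Lemma~\ref{lem:HSandP}(2) is unjustified precisely in the cases $j=1$ in (1), and in (3) when $i=1$ or when the only $X_i'$-preimage of $z$ would have to come from $X_1$.

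The fix, which is how the paper proceeds, is to observe that in the tangential case the line $L$ meets $X_1$ at $p$ in a scheme of length at least $2$, so together with the additional point $q$ the intersection $L\cap X$, if zero-dimensional, would have length at least $3$; Lemma~\ref{lem:HSandP}(2) then still forces $L\subseteq X$, hence $L\subseteq X_1$, and your argument resumes unchanged. With that case added, and the genericity bookkeeping you already flag carried out, the proof goes through and coincides with the paper's.
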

\begin{proof}
$(1)$ Let $x_i\neq p$ be a general point of $X_i(\RR)$. If $j\neq 1$, then there is a point $x_j\in X_j$ with $\pi_p(x_i) = \pi_p(x_j)$ because $Z_i\subset Z_j$ and $\pi_p\vert_{X_j}\colon X_j\to Z_j$ is a morphism. Because $i\neq j$ and $x_i$ is generic, we can assume $x_i\neq x_j$. So by Lemma~\ref{lem:HSandP}$(2)$, we conclude that the real line $\Lambda$ spanned by $p$ and $x_i$ must be contained in $X$. Since $X_1$ is the only irreducible component of $X$ containing $p$, we must have $\Lambda\subset X_1$. But $x_i\in\Lambda$ is generic on $X_i$, which implies $X_i\subset X_1$, i.e.~$i=1$. Both points $x_i$ and $p$ are generic in $X_1$ and their span is in $X_1$, so $X_1$ is a linear space.
We will now show that the case $j=1$ cannot occur by contradiction. If $j$ were $1$, then either there exists $x_1$ with $\pi_p(x_i) = \pi_p(x_1)$ or there is a tangent line to $X_1$ at $p$, which maps to $\pi_p(x_i)$ under $\hat{\pi}_p\colon {\rm Bl}_p(X) \to Z$. In the first case, we find $i=j=1$ by the same argument as before, which contradicts $i\neq j$. In the latter case, the line spanned by $x_i$ and $p$ is the tangent line $L$ to $X_1$ at $p$, so if $L\cap X$ were $0$-dimensional, it would have length at least $3$, which contradicts Lemma \ref{lem:HSandP}$(2)$. So as before, we conclude $i=j=1$, which is a contradiction.

$(2)$ By definition, we have $\pi_p(X_i')=\bigcup_{j\neq i}\pi_p(X_j)=\bigcup_{j\neq i}Z_j$ while $Z_i'=\bigcup_{j\neq i: Z_j\text{ is maximal}} Z_j$. Suppose $Z_i'\subsetneq \bigcup_{j\neq i} Z_j$, then there is an index $s\neq i$ such that $Z_s$ is not maximal, i.e.~$Z_s\subset Z_i$.
From part $(1)$, we conclude that $s=1$.

$(3)$ The inclusion $\pi_p(X_i\cap X_i')\subseteq \pi_p(X_i)\cap \pi_p(X_i')=Z_i\cap Z_i'$ is immediate.
Let $z$ be a generic real point of $Z_i\cap Z_i'$. We will show that there are always $x_i\in X_i$ and $x_i'\in X_i'$ such that $\pi_p(x_i) = z = \pi_p(x_i')$: First, consider the case $i=1$. Then there exists $x_i'\in X_i'$ with $\pi_p(x_i')=z$, because $\pi_p$ is a morphism if restricted to $X_j$ for $j\neq i$. So the line $\Lambda = \pi_p^{-1}(z)$ intersects $X$ in $x_i'$ and $p$. As in the proof of part $(1)$, we conclude $\Lambda\subset X_1$, because there exists $x_i\in X_i(\RR)$ with $\pi_p(x_i) = z$ or $\Lambda$ is tangent to $X_i$ at $p$. So there exists $x_i\in X_i$ with $\pi_p(x_i) = z = \pi_p(x_i')$.
Next, we assume $i\neq 1$. Then there is a point $x_i\in X_i$ with $\pi_p(x_i) = z$. Again, we can use this point to show that the line $\pi_p^{-1}(z)$ is contained in $X_1$, so there is a point $x_i'\in X_1\subset X_i'$ with $\pi_p(x_i') = z = \pi_p(x_i)$.

In conclusion, we can always find real points $x_i\in X_i$ and $x_i'\in X_i'$ with $\pi_p(x_i) = z = \pi_p(x_i')$.
So the real line $\Lambda = \pi_p^{-1}(z)$ intersects $X$ at $p$ and points $x_i\in X_i$ and $x_i'\in X_i'$. By Lemma~\ref{lem:HSandP}$(2)$, $\Lambda$ intersects $X$ either in a zero-dimensional scheme of length at most two or $\Lambda\subseteq X$. In the first case we conclude that $x_i=x_i'\in (X_i\cap X_i')(\RR)$ and thus $z\in \pi_p(X_i\cap X_i')$. In the second case we conclude that $\Lambda\subseteq X_1$ because $X_1$ is the only component of $X$ containing $p$.
If $i\neq 1$, then $x_i\in X_i\cap X_1\subseteq X_i\cap X_i'$ and thus $z\in \pi_p(X_i\cap X_i')$. If $i=1$, then $x_i'\in X_1'\cap X_1$ and thus $z\in \pi_p(X_i\cap X_i')$ as claimed.
\end{proof}

\begin{remark} The inclusion $ Z_i'\subseteq\pi_p(X_i')$ in part $(2)$ of the previous lemma can be strict. As an example, let $X=L_1\cup L_2\cup L_3$ be the union of a line $L_2$ meeting two skew lines $L_1,L_3$ in $\PP^3$ and consider the projection away from a general point in $L_1$. Then $\pi_p(L_2') = Z_1\cup Z_3$, whereas $Z_2' = Z_3$.
\end{remark}

A scheme $X$ is linearly joined if there exists an ordering $X_i$, $i = 1,\dots,r$, of the irreducible components of $X$ such that the equality $X_j \cap \left(\bigcup_{s<j}X_s\right) = \langle X_j\rangle \cap\langle\bigcup_{s<j}X_s\rangle$ holds for $1<j\leq r$. 

\begin{Def}
We say that an irreducible component $X_j$ of $X$ is an \emph{end} of $X$ if the equality $X_j\cap X_j' =\langle X_j\rangle \cap \langle X_j'\rangle$ holds. 
\end{Def}
The terminology is motivated by the fact that the last irreducible component in any ordering that makes $X$ linearly joined. Moreover, if $X_j$ is an end, then $\langle X_j'\rangle \cap X = X_j'$. So ends can be used to construct the required orderings inductively.

\begin{lemma}\label{lem:Irreducible}
Assume $X\subseteq \PP^n$ is closed, reduced, and totally real such that $\Sigma_X^*=P_X^*$. If there is a real point $p\in X$ such that $\pi_p(X)$ is irreducible, then $X$ has at most two irreducible components which are varieties of minimal degree in their span. Furthermore, $X$ is linearly joined.
\end{lemma}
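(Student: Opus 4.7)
The plan is to leverage the detailed analysis of projections developed in Lemmas \ref{lem:componentsMinDegree} and \ref{lem:pairwiseInt}. I would write $X = X_1 \cup \cdots \cup X_r$ for the irreducible decomposition and take $X_1$ to be a component containing $p$. By Lemma \ref{lem:componentsMinDegree}, every $X_i$ is a variety of minimal degree in its linear span $L_i$ and $X \cap L_i = X_i$, so it only remains to bound $r$ by $2$ and to establish that $X$ is linearly joined. The preliminary step I expect to be the main obstacle is reducing to the case in which $p$ is a generic real point of $X_1$, because the argument of Lemma \ref{lem:pairwiseInt} uses genericity both to ensure that $X_1$ is the unique component through $p$ and that $p$ is a smooth point. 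Since $X_1(\RR)$ is Zariski-dense in $X_1$, and the irreducibility of $\pi_q(X)$ is governed by containment relations among the irreducible subvarieties $\pi_q(X_i)$, a semicontinuity/constructibility argument using the totally real hypothesis should produce a generic $p' \in X_1(\RR)$ for which $\pi_{p'}(X)$ remains irreducible, placing us in the regime of Lemma \ref{lem:pairwiseInt}.

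With $p$ now assumed generic on $X_1$, set $Z_i = \pi_p(X_i)$. Each $Z_i$ is irreducible as the continuous image of an irreducible variety, and by hypothesis $Z = \pi_p(X) = \bigcup_i Z_i$ is irreducible. Therefore exactly one $Z_j$ is maximal under inclusion and $Z_i \subseteq Z_j$ for every $i \neq j$. Lemma \ref{lem:pairwiseInt}(1) applied to each such inclusion forces $i = 1$ and simultaneously shows that $X_1$ is a linear space. Because the index $1$ can appear only once, there is at most one index $i \neq j$, and therefore $r \leq 2$.

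The case $r = 1$ is immediate. When $r = 2$, necessarily $j = 2$, the component $X_1$ is a linear space so $X_1 = L_1$, and $X_2$ is a variety of minimal degree in $L_2$. To verify that $X$ is linearly joined it suffices to check $X_1 \cap X_2 = L_1 \cap L_2$. The inclusion $X_1 \cap X_2 \subseteq L_1 \cap L_2$ is automatic, and using $X_1 = L_1$ together with Lemma \ref{lem:componentsMinDegree} I would conclude
\[
L_1 \cap L_2 \;=\; X_1 \cap L_2 \;\subseteq\; X_1 \cap (X \cap L_2) \;=\; X_1 \cap X_2,
\]
which finishes the argument.
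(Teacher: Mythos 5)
Your argument has the same skeleton as the paper's: Lemma~\ref{lem:pairwiseInt}(1) applied to the containments among the $Z_i=\pi_p(X_i)$ forces $r\leq 2$ and shows $X_1$ is a linear space, Lemma~\ref{lem:componentsMinDegree} gives the minimal-degree claim, and one then checks $X_1\cap X_2=\langle X_1\rangle\cap\langle X_2\rangle$. Two differences are worth recording. First, your verification of linear joinedness is cleaner than the paper's: the paper argues that $X_1\cap X_2$ must be a hyperplane in the linear space $X_1$ (codimension one, and linear because otherwise $X_1\subseteq\langle X_2\rangle$, contradicting Lemma~\ref{lem:componentsMinDegree}), whereas you obtain the equality in one line from $X\cap L_2=X_2$; both are fine, but your route is more direct. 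Second, the reduction to a generic center of projection, which you identify as the main obstacle, is not carried out in the paper at all: the paper applies Lemma~\ref{lem:pairwiseInt}(1) to the given $p$, which is harmless in context because the only place Lemma~\ref{lem:Irreducible} is invoked (the induction in Theorem~\ref{thm:psdsossmall}) already takes $p$ to be a generic real point of $X_1$, so the cleanest repair is simply to add genericity of $p$ to the hypothesis. Your proposed semicontinuity argument for this reduction, however, is not justified as stated and is the one genuine soft spot in your write-up: the locus of centers $q\in X_1$ for which $\pi_q(X)$ is irreducible is governed by the containments $\pi_q(X_i)\subseteq\pi_q(X_j)$, and these are not open conditions in $q$, since $\pi_q(X_i)$ jumps (drops dimension) as $q$ specializes into $X_i$ and the relevant cones $\mathrm{Cone}_q(X_j)$ degenerate as $q$ moves onto $X_j$; so "irreducible for one real $p$" does not formally propagate to "irreducible for generic $p'\in X_1$" without an additional argument. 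If you state the lemma with $p$ generic (as it is actually used), your proof is complete and correct.
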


\begin{proof}
Since $Z =\pi_p(X)$ is irreducible, we conclude from Lemma~\ref{lem:pairwiseInt}$(1)$ that $X$ has at most $2$ irreducible components. 
If $X$ is irreducible, then $X$ is a variety of minimal degree in its span by Lemma~\ref{lem:componentsMinDegree}.
If $X$ has $2$ irreducible components, then one of them is a linear space by Lemma \ref{lem:pairwiseInt}(1), say $X_1$, and the center of the projection $p$ is in $X_1$.
Then $X_1\cap X_2$ is a hyperplane in the linear space $X_1$. Indeed, $X_1\cap X_2$ must have codimension $1$ and if $X_1\cap X_2$ were not a hyperplane in $X_1$, then $X_1\subseteq \langle X_2\rangle$, contradicting Lemma~\ref{lem:componentsMinDegree}.
\end{proof}

\begin{theorem}\label{thm:psdsossmall}
If $X\subseteq \PP^n$ is closed, reduced, and totally real such that $\Sigma_X^*=P_X^*$, then $X$ is linearly joined and its irreducible components are varieties of minimal degree in their span.
\end{theorem}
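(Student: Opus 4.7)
The first assertion, that every irreducible component of $X$ is a variety of minimal degree in its span, is exactly Lemma~\ref{lem:componentsMinDegree}, so it remains to produce a linearly-joined ordering of the components. I would proceed by induction on the number $r$ of irreducible components, with $r=1$ vacuous. For $r\geq 2$, it suffices by the remark following the definition of end to locate a single irreducible end $X_j$: once such an end exists, the remark gives $X\cap\langle X_j'\rangle=X_j'$, and iterating Lemma~\ref{lem:HSandP}(3) over a sequence of hyperplanes cutting out $\langle X_j'\rangle$ shows $\Sigma_{X_j'}^*=P_{X_j'}^*$. The induction on $r$ then provides a linearly-joined ordering of $X_j'$ which, completed by $X_j$ at the end, linearizes $X$.

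To locate an end I would run a secondary induction on $n=\dim\langle X\rangle$. Assume first that some irreducible component, say $X_1$, is not a linear space. Pick a generic real point $p\in X_1(\RR)$ and project. By Lemma~\ref{lem:HSandP}(4) the image $Z=\pi_p(X)\subseteq\PP^{n-1}$ satisfies $\Sigma_Z^*=P_Z^*$, and by Lemma~\ref{lem:pairwiseInt}(1) the $Z_i:=\pi_p(X_i)$ are distinct, pairwise incomparable, and are the irreducible components of $Z$. The inner induction produces an end $Z_k$ of $Z$, so $Z_k\cap Z_k'=\langle Z_k\rangle\cap\langle Z_k'\rangle$ is a real linear subspace, in particular integral and totally real. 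Lemma~\ref{lem:pairwiseInt}(2) then gives $\pi_p(X_k')=Z_k'$, and Lemma~\ref{lem:pairwiseInt}(3) yields $\pi_p(X_k\cap X_k')=\langle Z_k\rangle\cap\langle Z_k'\rangle$. The preimage of this linear space under $\pi_p$ is the projective cone with apex $p$ over a linear subspace, hence itself a linear subspace $\Lambda$ of $\PP^n$, and it contains $X_k\cap X_k'$. A dimension count, relating $\dim\langle Z_i\rangle$ to $\dim\langle X_i\rangle$ according to whether $p\in\langle X_i\rangle$, identifies $\Lambda$ with $\langle X_k\rangle\cap\langle X_k'\rangle$; combining this with Lemma~\ref{lem:HSandP}(2) (which controls the intersection of real lines with $X$) and Lemma~\ref{lem:componentsMinDegree} (which gives $\langle X_i\rangle\cap X=X_i$), one forces $\Lambda\subseteq X_k\cap X_k'$, so $X_k$ is an end of $X$.

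The chief obstacle is the remaining case in which every irreducible component of $X$ is a linear space: Lemma~\ref{lem:pairwiseInt}(1) no longer forces the components of $Z$ to be pairwise incomparable, and the lifting argument above may fail. I plan to treat this case by a separate combinatorial analysis, exploiting that Lemma~\ref{lem:HSandP}(2) forbids any real line from meeting three distinct irreducible components and that Lemma~\ref{lem:lindep} rigidly constrains which sets of components can span a subspace. These constraints should force the intersection pattern of the linear pieces to admit a component playing the role of a ``simplicial vertex'' that serves as an end. Carrying this combinatorial step through cleanly, and ensuring it dovetails with the dimension-lifting argument in the main case, is where I anticipate the bulk of the technical difficulty.
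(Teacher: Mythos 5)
Your overall strategy---project from a generic real point of a component, use the inductive hypothesis to find an end of the image $Z$, lift it to an end of $X$ via Lemma~\ref{lem:pairwiseInt}, then finish by induction on the number of components using Lemma~\ref{lem:HSandP}(3)---is the same as the paper's, and your ``main case'' is essentially the paper's argument. The genuine gap is the case you defer to ``a separate combinatorial analysis,'' and it is not a marginal case: when the components of $X$ are linear subspaces (e.g.\ every arrangement $X_G$ of Section~\ref{MatrixCompl}), Lemma~\ref{lem:pairwiseInt}(1) permits $Z_1\subset Z_j$, and the hypothesis $\pi_p(X_i')=Z_i'$ needed for your lifting step can fail precisely for the unique component $Z_k$ containing $Z_1$. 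The paper does not resolve this by a combinatorial analysis of intersection patterns; it strengthens the inductive statement to assert that a non-integral $X$ has \emph{two} distinct ends. Then, in the bad case, at least one end $Z_m$ of $Z$ is not the component containing $Z_1$, so $\pi_p(X_m')=Z_m'$ and $X_m$ lifts to an end of $X$; projecting a second time from a generic point of $X_m$ produces a second, distinct end. Without this strengthened hypothesis (or a substitute), your induction can stall: the single end of $Z$ supplied by the inductive hypothesis might be exactly the component to which the lifting criterion does not apply.

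Two smaller points in your main case. First, the conclusion ``$\Lambda\subseteq X_k\cap X_k'$'' cannot be literally correct: $\Lambda$ is the cone with apex $p$ over $\langle Z_k\rangle\cap\langle Z_k'\rangle$, so $p\in\Lambda$, while a generic $p\in X_1(\RR)$ does not lie in $X_k\cap X_k'$. What you want is $\langle X_k\rangle\cap\langle X_k'\rangle\subseteq X_k\cap X_k'$, and your dimension count does deliver this when $p\notin\langle X_k\rangle\cap\langle X_k'\rangle$: then $\dim(X_k\cap X_k')=\dim(\langle X_k\rangle\cap\langle X_k'\rangle)$, and a closed reduced subscheme of an irreducible linear space of the same dimension must contain it. Second, you must dispose of the remaining subcase $p\in\langle X_k\rangle\cap\langle X_k'\rangle$: genericity of $p$ then forces $X_1\subseteq\langle X_k\rangle\cap\langle X_k'\rangle$, hence $k=1$ by Lemma~\ref{lem:componentsMinDegree}, and applying $\pi_p$ gives $Z_k\subseteq Z_k\cap Z_k'$, contradicting that $Z_k$ is an irreducible component of $Z$. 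This is exactly how the paper closes that loop (arguing by contradiction from strictness of $X_k\cap X_k'\subset\langle X_k\rangle\cap\langle X_k'\rangle$), and it needs to be made explicit in your write-up.
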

\begin{proof}
The proof is by induction on $n$. 
The case $n=0$ is immediate, the case $n=1$ follows from Lemma~\ref{lem:HSandP}$(2)$. For $n>1$, assume $X$ has $r$ irreducible components $X_1,\dots,X_r$. We will also use induction on $r$. The case where $X$ is integral follows from Lemma~\ref{lem:componentsMinDegree}. In case that $X$ is not integral, we will additionally show in the induction that $X$ has at least two distinct ends.

If $r>1$, let $p\in X_1(\RR)$ be a generic point and let $Z:=\pi_p(X)\subseteq \PP^{n-1}$. The variety $Z$ is totally real and satisfies $\Sigma_Z^*=P_Z^*$ by Lemma~\ref{lem:HSandP}$(4)$. Therefore, by the induction hypothesis on $n$, $Z$ is a linear join of varieties of minimal degree. 
If $Z$ is irreducible, then $X$ has two irreducible components and is linearly joined by Lemma~\ref{lem:Irreducible}. Both components are ends. If $Z$ has at least two irreducible components, let $i$ be an index such that $Z_i$ is an end for $Z$. To complete the induction step, we will use the fact that
if the equality $\pi_p(X_i')=Z_i'$ holds, $X_i$ is an end for $X$. We first prove this claim. Suppose for contradiction that the inclusion $X_i\cap X_i'\subset \langle X_i\rangle \cap \langle X_i'\rangle$ is strict.
Because $Z_i$ is an end of $Z$, the intersection $Z_i\cap Z_i'$ is integral and totally real. So by Lemma \ref{lem:pairwiseInt}$(3)$, we have the inclusions
\[ Z_i\cap Z_i' = \pi_p(X_i\cap X_i')\subseteq \pi_p\left(\langle X_i\rangle \cap \langle X_i'\rangle\right)\subseteq \pi_p\left(\langle X_i\rangle\right) \cap \pi_p\left( \langle X_i'\rangle\right) =  \langle Z_i\rangle \cap \langle Z_i'\rangle.
\]
Since $Z_i\cap Z_i'=\langle Z_i\rangle \cap \langle Z_i'\rangle$, all these inclusions are actually equalities. In particular, $\dim(\pi_p(X_i\cap X_i')) = \dim(\pi_p(\scp{X_i}\cap\scp{X_i'}))$, which shows that
the projection of $\langle X_i \rangle \cap \langle X_i' \rangle$ away from $p$ has smaller dimension than $\langle X_i\rangle \cap \langle X_i'\rangle$; hence we must have $p\in \langle X_i\rangle \cap \langle X_i'\rangle$. Since $p$ is a generic point of $X_1$, it follows that $X_1\subseteq \langle X_i\rangle \cap \langle X_i'\rangle$; so the index $i$ must equal one by Lemma~\ref{lem:componentsMinDegree}.
Applying $\pi_p$ to the inclusion $X_i\subseteq \langle X_i\rangle \cap \langle X_i'\rangle$ we conclude that $Z_i\subseteq Z_i\cap Z_i'$ and therefore $Z_i$ is not an irreducible component of $Z$, a contradiction. This contradiction proves the claim that $X_i$ is an end for $X$.

Next, we show that the irreducible component $X_1$ containing $p$ can be chosen to construct at least two distinct ends for $X$ using the claim. 
We distinguish two cases according to Lemma \ref{lem:pairwiseInt}$(2)$.
\begin{enumerate}
\item{There is an irreducible component $X_1$ such that the image $\pi_p(X_1) = Z_1$ is either an irreducible component of $\pi_p(X)$ or $Z_1$ is contained in at least two distinct irreducible components of $\pi_p(X)$.}
\item{Every irreducible component $X_1$ of $X$ has the property that after projection from a generic point $p\in X_1$, the image $\pi_p(X_1) = Z_1$ is contained in $\pi_p(X_j) = Z_j$ for exactly one $j\neq 1$.}
\end{enumerate}
In case $(1)$, Lemma~\ref{lem:pairwiseInt}$(2)$ shows that $\pi_p(X_i')=Z_i'$ for every irreducible component $Z_i$ of $Z$. It follows that if $Z_{i}$ and $Z_{j}$ are distinct ends of $Z$, then $X_{i}$ and $X_{j}$ are distinct ends of $X$ by the above claim.
In case $(2)$, Lemma~\ref{lem:pairwiseInt}$(2)$ shows that $\pi_p(X_i')=Z_i'$ for every irreducible component $Z_i$ of $Z$ except for the unique irreducible component $Z_k$ of $Z$ which contains $Z_1$. Since $Z$ has at least two ends, at least one of them, say $Z_m$, is not $Z_k$. As before, the above claim shows that $X_m$ is an end for $X$. Repeating the argument for the projection away from a general point in $p'\in X_m$, we produce a second end for $X$ which must necessarily be distinct from $X_m$ because $\pi_{p'}(X_m)$ is not an irreducible component of $\pi_{p'}(X)$ since we are in case $(2)$.
We conclude that, in all cases, $X$ has at least two distinct ends. If $X_i$ is one of them, $X\cap \langle X_i'\rangle =X_i'$ and by Lemma~\ref{lem:HSandP}$(3)$ and the induction hypothesis on $r$ we conclude that $X_i'$ is a linear join of varieties of minimal degree. As a result $X=X_i'\cup X_i$ is a linear join of varieties of minimal degree by putting $X_i$ as the last component in the ordering, proving the Theorem.
\end{proof}

\begin{proof}[Proof of Theorem~\ref{thm:2regmain}]
Finally, it is easy to show that a scheme which is a linear join of varieties of minimal degree must have regularity $2$, see~\cite[Proposition $3.1$]{EGHP}. So Theorem~\ref{thm:2regmain} follows from Theorem \ref{thm:psdsossmall}.
\end{proof}

\subsection{Classification of spectrahedral cones with only rank $1$ extreme rays.}\label{ssec:rank1}
We now show that the results of the previous section allow us to classify spectrahedral cones whose extreme rays all have rank $1$. Let $K=S_+\cap L$ be a spectrahedral cone. If $L$ does not pass through the interior of $S_+$, then let $F$ be the largest face of $S_+$ such that $L$ passes through the relative interior of $F$. It follows that $L$ is contained in the span $\langle F \rangle$ of $F$. By~\cite[Theorem 1]{RamanaGoldman}, $F$ is the cone of positive semidefinite quadratic forms of a proper subspace $S_1$. Therefore it suffices to characterize all spectrahedral cones $K=S_+\cap L$ with only rank $1$ extreme rays where $L$ passes through the interior of $S_+$.

\begin{theorem}
Let $K=S_+\cap L\subsetneq S_+$ be a spectrahedral cone such that every extreme ray of $K$ has rank $1$, and suppose that the linear subspace $L$ passes through the interior of $S_+$. Then $K$ is the Hankel spectrahedron of a non-degenerate, reduced, $2$-regular, totally real scheme $X$ and $L=(I(X)_2)^\perp$.
\end{theorem}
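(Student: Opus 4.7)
The plan is to build the scheme $X$ directly from $L$ via Lemma~\ref{Lem:spectrahedral} and then verify, one by one, that $X$ has each of the listed properties. First I would check that $L$ is spanned by rank-$1$ elements so that the lemma applies. Since $L$ meets the interior of $S_+$, the cone $K = L\cap S_+$ has nonempty interior in $L$, so $\mathrm{span}(K) = L$. Moreover $K$ is pointed (as a closed subcone of the pointed cone $S_+$), and a finite-dimensional pointed closed convex cone equals the conic hull of its extreme rays (apply the Krein--Milman/Minkowski theorem to a compact base of $K$, obtained by slicing with a hyperplane $\{\mu = 1\}$ for any $\mu$ in the interior of $K^*$). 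By hypothesis every extreme ray of $K$ has rank $1$, so $L$ is spanned by rank-$1$ elements. Lemma~\ref{Lem:spectrahedral} (together with its proof) then supplies a reduced, totally real scheme $X \subseteq \PP^n$, set-theoretically defined by quadrics, with $K = \Sigma_X^*$ and $L = (I(X)_2)^\perp$.

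Next I would verify non-degeneracy, the strict inequality $X \neq \PP^n$, and the equality $\Sigma_X^* = P_X^*$. For non-degeneracy, suppose $X$ were contained in some hyperplane $\{h = 0\}$ with $h \in S_1$; then $h^2 \in I(X)_2$, but by assumption $L$ contains a positive definite functional $\ell$, and $\ell(h^2) > 0$ contradicts $\ell \in (I(X)_2)^\perp$. Since $K \subsetneq S_+$, the subspace $L$ is proper in $S_2^*$, whence $I(X)_2 \neq 0$ and $X \neq \PP^n$. For $\Sigma_X^* = P_X^*$, the inclusion $P_X^* \subseteq \Sigma_X^*$ is automatic from $\Sigma_X \subseteq P_X$; conversely, the discussion preceding Lemma~\ref{Lem:spectrahedral} identifies the rank-$1$ extreme rays of $\Sigma_X^*$ with evaluations at points of $X(\RR)$, and these are precisely the extreme rays of $P_X^*$. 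Since $\Sigma_X^* = K$ is the conic hull of its extreme rays, all of rank $1$, the reverse inclusion $\Sigma_X^* \subseteq P_X^*$ follows.

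Finally, Theorem~\ref{thm:2regmain} applied to the non-degenerate, closed, totally real, reduced scheme $X$ with $X \neq \PP^n$ and $\Sigma_X^* = P_X^*$ shows that $X$ is $2$-regular, completing the proof. I expect the only delicate point to be the first step's appeal to Krein--Milman: one must know that $K$ is pointed and admits a compact base, so that the rank-$1$ extreme rays not only generate $K$ as a conic hull but also span the ambient $L$ as a vector space; all remaining steps are direct applications of results already established in the paper.
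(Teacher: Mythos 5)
Your proposal is correct and follows essentially the same route as the paper: use the hypothesis that all extreme rays of $K$ have rank $1$ (plus the fact that a pointed closed cone is the conic hull of its extreme rays) to see that $L$ is spanned by rank-$1$ elements, invoke Lemma~\ref{Lem:spectrahedral} to produce the reduced totally real scheme $X$ with $K=\Sigma_X^*$ and $\Sigma_X^*=P_X^*$, and conclude $2$-regularity from Theorem~\ref{thm:2regmain}. Your version simply makes explicit several points the paper's proof leaves implicit (non-degeneracy from the positive definite element of $L$, $X\neq\PP^n$ from $K\subsetneq S_+$, and the identification $L=(I(X)_2)^\perp$), which is a welcome amount of care but not a different argument.
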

\begin{proof}
Let $I$ be the smallest radical ideal containing $L^\perp$, the subspace of $S_2$ of linear functionals vanishing on $I$. Then $I$ defines a reduced scheme $X=\VV(I)\subset \PP^n$. Since $L$ passes through the interior of $S_+$, $X$ is non-degenerate. Since $K$ is full-dimensional in $L$ and generated as a convex cone by quadratic forms of rank $1$, Lemma~\ref{Lem:spectrahedral} implies that $X$ is totally real, $K$ is the Hankel spectrahedron of $X$, and $\Sigma_X^*=P_X^*$. The Theorem now follows from Theorem \ref{thm:2regmain}.
\end{proof}

\section{Sharpness of Hankel index inequalities.}\label{SEC:BPF}
This section exhibits several classes of varieties for which the Hankel index equals the Green-Lazarsfeld index plus one (i.e. where the inequality in Theorem~\ref{Thm:JumpNumberpBp} is an equality). More generally, we introduce tools which can be used to establish this equality, obtaining a method to effectively compute the Hankel index from the free resolution via, for instance, Gr\"obner basis computations. We begin by introducing key numerical quantities of varieties $X$ over $\CC$.

Let $\alpha(X)$ be the largest integer $p\geq 1$ such that $X$ satisfies property $N_{2,p}$, i.e.~$\alpha(X)$ is the Green-Lazarsfeld index of $X$. Furthermore, let $\beta(X)$ be the largest integer such that $X$ has the $p$-base-point-free property. And lastly, we write $\gamma(X)$ for the largest $p\geq 1$ such that $X$ is $p$-small.
By Theorem~\ref{thm:pBp}, the inequalities $\alpha(X)\leq \beta(X)\leq \gamma(X)$ hold.
By Theorem~\ref{Thm:JumpNumberpBp}, the Hankel index $\eta(X)$ is bounded below by the inequality $\beta(X)+1\leq \eta(X)$.
Our next theorem is a useful tool for establishing upper bounds on the Hankel index whenever the failure of $p$-smallness can be witnessed by sets of real points. It provides an explicit construction of linear functionals $\tau\in \Sigma_X^*\setminus P_X^*$ which follows~\cite[Proposition 6.2]{BleNP}.

\begin{theorem}\label{Thm: deathRay} Let $X\subseteq \PP^n$ be a reduced and non-degenerate scheme which is set-theoretically defined by quadrics. If $W\subseteq \PP^n$ is a projective subspace of dimension $p$ such that $\Gamma:= W\cap X$ is zero-dimensional and contains $p+2$ real reduced points, then we have $\eta(X)\leq p$.
\end{theorem}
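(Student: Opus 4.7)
The plan is to produce a linear functional $\tau\in\Sigma_X^\ast\setminus P_X^\ast$ of rank $p$. Once such a $\tau$ exists, its minimal face in $\Sigma_X^\ast$ consists of elements whose image is contained in $\Image M_\tau$, so every extreme ray of that face has rank at most $p$; and because $X$ is set-theoretically defined by quadrics, the rank-one extreme rays of $\Sigma_X^\ast$ are precisely real point evaluations on $X$ and together generate $P_X^\ast$. Therefore $\tau\notin P_X^\ast$ forces at least one extreme ray of the face to have rank in $\{2,\dots,p\}$, giving the bound $\eta(X)\leq p$.

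For the construction I would first lift the points to vectors in $\RR^{n+1}$. Since $W$ has vector dimension $p+1$, the $p+2$ lifts satisfy a relation $\sum_i c_i v_i=0$; by passing to a minimal linearly dependent subset (which only strengthens the conclusion, by decreasing $p$) I may assume every $c_i\neq 0$ and, after rescaling, that $v_{p+2}=\sum_{i=1}^{p+1}\alpha_i v_i$ with all $\alpha_i\neq 0$. I then define
\[
\tau \;:=\; \sum_{i=1}^{p+1}\ev_{v_i}\;-\;\frac{1}{|\alpha|^{2}}\,\ev_{v_{p+2}}\;\in\;S_{2}^{\ast}.
\]
Every $v_i\in X(\RR)$, so $\tau$ annihilates $I(X)_2$. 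Applying Cauchy--Schwarz to the identity $\ell(v_{p+2})=\sum\alpha_i\ell(v_i)$ gives $\tau(\ell^{2})=\sum_{i=1}^{p+1}\ell(v_i)^2-|\alpha|^{-2}\bigl(\sum\alpha_i\ell(v_i)\bigr)^2\geq 0$, so $\tau$ is positive semidefinite; and in the basis $v_1,\dots,v_{p+1}$ of $W$ the associated bilinear form on $S_1$ restricts to $W$ as $I_{p+1}-|\alpha|^{-2}\alpha\alpha^\top$, of rank exactly $p$. Thus $\tau\in\Sigma_X^\ast$ has rank $p$.

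The main obstacle will be verifying $\tau\notin P_X^\ast$; this is the content imported from \cite[Proposition~6.2]{BleNP}. Suppose for contradiction $\tau=\sum_j c_j\ev_{x_j}$ with $c_j>0$ and $x_j\in X(\RR)$. Equating bilinear forms gives $M_\tau=\sum_j c_j x_j x_j^\top$, forcing every active $x_j$ to lie in $\Image M_\tau$, which in the $v$-basis is the $p$-dimensional hyperplane $\alpha^\perp\cap W$; none of the $v_i$ lies there, because all $\alpha_i\neq 0$. Comparing the two expansions of $\tau$ in the $(p+2)$-dimensional space $\RR[\Gamma]_2^\ast$, in which $\{\ev_{v_1},\dots,\ev_{v_{p+2}}\}$ is a basis (the $p+2$ points in linear general position impose independent quadratic conditions on $W$), produces the contradiction between the coefficient $a_{p+2}=-|\alpha|^{-2}<0$ coming from the first expansion and the nonnegative coefficients of the second. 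With $\tau\notin P_X^\ast$ established, the spectrahedral argument of the first paragraph concludes the proof.
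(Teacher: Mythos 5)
Your construction of $\tau$ and the surrounding convex-geometric frame match the paper's strategy, and the computation that $\tau=\sum_{i=1}^{p+1}\ev_{v_i}-|\alpha|^{-2}\ev_{v_{p+2}}$ lies in $\Sigma_X^*$ and has rank exactly $p$ is correct. The gap is in the final step, where you rule out $\tau\in P_X^*$. You assert that $\RR[\Gamma]_2^*$ is $(p+2)$-dimensional with basis $\{\ev_{v_1},\dots,\ev_{v_{p+2}}\}$, but the hypothesis only says that $\Gamma=W\cap X$ \emph{contains} $p+2$ real reduced points: as a finite scheme cut out by quadrics in $\PP(W)\cong\PP^p$, it may contain many further points, and the same is true for the intersection of $X$ with the span of your minimal dependent subset. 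A representation $\tau=\sum_j c_j\ev_{x_j}$ with $c_j>0$ only forces the $x_j$ to lie in $\PP(\Image B_\tau)\cap X\subseteq\Gamma$; your argument correctly excludes the $v_i$ from the support, but says nothing about additional real points of $\Gamma$ lying on the hyperplane $\PP(\alpha^\perp)\subset\PP(W)$. If enough such points exist, the rank-$p$ form $B_\tau$ can perfectly well be a positive combination of the corresponding rank-one forms, and ``comparing coefficients'' breaks down because the evaluations at all points of $\Gamma$ satisfy nontrivial linear relations in $R_2^*$.

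This is precisely why the paper does not fix the coefficients. It takes $\tau=\sum a_i\ev_{q_i}$ with $a_1,\dots,a_{k+1}>0$ arbitrary and $a_{k+2}$ tuned so that $B_\tau$ is positive semidefinite of rank $k$, and observes that $\Ker(B_\tau)$ is spanned by $(W')^{\perp}$ together with one extra linear form $\ell$ depending on the $a_i$; for a sufficiently general choice of $a_1,\dots,a_{k+1}$ this $\ell$ vanishes at \emph{no} point of $\Gamma$ (one avoids finitely many proper hypersurfaces in $a$-space, one for each point of $\Gamma$). Any representation $\tau=\sum_j c_j\ev_{x_j}$ with $x_j\in\Gamma$, $c_j>0$ then gives $0=\tau(\ell^2)=\sum_j c_j\ell(x_j)^2$ and forces $\ell$ to vanish at a support point, a contradiction. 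Your choice corresponds to $a_1=\dots=a_{p+1}=1$, for which $\ell$ is the form with $\ell(v_i)=\alpha_i$, and nothing prevents this particular $\ell$ from vanishing at the extra points of $\Gamma$. To repair the argument, perturb the coefficients generically as in the paper (or justify separately why the support of any representation must be contained in $\{v_1,\dots,v_{p+2}\}$).
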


\begin{proof}
Let $\Gamma'\subseteq \Gamma$ be a minimal linearly dependent subset and let $W':=\langle \Gamma\rangle$. By construction $W'$ is a subspace of some dimension $k$ with $1\leq k\leq p$ containing the set $\Gamma'$ consisting of $k+2$ points $q_1,\dots q_{k+2}$ in linearly general position in $W'$.
Because $X$ is defined by quadrics and $\Gamma$ is finite we know that $k\geq 2$.

By minimality of $\Gamma'$, there is a linear relation $\sum_{i=1}^{k+2} u_i \ev_{q_i}=0$ among the functionals $ev_{q_i}$ in the coordinate ring of $W'$ which is unique, up to multiplication by a scalar and all its coefficients $u_i$ are nonzero real numbers.
To prove the claim, we show that there exist real numbers $a_i$ such that the linear functional $\tau:=\sum_{i=1}^{k+2}a_i\ev_{q_i}$ is an element of $\Sigma_X^*\setminus P_X^*$ with rank equal to $k$. 

If $g$ is a linear form, then we have $\tau(g^2)=\sum_{i=1}^{k+2}a_i g(q_i)^2$. Using the fact that 
$\sum_{i=1}^{k+2} u_i\ev_{q_i}=0$, we obtain $\tau(g^2)=Q\left(g(q_1),\dots, g(q_{k+1})\right)$, where $Q$ is the quadratic form in $\RR^{k+1}$ given by
\[Q(y_1,\dots, y_{k+1})= \sum_{i=1}^{k+1}a_iy_i^2+a_{k+2}\left(\frac{\sum_{i=1}^{k+1} u_iy_i}{-u_{k+2}}\right)^2.\]
For any choice of positive real numbers $a_1,\dots,a_{k+1}$, set $a_{k+2}$ to be the negative reciprocal of the maximum of the square $f = \left(\sum_{i=1}^{k+1} -u_i/u_{k+2} y_i\right)^2$ on the compact ellipsoid $E$ defined by $\sum_{i=1}^{k+1} a_iy_i^2 = 1$. This choice makes the quadratic form $Q$ positive semidefinite with a zero at the maximizer of the square $f$ on $E$.
The quadratic form $Q$ is strictly positive in the subspace $\sum u_iy_i=0$, so that its rank is exactly $k$.
In conclusion, the linear form $\tau\in \Sigma_X^*$ is nonnegative on squares and the kernel $\Ker(B_{\tau})$ of its moment matrix is generated by $(W')^{\perp}$ and a linear form $\ell$, which is uniquely determined by the choice of $a_1,\dots,a_{k+1}$, in the following way. Let $(v_1,\dots,v_{k+1})$ be the maximizer of $f$ on $E$, then the conditions $\ev_{q_i}(\ell)= v_i$ for $1\leq i\leq k+1$ define a unique linear form $\ell$ because $q_1,\dots,q_{k+1}$ are linearly independent and $\tau(\ell^2) = 0$. For all sufficiently general choices of $a_1,\dots,a_{k+1}$, the linear form $\ell$ does not vanish at any point of $\Gamma$.
To finish the proof, we verify that any such $\tau$ is not in $P_X^*$. Suppose for contradiction that $\tau\in P_X^*$ so that $\tau$ is a convex combination of point evaluations. Since $\Ker(B_{\tau})\supseteq (W')^{\perp}$, it must to be a convex combination of real points in $\Gamma$. But then the linear form $\ell$ must vanish at some point of $\Gamma$ by $\tau(\ell^2) = 0$, which contradicts our choice of $a_1,\dots,a_{k+1}$ . So $\tau$ certifies that $\eta(X)\leq k$.
\end{proof}

\begin{remark} Arguing as in~\cite[Section $7$]{BleNP}, the construction above can be extended to the case when the failure of $p$-smallness is witnessed by a set $\Gamma$ which contains at most one pair of complex conjugate points.
\end{remark}

By a curve, we mean a complete integral scheme of dimension one. 
If $X$ is a curve over $\CC$, the gonality of $X$, denoted ${\rm gon}(X)$, is the smallest integer $r$ such that $X$ admits a non-constant morphism $f:X\rightarrow \PP^1$ of degree $r$. If $X$ is a real algebraic curve, we let ${\rm gon}(X)$ be the gonality of its complexification $X_{\CC}:=X\times_{\rm Spec(\RR)} {\rm Spec}(\CC)$. The Clifford index of a line bundle $\mathcal{L}$ on $X$ is given by ${\rm Cliff}(\mathcal{L}):=g+1-h^0(X,\mathcal{L})-h^1(X,\mathcal{L})$. If $X$ has genus $g\geq 4$, the Clifford index of $X$ is defined by ${\rm Cliff}(X):=\min\{{\rm Cliff}(\mathcal{L}): h^0(X,\mathcal{L})\geq 2, h^1(X,\mathcal{L})\geq 2\}$.

\begin{lemma}\label{lem:Bounds} Let $X\subseteq \PP^n$ be a reduced, non-degenerate scheme. The equalities $\alpha(X)=\beta(X)=\gamma(X)$ hold if either
\begin{enumerate}
\item $\gamma(X)\geq \codim(X)$, or
\item $\alpha(X)= \codim(X)-1$ and $X$ is not a hypersurface, or
\item $X$ is the canonical model of a general curve of genus $g\geq 4$.
\end{enumerate}
Moreover, we have $\beta(X)=\infty$ in case (1), $\beta(X) = \codim(X)-1 = \alpha(X)$ in case (2), and $\beta(X) = \lceil\frac{1}{2}(g-2)\rceil-1$ in case (3).
\end{lemma}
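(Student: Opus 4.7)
The plan is to use the chain of inequalities $\alpha(X)\leq\beta(X)\leq\gamma(X)$ from Theorem~\ref{thm:pBp} and, in each of the three cases, produce an upper bound on $\gamma(X)$ that matches the given lower bound on $\alpha(X)$, forcing all three invariants to coincide.

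For case (1), I will argue that the hypothesis $\gamma(X)\geq\codim(X)$ in fact forces $\gamma(X)=\infty$. Indeed, for any linear subspace $L\subseteq\PP^n$ with $\dim L>\codim(X)$, a dimension count shows that any nonempty intersection of $L$ with a top-dimensional irreducible component of $X$ has strictly positive dimension; a short induction on the components of $X$ in decreasing order of dimension then shows that the $p$-smallness condition is vacuous once $p$ exceeds $\codim(X)$. Hence $X$ is a small scheme, so the main theorem of Eisenbud--Green--Hulek--Popescu~\cite{EGHP} yields $\reg(X)=2$. By the ``moreover'' part of Theorem~\ref{thm:pBp}, this is equivalent to $\alpha(X)=\beta(X)=\gamma(X)=\infty$.

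For case (2), the hypothesis $\alpha(X)=\codim(X)-1$ is finite, so by Theorem~\ref{thm:pBp} the regularity of $X$ is strictly greater than two, and in particular $X$ is not a variety of minimal degree. Hence $\deg(X)\geq\codim(X)+2$. Taking a general linear subspace $L\subseteq\PP^n$ of dimension exactly $\codim(X)$ and using that $X$ is not a hypersurface, Bertini's theorem ensures that $\Gamma:=L\cap X$ is a zero-dimensional reduced scheme of length $\deg(X)\geq\codim(X)+2$. But then $\Gamma$ consists of at least $\codim(X)+2$ points contained in $L=\PP^{\codim(X)}$ and therefore cannot be linearly independent. This witnesses the failure of $\codim(X)$-smallness, so $\gamma(X)\leq\codim(X)-1$, and the chain of inequalities together with the hypothesis forces $\alpha(X)=\beta(X)=\gamma(X)=\codim(X)-1$.

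Case (3) is the most substantive. For a general curve $X$ of genus $g\geq 4$, the gonality is $\gamma_X=\lceil(g+2)/2\rceil$ and the Clifford index equals $\gamma_X-2=\lceil(g-2)/2\rceil$. Voisin's proof of Green's canonical syzygy conjecture for generic curves supplies the lower bound $\alpha(X)\geq\mathrm{Cliff}(X)-1=\lceil(g-2)/2\rceil-1$. For the matching upper bound on $\gamma(X)$, I will take a general effective divisor $D$ in the gonality pencil $g^1_{\gamma_X}$; by the geometric Riemann--Roch theorem, the span $L:=\langle D\rangle$ of $D$ in the canonical embedding $X\subset\PP^{g-1}$ has projective dimension $\gamma_X-1-\dim|D|=\gamma_X-2$. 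For $D$ general in the pencil, $D$ is reduced and $L\cap X$ is zero-dimensional; since $L\cap X\supseteq D$ contains at least $\gamma_X>(\gamma_X-2)+1=\dim L+1$ points of $L$, it fails to be linearly independent. Hence $\gamma(X)\leq\dim L-1=\gamma_X-3=\lceil(g-2)/2\rceil-1$, and all three invariants agree. The main obstacle is verifying the geometric inputs for a general canonical curve, namely that a general divisor in the gonality pencil is reduced, that its span has the expected dimension $\gamma_X-2$, and that this span cuts $X$ in a zero-dimensional scheme; these are standard consequences of Brill--Noether theory and the geometric Riemann--Roch theorem, for which I would refer to Arbarello--Cornalba--Griffiths--Harris rather than reprove them here.
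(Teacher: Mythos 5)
Your treatments of cases (1) and (3) follow the paper's own argument. In (1), the observation that any linear space of dimension exceeding $\codim(X)$ meets a top-dimensional component of $X$ in positive dimension makes $p$-smallness vacuous beyond $p=\codim(X)$, so $X$ is small and Theorem~\ref{thm:pBp} (via Eisenbud--Green--Hulek--Popescu) gives $\alpha(X)=\beta(X)=\gamma(X)=\infty$. In (3), the paper likewise bounds $\gamma(X)$ from above by exhibiting a fiber of the gonality pencil as $r=\lceil\frac{1}{2}(g+2)\rceil$ points spanning only a $\PP^{r-2}$ (it computes $h^0=2$ and $h^1=g+1-r$ via Riemann--Roch and Serre duality rather than quoting geometric Riemann--Roch, but this is the same computation), and uses Voisin/Teixidor together with sharpness of Brill--Noether for the lower bound on $\alpha(X)$. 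These parts are fine.

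Case (2) has a genuine gap. The lemma is stated for reduced schemes and does not assume $X$ irreducible, but your argument hinges on the chain ``$\reg(X)>2$ implies $X$ is not of minimal degree, hence $\deg(X)\geq\codim(X)+2$,'' which is only valid for irreducible varieties. For a reducible reduced non-degenerate scheme the degree (a sum over top-dimensional components only) need not even satisfy $\deg(X)\geq\codim(X)+1$ --- two skew lines in $\PP^3$ have degree $2$ and codimension $2$ --- and $2$-regularity of reduced schemes is characterized by being linearly joined from varieties of minimal degree, not by a degree bound; correspondingly, a general $L$ of dimension $\codim(X)$ misses all lower-dimensional components, so your point count only sees part of $X$. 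The paper avoids all of this with a purely formal squeeze: $\gamma(X)\geq\alpha(X)=\codim(X)-1$ by Theorem~\ref{thm:pBp}; if $\gamma(X)\geq\codim(X)$, then case (1) would force $\alpha(X)=\infty$, contradicting the finiteness of $\alpha(X)=\codim(X)-1$ (here ``not a hypersurface'' guarantees $\codim(X)-1\geq 1$); hence $\gamma(X)=\codim(X)-1$ and the chain of inequalities collapses. Your geometric construction is essentially what the paper does later in Theorem~\ref{Thm:MainJN}(2), where irreducibility \emph{is} assumed and an explicit dependent point configuration is genuinely needed; but as a proof of case (2) of this lemma it either needs the irreducibility hypothesis added or should be replaced by the squeeze.
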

\begin{proof}
If $\gamma(X)\geq \codim(X)$, then $X$ is $p$-small for all $p\leq \codim(X)$, i.e.~$X$ intersects every linear subspace $L$ for which $X\cap L$ finite in a linearly independent set. Thus $X$ is small and $\alpha(X)=\beta(X)=\gamma(X)=\infty$ by Theorem~\ref{thm:pBp}, proving (1). 

If $\alpha(X)= \codim(X)-1$, then $\gamma(X)$ is $\codim(X)-1$ or $\gamma(X) = \infty$ by part (1) because $\alpha(X)\leq \gamma(X)$ by Theorem~\ref{thm:pBp}. When $\gamma(X) = \codim(X)-1 = \alpha(X)$, we get $\alpha(X)=\beta(X)=\gamma(X)=\codim(X)-1$ by Theorem~\ref{thm:pBp}. Since $\gamma(X) = \codim(X)$ cannot occur here by (1), this proves (2). 

Now suppose that $X$ is a non-singular curve of gonality $r$ which is not hyperelliptic and let $f:X\rightarrow \PP^1$ be a non-constant morphism of degree $r$. Let $L=f^{*}(s)$ be the divisor of a fiber of $f$ and let $X\subseteq \PP^{g-1}$ be the canonical model of $X$. We will show that $\gamma(X)\leq r-3$ by constructing a set of $r$ points spanning a projective subspace of dimension $r-2$. We claim that $h^0(X,\Osh_X[L])\leq 2$ since otherwise choosing an additional point $q$ of $X$ we could construct a divisor $L-q$ with at least two sections and thus a non-constant morphism $f':X\rightarrow\PP^1$ of degree strictly less than $r$. We conclude that $h^0(X,\Osh_X[L])=2$. By the Riemann-Roch Theorem
$2-h^1(X,\Osh_X[L]) = r+1-g$ so $h^1(X,\Osh_X[L])=g+1-r$ and by Serre duality $h^1(X,\Osh_X[L])=h^0(X,\Osh_X[K-L])$. 
In more geometric terms, we have proved that there exist exactly $g+1-r$ linearly independent forms in $\PP^{g-1}$ which vanish on the $r$ points of $L$ in $X$ so these $r$ points span a projective subspace of dimension $g-1-(g+1-r)=r-2$ in $\PP^{g-1}$ as claimed. We conclude that $\gamma(X)\leq r-3$. 

By the Brill--Noether Theorem~\cite[Theorem 8.16 and ensuing remark]{EisMR2103875}, the gonality of a general curve $X$ equals $r=\lceil\frac{1}{2}(g+2)\rceil$ so we conclude that $\gamma(X)\leq \lceil \frac{1}{2}(g-2)\rceil-1$. On the other hand Green's conjecture claims that for every non-hyperelliptic curve $X$ of genus $g\geq 4$ its canonical model should satisfy the equality $\alpha(X)= {\rm Cliff(X)}-1$. 
If $X$ is a general curve, then Green's conjecture is known to hold for $X$ by work of Voisin~\cite{V1},~\cite{V2} and Teixidor I Bigas~\cite{TIB}. By sharpness of the Brill-Noether Theorem~\cite{EH} we also know that ${\rm Cliff}(X)=\lceil \frac{1}{2}(g-1)\rceil$ for a general curve. Combining this fact with the inequalities of Theorem~\ref{thm:pBp} we conclude that $\alpha(X)=\beta(X)=\gamma(X)$ as claimed.
\end{proof}

\begin{remark} A reduced scheme $X$ satisfies $\gamma(X)\geq \codim(X)$ if and only if $X$ is small. If $X$ is irreducible, satisfies $\alpha(X)=\codim(X)-1$, and is not a hypersurface, then~\cite[Theorem 3.14]{HanKwak} shows that $X$ is an arithmetically Cohen-Macaulay variety of almost minimal degree (i.e. $\deg(X)=\codim(X)+2$). We exclude the case of hypersurfaces because they do not have a linear strand in their minimal free resolution.
\end{remark}

\begin{remark} The proof of the previous theorem shows that for every non-hyperelliptic curve $X$ of gonality $r$, the inequality $\gamma(X)\leq r-3$ holds. 
\end{remark}

\begin{remark} We restrict to curves of genus $g\geq 4$ because the only non-hyperelliptic curves of genus $g\leq 3$ are plane quartics and therefore their ideal is not generated by quadrics.
\end{remark}

\begin{remark}
It is natural to ask whether weakening condition $(2)$ above to $\gamma(X)= \codim(X)-1$ is a sufficient condition for the conclusion to hold. This is not the case. If $n\geq 1$ and $X$ is a set of $\binom{n+1}{2}$ points in general position in $\PP^n$, then $X$ imposes independent conditions on quadrics. It is immediate that the defining ideal of $X$ requires at least one cubic generator which implies $\alpha(X)=-\infty$ while $\gamma(X)=\codim(X)-1$. 
\end{remark}

Next we compute the Hankel index for all varieties appearing in Lemma~\ref{lem:Bounds} under additional arithmetic hypotheses. In all these, we prove that the lower bound from Corollary~\ref{Thm:JumpNumber} is sharp by constructing an element of $\ell\in\Sigma_X^*\setminus P_X^*$ of the appropriate rank via Theorem~\ref{Thm: deathRay}.

\begin{theorem}\label{Thm:MainJN} Let $X\subseteq \PP^n$ be a totally real, non-degenerate, reduced scheme. The equality $\eta(X)=\beta(X)+1$ holds whenever
\begin{enumerate}
\item $\gamma(X)\geq \codim(X)$ or
\item $\alpha(X) = \codim(X)-1$, $X$ is irreducible and is not a hypersurface or
\item {$X$ is the canonical model of a general curve $C$ of genus $g \geq 4$ whose gonality is totally real, in the sense that there exists a non-constant morphism $f:C\rightarrow \PP^1$ over $\RR$ of degree ${\rm gon}(X)$ with at least one fiber consisting of ${\rm gon}(C)$ real points.
}
\end{enumerate}
Moreover, $\eta(X)$ equals $\infty$, $\codim(X)$, and $\lceil\frac{1}{2}(g-2)\rceil$ respectively.
\end{theorem}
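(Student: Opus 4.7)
The plan is to combine the general lower bound $\eta(X)\geq \beta(X)+1$ from Theorem~\ref{Thm:JumpNumberpBp} with a matching upper bound produced by Theorem~\ref{Thm: deathRay}. Since Lemma~\ref{lem:Bounds} already identifies $\beta(X)$ in each of the three cases, it suffices, for each case, to exhibit a projective subspace $W\subseteq \PP^n$ of dimension $p=\beta(X)+1$ such that $W\cap X$ is zero-dimensional and contains $p+2$ real reduced points of $X$. Case~(1) is handled separately: Lemma~\ref{lem:Bounds} gives $\reg(X)=2$, and Corollary~\ref{cor:reg} immediately yields $\eta(X)=\infty=\beta(X)+1$ without any need for Theorem~\ref{Thm: deathRay}.

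In case~(2), where $\beta(X)=\codim(X)-1$, I would exploit the fact (from the remark after Lemma~\ref{lem:Bounds}) that $X$ is arithmetically Cohen--Macaulay of almost minimal degree $\deg(X)=\codim(X)+2$. My plan is to pick $\codim(X)+1$ very general real points $p_1,\ldots,p_{\codim(X)+1}$ on $X$ (possible because $X$ is irreducible, non-degenerate, and totally real, so $X(\RR)$ is Zariski dense) and let $W$ be their projective span. For a sufficiently general such choice, $\dim W=\codim(X)$, and since $W$ has complementary dimension to $X$, a standard openness argument on $\Grass(\codim(X)+1,n+1)$ gives that $W\cap X$ is reduced of length $\deg(X)=\codim(X)+2$. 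The key point is then Galois-theoretic: both $W$ and $X$ are defined over $\RR$, so $W\cap X$ is invariant under complex conjugation, and with $\codim(X)+1$ of the $\codim(X)+2$ intersection points already real, the single remaining point is forced to be self-conjugate, hence real. Theorem~\ref{Thm: deathRay} with $p=\codim(X)$ then yields $\eta(X)\leq\codim(X)=\beta(X)+1$.

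For case~(3) I would use the totally real gonality hypothesis directly. The proof of Lemma~\ref{lem:Bounds} already showed that for the canonical image $X\subset \PP^{g-1}$ of a general non-hyperelliptic curve $C$ of genus $g\geq 4$ with gonality $r=\lceil\tfrac{1}{2}(g+2)\rceil$, the image of any fiber of a degree-$r$ morphism $f\colon C\to \PP^1$ spans a projective subspace of dimension exactly $r-2$. Applying this to a fiber $f^{-1}(s)$ consisting of $r$ real points (guaranteed by the totally real gonality hypothesis) produces $r$ distinct real reduced points of $X$. I then let $W$ be their span, observe that $\dim W=r-2<g-1$ forces $W\cap X$ to be zero-dimensional, and apply Theorem~\ref{Thm: deathRay} with $p=r-2$ to conclude $\eta(X)\leq r-2=\lceil\tfrac{1}{2}(g-2)\rceil=\beta(X)+1$.

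The main obstacle I anticipate lies in case~(2): the available data gives $\codim(X)+1$ real points on $X$, but Theorem~\ref{Thm: deathRay} requires $p+2=\codim(X)+2$ of them. The elegance of the argument is that the almost-minimal-degree identity $\deg(X)=\codim(X)+2$ leaves room for exactly one free intersection point, which parity under complex conjugation pins down as real. Making this rigorous requires combining the Galois-theoretic argument with a verification that, for a generic choice of the prescribed real points, the span $W$ is ``generic enough'' to give a reduced, zero-dimensional intersection with $X$; this is an openness statement on the Grassmannian that should follow from the irreducibility and non-degeneracy of $X$.
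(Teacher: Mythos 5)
Your proposal is correct and follows essentially the same route as the paper: the lower bound from Theorem~\ref{Thm:JumpNumberpBp} together with Lemma~\ref{lem:Bounds}, and matching upper bounds from Theorem~\ref{Thm: deathRay} using, in case (2), the span of $\codim(X)+1$ general real points plus the conjugation argument forcing the last of the $\codim(X)+2$ intersection points to be real, and, in case (3), the real fiber of the gonality map spanning a $\PP^{r-2}$. The only cosmetic difference is that in case (1) you route through Corollary~\ref{cor:reg} rather than reading $\eta(X)=\infty$ directly off $\beta(X)=\infty$.
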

\begin{proof} By Theorem~\ref{Thm:JumpNumberpBp} the inequality $\beta(X)+1\leq \eta(X)$ holds for every $X$ so we obtain lower bounds for $\eta(X)$ from Lemma~\ref{lem:Bounds}.  
If $X$ satisfies $(1)$, then $\beta(X)=\infty$ by Lemma~\ref{lem:Bounds} and thus $\eta(X)=\infty$. 
If $X$ satisfies $\alpha(X)=\codim(X)-1$ and $X$ is not a hypersurface, then~\cite[Theorem 3.14]{HanKwak} shows that $X$ is an arithmetically Cohen-Macaulay variety  with $\deg(X)=\codim(X)+2$. Since $X$ is totally real, a generic set of $\codim(X)+1$ points spans a complementary subspace which intersects $X$ in $\codim(X)+2$ points. Since at least $\codim(X)+1$ of these are real, so is the last one. We conclude that $\eta(X)\leq \codim(X)$ by Theorem~\ref{Thm: deathRay}, proving the equality.
Finally, assume that $X\subseteq \PP^{g-1}$ satisfies $(3)$, let $r={\rm gon}(C)$ and let $q_1,\dots, q_r$ be the real distinct points of a fiber of $f$. Arguing as in the proof of Lemma~\ref{lem:Bounds} part $(3)$ we know that $r=\lceil\frac{1}{2}(g+2) \rceil$ and that these points span a projective space of dimension $m=r-2$ in $\PP^{g-1}$. By Theorem~\ref{Thm: deathRay} we conclude that $\eta(X)\leq \lceil\frac{1}{2}(g-2) \rceil$, proving the equality.
\end{proof}

\begin{remark} Any element $\ell\in \Sigma_X^*\setminus P_X^*$ with rank equal to $\eta(X)$ is automatically an extreme ray of $\Sigma_X^*$. In particular the elements constructed via Theorem~\ref{Thm: deathRay} in the proofs of part $(2)$ and $(3)$ of the previous theorem are extreme rays of $\Sigma_X^*$ which are not point evaluations.
\end{remark}

\section{Linear Joining and Sums of Squares}
In this section, we will give an alternative proof of one direction of one of our main theorems, namely that every nonnegative quadric on a reduced $2$-regular totally real scheme $X\subset \PP^n$ is a sum of squares. This proof has the advantage that we can keep track of the number of squares needed to represent a general nonnegative quadric as a sum of squares. This point of view gives a convex geometric description of the Hankel spectrahedron of a linearly joined scheme $X\cup Y$ in terms of the Hankel spectrahedra of $X$ and $Y$.

The following interpretation of sums of squares is the coordinate ring $\RR[X]_2$ of a real scheme $X\subset\PP^n$ follows from diagonalization of quadratic forms.

\begin{rem}\label{rem:rank}
Let $X \subseteq \PP^n$ be a scheme defined by $I\subseteq S$ and let $\RR[X]:=S/I$ be its homogeneous coordinate ring. Then a quadric $q\in\RR[X]_2$ is a sum of squares of linear forms in $\RR[X]$ if and only if there is a positive semidefinite quadric $Q\in\RR[x_0,\dots,x_n]_2$ such that $q + I_2 = Q + I_2\in \RR[X]_2$. Moreover, $q$ is a sum of at most $r$ squares in $\RR[X]_2$ if and only if there exists a positive semidefinite quadric $Q\in\RR[x_0,\dots,x_n]_2$ of rank at most $r$ representing $q$.
\end{rem}

Since every reduced $2$-regular scheme is a linearly joined sequence of varieties of minimal degree by Eisenbud-Green-Hulek-Popescu \cite[Theorem 0.4]{EGHP}, the following theorem will prove that every nonnegative quadric on a reduced totally real $2$-regular scheme is a sum of squares. The proof is similar to \cite[Lemma 2.5]{LVMR3207690}.

\begin{thm}\label{thm:linearjoins}
Let $X,Y\subset\PP^n$ be two real subschemes and suppose that $X\cap Y = \Span(X)\cap \Span(Y)$. Let $q\in \RR[X\cup Y]_2$ be a quadric and assume $f\vert_X$ and $f\vert_Y$ are both a sum of at most $r$ squares of linear forms in $\RR[X]_2$ and $\RR[Y]_2$, respectively. Then $f$ is a sum of at most $r$ squares of linear forms in $\RR[X\cup Y]_2$.
\end{thm}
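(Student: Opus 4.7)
The plan is to lift the sums-of-squares representations of $q$ on $X$ and on $Y$ to positive semidefinite Gram matrices of rank at most $r$ in the ambient polynomial ring, match them on the common linear span $L:=\Span(X)\cap\Span(Y)=X\cap Y$, and glue them into a single such matrix that represents $q$ on $X\cup Y$. By Remark~\ref{rem:rank} choose PSD quadrics $Q_X,Q_Y\in S_2$ of rank at most $r$ with $Q_X\equiv q\pmod{I(X)_2}$ and $Q_Y\equiv q\pmod{I(Y)_2}$. Passing to the principal submatrix on the coordinates of $V:=\Span(X)$ (respectively $W:=\Span(Y)$), extended by zeros, preserves rank and positive semidefiniteness and does not change the restriction to $X$ (respectively $Y$), so we may assume $Q_X$ is supported on the $V$-coordinates and $Q_Y$ on the $W$-coordinates.

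Next, choose coordinates $(u,v,w)$ on $\Span(X\cup Y)=V+W$ so that $u$ spans $L$, $v$ spans a complement of $L$ in $V$, and $w$ spans a complement of $L$ in $W$. In these coordinates $Q_X$ has nonzero blocks only in the $uu$, $uv$, $vv$ positions, and $Q_Y$ only in the $uu$, $uw$, $ww$ positions. The linear joining hypothesis gives $L=X\cap Y$ as schemes, so both $Q_X$ and $Q_Y$ restrict on $L$ to the same quadric $q|_L$; since each of these restrictions agrees with the corresponding $uu$-block, the $uu$-blocks of $Q_X$ and $Q_Y$ coincide exactly.

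The heart of the argument is to assemble a compatible global Gram factorization. Factor $Q_X=M_XM_X^{\top}$ and $Q_Y=M_YM_Y^{\top}$ with $M_X,M_Y\in\RR^{(n+1)\times r}$ supported on $V$-rows and $W$-rows respectively. The equality of the $uu$-blocks means that the $u$-row subblocks of $M_X$ and $M_Y$ produce identical Gram matrices, so by the standard fact that a PSD matrix determines its $r$-column Gram factor up to right multiplication by an element of $O(r)$, there exists $O\in O(r)$ with $M_Y^u O=M_X^u$. Replacing $M_Y$ by $M_YO$ does not change $Q_Y$ and aligns the $u$-rows. Let $M$ be the matrix whose $u$- and $v$-rows come from $M_X$, whose $w$-rows come from the aligned $M_YO$, and which is zero outside $V+W$. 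A direct block computation using that $(M_Y O)(M_Y O)^{\top}=M_YM_Y^{\top}$ and $M_X^u=M_Y^u O$ shows that $Q:=MM^{\top}$ is PSD of rank at most $r$, restricts on $V$ to $Q_X$ (hence on $X$ to $q|_X$), and restricts on $W$ to $Q_Y$ (hence on $Y$ to $q|_Y$). Therefore $Q-q\in I(X)_2\cap I(Y)_2=I(X\cup Y)_2$, and applying Remark~\ref{rem:rank} in the reverse direction expresses $q$ as a sum of at most $r$ squares in $\RR[X\cup Y]_2$.

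The principal difficulty, and the one place the hypothesis $X\cap Y=\Span(X)\cap\Span(Y)$ is essential, is the second step: without this scheme-theoretic identification the $uu$-blocks of $Q_X$ and $Q_Y$ would agree only modulo some nontrivial quadratic ideal on $L$, and the orthogonal alignment underlying the gluing step would fail.
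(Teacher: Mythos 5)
Your proposal is correct and follows essentially the same route as the paper's proof: lift to rank-$\leq r$ PSD Gram matrices supported on $\Span(X)$ and $\Span(Y)$, note that the block on $\Span(X)\cap\Span(Y)=X\cap Y$ is fully determined since that intersection is a linear space, and align the two factorizations by an element of $O(r)$ before gluing. The only difference is notational (matrix factors $MM^{\top}$ versus the paper's explicit Gram vectors $p_i,q_j$), so there is nothing substantive to add.
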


\begin{proof}
As explained above in Remark \ref{rem:rank}, the fact that $f\vert_X$ is a sum of at most $r$ squares in $\RR[X]_2$ means that there is a positive semidefinite quadratic form of rank at most $r$ on $\Span(X)\subset\PP^n$, which is equal to $f\vert_X$ when restricted to $X$. Similarly, $f\vert_Y$ is equal to the restriction of a quadratic form of rank at most $r$ on $\Span(Y)\subset\PP^n$ to $Y$. We choose a basis of the real linear space $\Span(X)\cap \Span(Y)$ and extend it to a basis of $\PP^n$ by choosing complements of it in $\Span(X)$ and $\Span(Y)$, respectively. In these coordinates, we can express the fact that both $f\vert_X$ and $f\vert_Y$ have positive semidefinite extensions of rank at most $r$, which have to be compatible along $\Span(X)\cap \Span(Y)$, in terms of Cholesky factorizations. We write $\scp{v,w}$ for the standard inner product of two vectors $v,w\in\RR^r$.
There are vectors $p_1,\dots,p_d$ and $q_1,\dots,q_e$ in $\RR^r$ such that the $(n+1)\times (n+1)$ symmetric matrix
\[
\left(\begin{array}[h]{cc|ccc|cc}
\scp{p_1,p_1} & \dots & \scp{p_1,p_{d-k+1}} & \dots & \scp{p_1,p_d} & \ast & \ast \\
\vdots & & \vdots & & \vdots & \ast & \ast \\ \hline
\scp{p_{d-k+1},p_1} & \dots & \scp{p_{d-k+1},p_{d-k+1}} = \scp{q_1,q_1} & \dots & \scp{p_{d-k+1},p_d} = \scp{q_1,q_k} & \dots & \scp{q_1,q_e} \\
\vdots & & \vdots &  & \vdots & & \vdots \\
\scp{p_d,p_1} & \dots & \scp{p_d,p_{d-k+1}} = \scp{q_k,q_1} & \dots & \scp{p_d,p_d} = \scp{q_k,q_k}& \dots & \scp{q_k,q_e} \\ \hline 
 \ast & \ast & \vdots & & \vdots & & \vdots \\
 \ast & \ast & \scp{q_e,q_1} & \dots & \scp{q_e,q_k} & \dots & \scp{q_e,q_e} \\
\end{array}\right)
\]
corresponds to a quadratic form on $\PP^n$ whose restrictions to $X$ and $Y$, respectively, represent $f\vert_X\in\RR[X]_2$ and $f\vert_Y\in\RR[Y]_2$, respectively. The restrictions of this quadratic form to $X$ and $Y$ impose no conditions on the entries in the lower left and upper right block, which is why we denoted them by $\ast$.
The middle $k\times k$ block of the matrix is completely determined because it represents the restriction $f\vert_{X\cap Y}$ to $X\cap Y = \Span(X)\cap \Span(Y)$, a real linear space of dimension denoted by $k$.

So there exists an orthogonal change of coordinates $T\in O(r)$ such that $T(q_i) = p_{d-k+i}$ for all $i = 1,\dots k$. Therefore, the matrix
\[
\left(\begin{array}[h]{cc|ccc|cc}
\scp{p_1,p_1} & \dots & \scp{p_1,p_{d-k+1}} & \dots & \scp{p_1,p_d} & \ast & \ast \\
\vdots & & \vdots & & \vdots & \ast & \ast \\ \hline
\scp{p_{d-k+1},p_1} & \dots & \scp{p_{d-k+1},p_{d-k+1}} = \scp{Tq_1,Tq_1} & \dots & \scp{p_{d-k+1},p_d} = \scp{Tq_1,Tq_k} & \dots & \scp{Tq_1,Tq_e} \\
\vdots & & \vdots &  & \vdots & & \vdots \\
\scp{p_d,p_1} & \dots & \scp{p_d,p_{d-k+1}} = \scp{Tq_k,Tq_1} & \dots & \scp{p_d,p_d} = \scp{Tq_k,Tq_k}& \dots & \scp{Tq_k,Tq_e} \\ \hline 
 \ast & \ast & \vdots & & \vdots & & \vdots \\
 \ast & \ast & \scp{Tq_e,Tq_1} & \dots & \scp{Tq_e,Tq_k} & \dots & \scp{Tq_e,Tq_e} \\
\end{array}\right)
\]
also represents the restrictions of $f$ to $X$ and $Y$. It is also positive semidefinite of rank $r$, which proves the claim.
\end{proof}

Using this result, we can control the number of squares needed to represent a general nonnegative quadratic form on a small scheme.
\begin{cor}
Let $X\subset\PP^n$ be a totally real subscheme of regularity $2$. Then every quadratic nonnegative on $X$ is a sum of $\dim(X) + 1$ squares of linear forms in $\RR[X]_2$.
\end{cor}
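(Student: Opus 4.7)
The plan is to combine the structure theorem of Eisenbud--Green--Hulek--Popescu with Theorem \ref{thm:linearjoins} inductively. By \cite[Theorem 0.4]{EGHP}, the reduced $2$-regular scheme $X$ admits an ordering $X_1,X_2,\ldots,X_m$ of its irreducible components in which each $X_i$ is a variety of minimal degree in its linear span and $X_j\cap (X_1\cup\cdots\cup X_{j-1}) = \langle X_j\rangle \cap \langle X_1\cup\cdots\cup X_{j-1}\rangle$ for all $1<j\leq m$. Each $X_i$ is totally real because $X$ is.

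The first step is to sharpen the equality $P_Y=\Sigma_Y$ from \cite{BSV} into a rank bound on a single irreducible piece: for a totally real variety of minimal degree $Y$ of dimension $d$, every nonnegative quadric on $Y$ is a sum of at most $d+1$ squares in $\RR[Y]_2$. By Remark \ref{rem:rank}, this amounts to producing a positive semidefinite extension of rank at most $d+1$ for any Gram matrix representing such a quadric. I would establish this by going through the del~Pezzo--Bertini classification of varieties of minimal degree (linear spaces, quadric hypersurfaces, rational normal scrolls, and the Veronese surface), and exhibiting a low-rank Gram factorization in each case from the explicit rational parametrization that these varieties carry; for instance, for a scroll the $2\times(d+1)$ parametrization yields directly a rank-$(d+1)$ Cholesky factor, and for a quadric hypersurface the bound comes from its signature.

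Second, I would run an induction on $m$. The base case $m=1$ is precisely the first step, since $\dim(X_1)=\dim(X)$. For the inductive step, let $X' = X_1\cup\cdots\cup X_{m-1}$, which is itself a totally real reduced scheme of regularity $2$ by the same linear-joining criterion. The induction hypothesis applied to $X'$ gives that $f\vert_{X'}$ is a sum of at most $\dim(X')+1\leq \dim(X)+1$ squares in $\RR[X']_2$, and the first step gives that $f\vert_{X_m}$ is a sum of at most $\dim(X_m)+1\leq \dim(X)+1$ squares in $\RR[X_m]_2$. The linearly joined condition provides precisely the hypothesis $X'\cap X_m = \langle X'\rangle \cap \langle X_m\rangle$ required by Theorem \ref{thm:linearjoins}, which then assembles the two bounded-rank Cholesky factorizations into a sum of at most $\dim(X)+1$ squares on $X=X'\cup X_m$.

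The main obstacle is the first step: the mere equality $P_Y=\Sigma_Y$ does not a priori control the number of squares, so one genuinely needs the classification of varieties of minimal degree to read off the rank-$(d+1)$ factorization. Once that bound is in place, the induction via Theorem \ref{thm:linearjoins} is essentially automatic and the bound $\dim(X)+1$ propagates across linear joins without deterioration, because Theorem \ref{thm:linearjoins} preserves the rank of the sum-of-squares representation rather than summing the ranks of its two inputs.
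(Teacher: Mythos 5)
Your argument is correct and follows essentially the same route as the paper: decompose $X$ via the Eisenbud--Green--Hulek--Popescu classification into a linearly joined sequence of varieties of minimal degree, bound the number of squares on each component by $\dim(X_i)+1$, and propagate the bound across the linear joins by induction using Theorem~\ref{thm:linearjoins}. The only difference is that your ``first step'' is exactly the result the paper cites from \cite{BPSV} (proved there via the del~Pezzo--Bertini classification, much as you sketch), so no new argument is needed for it.
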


\begin{proof}
This follows from Theorem \ref{thm:linearjoins} above and the count for varieties of minimal degree from \cite{BPSV} because a small scheme is the linear join of varieties of minimal degree by Eisenbud-Green-Hulek-Popescu \cite[Theorem 0.4]{EGHP}.
\end{proof}

From this point of view, we get a description of the Hankel spectrahedron of a linearly joined scheme $X\cup Y$ in terms of the Hankel spectrahedra of $X$ and $Y$. In particular, it implies that the extreme rays of the Hankel spectrahedron of $X\cup Y$ is the union of the extreme rays of the Hankel spectrahedra of $X$ and $Y$. The following result generalizes \cite[Theorem~3.1]{HeltonExtremePSD}.

\begin{cor}
Let $X,Y\subset\PP^n$ be two real subschemes and assume that $X\cap Y = \Span(X)\cap \Span(Y)$. Then the Hankel spectrahedron $\Sigma_{X\cup Y}^\ast$ is the convex hull of $\Sigma_{X}^\ast \cup \Sigma_Y^\ast$ and both of these Hankel spectrahedra are faces of $\Sigma_{X\cup Y}^\ast$.
\end{cor}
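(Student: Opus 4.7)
The plan is to apply Theorem~\ref{thm:linearjoins} together with convex duality for the convex hull statement, and then to extract the face structure by exploiting the linearity of $X \cap Y$.

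For the first assertion, Theorem~\ref{thm:linearjoins} (applied without a rank bound) yields the identity $\Sigma_{X\cup Y} = \pi_X^{-1}(\Sigma_X) \cap \pi_Y^{-1}(\Sigma_Y)$, where $\pi_X, \pi_Y$ are the surjective restriction maps from $\RR[X\cup Y]_2$ to $\RR[X]_2$ and $\RR[Y]_2$. The dual cone of an intersection of closed convex cones is the closure of the sum of the dual cones; since $\pi_X, \pi_Y$ are surjective their transposes are injective inclusions $\RR[X]_2^*, \RR[Y]_2^* \hookrightarrow \RR[X\cup Y]_2^*$, and identifying along these inclusions yields $\Sigma_{X \cup Y}^* = \overline{\Sigma_X^* + \Sigma_Y^*}$. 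Both summands are closed convex cones inside the proper cone $S_+$, so the sum $\Sigma_X^* + \Sigma_Y^*$ is already closed, and since each contains the origin it coincides with $\conv(\Sigma_X^* \cup \Sigma_Y^*)$.

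For the face statement it suffices by symmetry to prove $\Sigma_X^*$ is a face. Take $\ell_1, \ell_2 \in \Sigma_{X\cup Y}^*$ with $\ell_1+\ell_2 \in \Sigma_X^*$ and use the first part to decompose $\ell_i = a_i+b_i$ with $a_i \in \Sigma_X^*$ and $b_i \in \Sigma_Y^*$. The key algebraic input is the identity $(I_X)_2 + (I_Y)_2 = (I_{X\cap Y})_2$, forced by linear joining: because $X\cap Y = \Span(X)\cap\Span(Y)$ is a linear subspace, its ideal is generated in degree one by $(I_X)_1 + (I_Y)_1$, so $(I_{X\cap Y})_2 = \bigl((I_X)_1+(I_Y)_1\bigr) \cdot S_1 \subseteq (I_X)_2 + (I_Y)_2$, the opposite inclusion being automatic. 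Dually this identity reads $\Sigma_Y^* \cap \RR[X]_2^* = \Sigma_{X\cap Y}^*$. Writing $B := b_1+b_2 = (\ell_1+\ell_2) - (a_1+a_2)$, we see $B \in \RR[X]_2^*$, hence $B \in \Sigma_{X\cap Y}^*$. Now $\Sigma_{X\cap Y}^*$ is a face of $\Sigma_Y^*$ because it coincides with the intersection of $\Sigma_Y^*$ with the face $\{M \in S_+ : (I_{X\cap Y})_1 \subseteq \ker M\}$ of $S_+$, and intersecting a face of an ambient cone with a subcone produces a face of that subcone. The face property forces $b_1, b_2 \in \Sigma_{X\cap Y}^* \subseteq \Sigma_X^*$, and therefore $\ell_i = a_i+b_i \in \Sigma_X^*$.

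I expect the main obstacle to be the ideal identity $(I_X)_2 + (I_Y)_2 = (I_{X\cap Y})_2$, which is exactly where the linear joining hypothesis is genuinely used. Once it is in hand, the rest of the argument is a clean combination of convex duality and the general principle that faces of the positive semidefinite cone restrict to faces of any spectrahedral subcone. The identity itself should reduce to a short computation in a basis of $S_1$ adapted to the decomposition $\Span(X\cup Y) = \Span(X) + \Span(Y)$ with common part $\Span(X) \cap \Span(Y)$.
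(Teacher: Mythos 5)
Your argument is correct. The first half (the convex hull statement) is the same as the paper's: apply Theorem~\ref{thm:linearjoins} to get $\Sigma_{X\cup Y}=\pi_X^{-1}(\Sigma_X)\cap\pi_Y^{-1}(\Sigma_Y)$ and dualize; your extra remark that the sum $\Sigma_X^*+\Sigma_Y^*$ is already closed (because both summands sit inside the pointed cone $S_+$, so $\Sigma_X^*\cap(-\Sigma_Y^*)=\{0\}$) is a worthwhile detail the paper leaves implicit. The face statement, however, is where you genuinely diverge. The paper exhibits $\Sigma_X^*$ as an \emph{exposed} face: it is cut out by a single sum of squares supported on $Y$ that vanishes on $\Span(X)\cap\Span(Y)$ and has rank $\dim\Span(Y)-\dim(X\cap Y)$. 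You instead verify the face condition directly: decompose $\ell_i=a_i+b_i$ via the first part, observe that $b_1+b_2$ lies in $S_+\cap\bigl(I(X)_2+I(Y)_2\bigr)^\perp$, and use the identity $I(X)_2+I(Y)_2=I(X\cap Y)_2$ --- which is exactly where linear joining enters, via $I(X\cap Y)_1=I(X)_1+I(Y)_1$ for the linear space $X\cap Y$ --- to conclude $b_1+b_2\in\Sigma_{X\cap Y}^*$. Since $\Sigma_{X\cap Y}^*$ is the intersection of $\Sigma_Y^*$ with a face of $S_+$ (the forms whose kernel contains $I(X\cap Y)_1$), it is a face of $\Sigma_Y^*$, forcing $b_1,b_2\in\Sigma_{X\cap Y}^*\subseteq\Sigma_X^*$. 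Each route has its advantages: the paper's is shorter and yields the stronger conclusion that the face is exposed, but its verification that the proposed quadric exposes exactly $\Sigma_X^*$ is left to the reader; yours is longer but every step is elementary convex duality plus one transparent ideal-theoretic identity, and it isolates precisely where the hypothesis $X\cap Y=\Span(X)\cap\Span(Y)$ is used in the face statement.
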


\begin{proof}
This follows from duality in convex geometry. We have the maps of real vector spaces
\includegraphics[width = \textwidth]{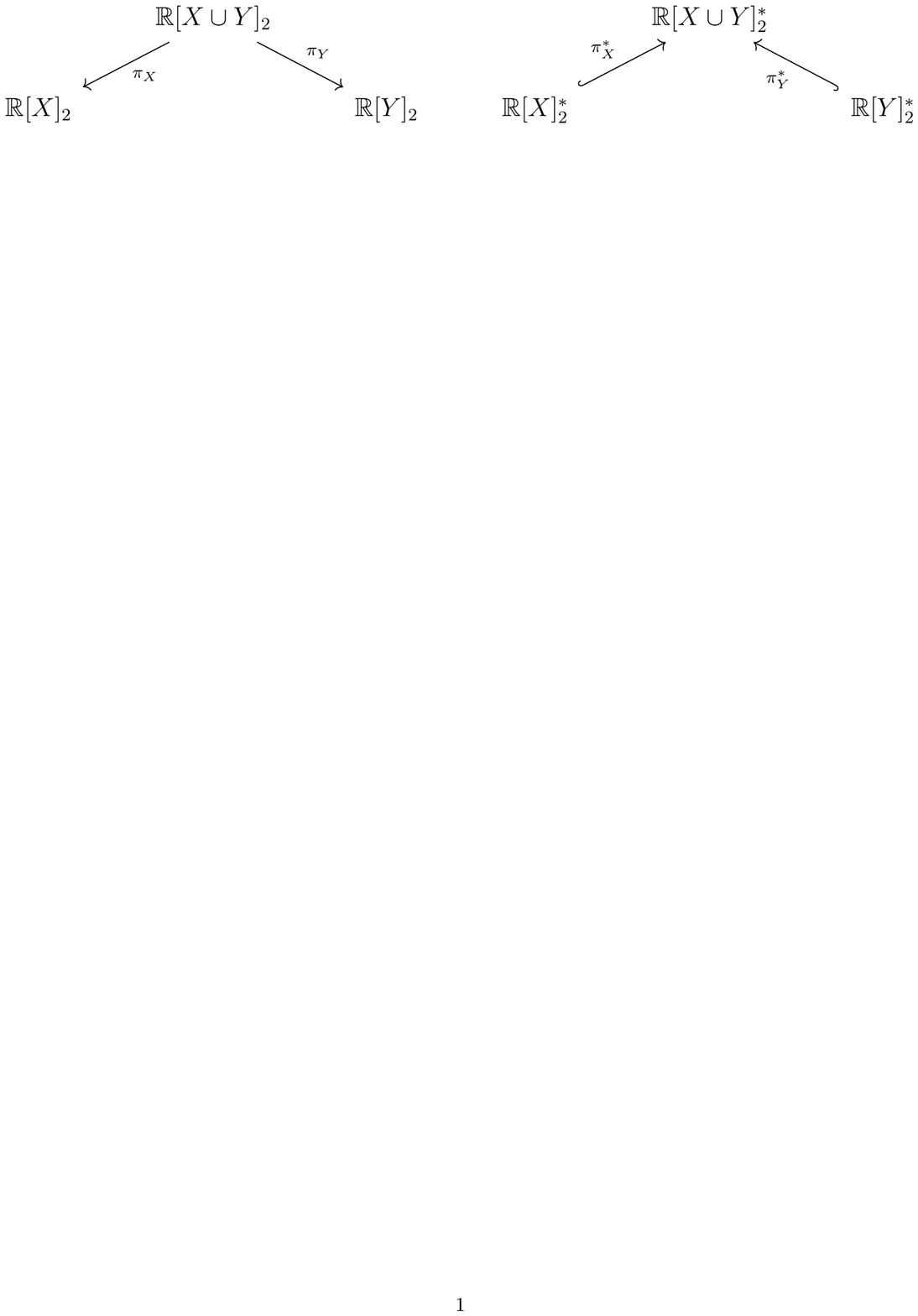}
The dual convex cone to $\pi_X^\ast(\Sigma_X^\ast)$ is the cone
\[
\{ q\in\RR[X\cup Y]_2 \colon \; \pi_X(q)\in \Sigma_X \} = \pi_X^{-1}(\Sigma_X).
\]
So by general duality in convexity, the claim that $\Sigma_{X\cup Y}^\ast$ is the conic hull of $\Sigma_X^\ast\cup \Sigma_Y^\ast$ is equivalent to the statement $\Sigma_{X\cup Y} = \pi_X^{-1}(\Sigma_X)\cap \pi_Y^{-1}(\Sigma_Y)$, which is proved in Theorem \ref{thm:linearjoins}. The fact that $\Sigma_X^\ast$ is a face of $\Sigma_{X\cup Y}^\ast$ then follows because it can be explicitly exposed by a sum of squares on $Y$, whose restriction to $\Span(X)\cap \Span(Y)=X\cap Y$ is $0$ and whose rank is equal to $\dim(\Span(Y)) - \dim(X\cap Y)$.
\end{proof}

The linear joining of schemes translates on the spectrahedral side to intertwining of cones introduced by Hildebrand \cite{Hildebrand}.

\section{Applications}

In this section, we relate our results to positive semidefinite matrix completion and the truncated moment problem.

\subsection{Monomial Ideals and Matrix Completion}\label{MatrixCompl}
 We will fix a simple graph $G=([n],E)$ on $n$ vertices.
This graph encodes a coordinate projection $\pi_G$ on the vector space $S^n$ of real symmetric $n\times n$ matrices 
\[
\pi_G\ \colon \; S^n \to \RR^n \oplus \RR^E, \  \pi_G(a_{ij}) = (a_{ii}\ \colon \; i\in [n]) \oplus (a_{ij}\ \colon \; \{i,j\}\in E).
\]
In other words, we project a matrix on its diagonal and its entries indexed by edges in the graph. We think of the image of a matrix under this projection $\pi_G$ as a partially specified matrix, i.e.~we get to complete it in the entries indexed by non-edges of the graph.
The semidefinite matrix completion problem asks which partial matrices can be completed to positive semidefinite matrices. Geometrically, we want to understand the image of the cone of {\psd} matrices under the projection $\pi_G$ in terms of the graph $G$.
In a more refined version of this completion problem, we might also ask for {\psd} completions satisfying additional rank constraints.

An obvious necessary condition for $\pi_G(a_{ij})$ to be in the image of the {\psd} cone is that all completely specified symmetric submatrices in the partial matrix are {\psd} (Paulsen-Power-Smith call this property \emph{partially positive} in \cite{PPSMR1005860}). 

\begin{exm}
Let $C_4 = ([4],\{ \{1,2\}, \{2,3\},\{3,4\},\{4,1\}\})$ be the four cycle. Then we would like to complete matrices of the form
\[
\begin{pmatrix}
a_{11} & a_{12} & \ast   & a_{14} \\
a_{12} & a_{22} & a_{23} & \ast   \\
\ast   & a_{23} & a_{33} & a_{34} \\
a_{14} & \ast   & a_{34} & a_{44}
\end{pmatrix},
\]
where the entries $a_{ij}$ are given and we get to choose the entries marked with a $\ast$. The completely specified symmetric sub-matrices correspond to the cliques in the graph, i.e.~we have four completely specified $2\times 2$ symmetric matrices (and, of course, the diagonal entries, which are the specified $1\times 1$ submatrices).
\end{exm}

This particular matrix completion problem comes up in Gaussian graphical models in statistics, where the image of the cone of positive semidefinite matrices is known as the cone of sufficient statistics of the Gaussian graphical model \cite[section 3]{UhlerMR3014306}.

We view the image of the cone of positive semidefinite matrices under this coordinate projection as the cone of sums of squares on the scheme defined by the Stanley-Reisner ideal of the clique complex of the graph.

The graph defines a square-free monomial ideal $I_G\subset \RR[x_1,\dots,x_n]$ generated by quadrics, namely $I_G = \langle x_ix_j\colon \{i,j\}\not\in E\rangle$. The monomial $x_i x_j$ is in $I_G$ whenever $\{i,j\}$ is not an edge of $G$. The ideal $I_G$ is the Stanley-Reisner ideal of the clique complex of the graph $G$.
In our notation, $I_G$ is the edge ideal of the dual graph of $G$, which is the common convention in the literature on positive semidefinite matrix completion.

The subscheme $X_G$ of $\PP^{n-1}$ defined by $I_G$ is the union of all coordinate subspaces
\[
\Span(\{e_i \colon i\in K\})
\]
over the cliques (complete induced subgraphs) $K$ in $G$. The dimension of $X_G$ is therefore the clique number of $G$ minus $1$. The following observation translates the matrix completion problem into sums of squares on a subspace arrangement.

\begin{lemma}\label{lem:tautology}
The map of homogeneous coordinate rings from $\RR[x_1,\dots,x_n]$ to $\RR[X_G]$ restricted to degree $2$ is the coordinate projection on the space of symmetric matrices encoded by the graph as described above. The image of the cone of positive semidefinite matrices under this projection is the cone of sums of squares on the subspace arrangement $X_G$.
Furthermore, the restriction of a quadratic form represented by $A$ to an irreducible component $U_K =\Span(\{e_i \colon i\in K\})$ of $X_G$ determines the completely specified submatrix of $A$ indexed by the maximal clique $K$ of $G$ corresponding to $U_K$.
\end{lemma}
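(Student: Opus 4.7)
The plan is to prove the three assertions as a straightforward unraveling of definitions, in three short steps. First I would fix the standard identification of $S_2$ with the space of real symmetric $n\times n$ matrices via $Q_A(x) = x^T A x$, so that the basis $\{x_i^2\}\cup\{x_ix_j : i<j\}$ of $S_2$ corresponds to the entries $A_{ii}$ and $A_{ij}$ of a symmetric matrix $A$. Under this identification the degree-two component $(I_G)_2$ is spanned by the monomials $x_ix_j$ with $i\neq j$ and $\{i,j\}\notin E$, which is exactly the subspace of symmetric matrices whose diagonal and edge-indexed entries all vanish. Hence the quotient $S_2 \to R_2 = S_2/(I_G)_2$ is, via the induced isomorphism $R_2\cong\RR^n\oplus\RR^E$ that records precisely those entries, literally the projection $\pi_G$. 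This establishes the first claim.

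For the second assertion I would invoke Remark~\ref{rem:rank}: a class $\bar q\in R_2$ is a sum of squares of linear forms in $\RR[X_G]$ if and only if it admits a representative $Q\in S_2$ which is a sum of squares of linear forms in $S_1$. Since sums of squares in $S_1$ are exactly the elements of the positive semidefinite cone $S_+\subset S_2$, the cone of sums of squares on $X_G$ is precisely the image $\pi_G(S_+)$ of the PSD cone under the projection.

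For the third assertion, the inclusion $U_K\hookrightarrow\PP^{n-1}$ corresponds on coordinate rings to the surjection sending $x_j\mapsto 0$ for every $j\notin K$. Applied to $Q_A$, this yields the quadratic form on $U_K$ represented by the principal submatrix $A_K = (A_{ij})_{i,j\in K}$. Since $K$ is a clique of $G$, every pair $i,j\in K$ with $i\neq j$ lies in $E$, so each entry of $A_K$ is either a diagonal or an edge-indexed entry of $A$, i.e.~one of the coordinates recorded by $\pi_G(A)$; thus $A_K$ is completely specified. The main (and essentially only) obstacle is cosmetic: one must be careful with the factor of $2$ when identifying the coefficient of $x_ix_j$ with the matrix entry $A_{ij}$, and to match conventions between $\pi_G$ and the symmetrization of $A$, but no genuine mathematical difficulty arises.
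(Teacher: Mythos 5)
Your proposal is correct and follows exactly the route the paper intends: the paper's own proof is the single line ``This is immediate, see Remark~\ref{rem:rank},'' and your three steps simply unpack that remark together with the definitions of $I_G$ and $\pi_G$. Nothing is missing.
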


\begin{proof}
This is immediate, see Remark \ref{rem:rank}.
\end{proof}

Since every sum of squares is nonnegative on $X_G(\RR)$, this gives an obvious necessary condition on the image of the {\psd} cone, namely $\Sigma_{X_G}\subset P_{X_G}$. Concretely, this recovers the obvious necessary condition that every completely specified submatrix is {\psd} mentioned before.

Regularity of the arrangement $X_G$ of subspaces can be described in terms of the graph $G$. The first result in this direction, that we want to mention, is due to Fr\"oberg. A graph is \emph{chordal} if every cycle of length at least $4$ has a chord.

\begin{thm}[{\cite[Theorem 1]{FroMR1171260}}]
A square-free monomial ideal $I_G$ generated by quadrics is $2$-regular if and only if the corresponding graph $G$ is chordal.
\end{thm}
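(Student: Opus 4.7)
The plan is to combine the Eisenbud--Green--Hulek--Popescu classification of $2$-regular reduced schemes as linear joins of varieties of minimal degree (recalled in Section~\ref{Sec:TRreg2}) with the classical characterization of chordal graphs via running intersection orderings of their maximal cliques. The scheme $X_G$ decomposes as $X_G = \bigcup_K U_K$, where $K$ ranges over the maximal cliques of $G$ and $U_K = \Span\{e_i : i \in K\}$. Each component $U_K$ is a linear space, hence trivially a variety of minimal degree in its own span, so by the EGHP theorem $I_G$ is $2$-regular if and only if the components $U_K$ admit an ordering which makes $X_G$ linearly joined.

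Given an ordering $K_1,\ldots,K_r$ of the maximal cliques, the linear-join condition at step $j$ amounts to the equality
\[
\bigcup_{s<j} U_{K_j \cap K_s} \;=\; U_{K_j} \cap \bigcup_{s<j} U_{K_s} \;=\; \langle U_{K_j}\rangle \cap \Big\langle \bigcup_{s<j} U_{K_s}\Big\rangle \;=\; U_{K_j \cap \bigcup_{s<j} K_s}.
\]
A union of coordinate subspaces equals a single coordinate subspace precisely when one of them contains all the others, so the equality forces the \emph{running intersection property}: for every $j$ there exists $s^\ast < j$ with $K_j \cap K_s \subseteq K_j \cap K_{s^\ast}$ for every $s < j$. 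By the classical theorem of Fulkerson and Gross, such an ordering of the maximal cliques exists if and only if $G$ is chordal, which delivers the theorem.

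For the chordal direction I would give a direct inductive proof on $|V(G)|$: Dirac's theorem supplies a simplicial vertex $v$, so $K := N_G[v]$ is a maximal clique; one then verifies that $U_K \cap X_{G-v} = U_{N_G(v)} = \langle U_K \rangle \cap \langle X_{G-v} \rangle$ and appends $K$ at the end of the linear-join ordering of $X_{G-v}$, which is chordal by heredity. For the non-chordal direction, one shows that a chordless cycle of length $\geq 4$ obstructs the running intersection property at any two non-adjacent vertices of the cycle, since these lie in pairs of maximal cliques whose running intersection cannot be nested.

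The main obstacle is the non-chordal direction, where ruling out every possible clique ordering is combinatorially intricate. A cleaner alternative is to use the Hankel-index machinery of Section~\ref{SEC:BPF}: given a chordless $k$-cycle $v_1 v_2 \cdots v_k v_1$ in $G$, the $k$ coordinate lines $L_i = U_{\{v_i, v_{i+1}\}}$ lie in a $\PP^{k-1}$, and a generic choice of real points $p_i = e_{v_i} + \lambda_i e_{v_{i+1}} \in L_i(\RR)$ subject to the single scalar constraint $\prod_i \lambda_i = (-1)^k$ (achievable by $\lambda_i = 1$ when $k$ is even, and by $\lambda_1 = -1$, $\lambda_i = 1$ otherwise when $k$ is odd) produces $k$ real points spanning only a $\PP^{k-2}$ while lying in linearly general position within it. Theorem~\ref{Thm: deathRay} then yields an extreme ray of $\Sigma_{X_G}^\ast$ of rank $k-2 \geq 2$, so $\eta(X_G) < \infty$ and Corollary~\ref{Thm:JumpNumber} forces $I_G$ to fail $2$-regularity. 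Some care is needed to guarantee that the linear span of the $p_i$ meets $X_G$ in a zero-dimensional scheme, which can be arranged by perturbing the $\lambda_i$ generically subject to the cyclic constraint.
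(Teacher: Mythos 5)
Your argument is correct, but it is worth noting that the paper does not prove this statement at all: it is quoted verbatim from Fr\"oberg \cite{FroMR1171260}, and the only in-text derivation offered is the remark that it follows from the Eisenbud--Green--Hulek--Popescu restriction theorem \cite[Theorem 2.1]{EGHPMR2188445} ($I_G$ satisfies $N_{2,p}$ iff $G$ has no induced cycle of length between $4$ and $p+2$, so $2$-regularity, i.e.\ $N_{2,p}$ for all $p$, is equivalent to chordality). Your route is genuinely different and self-contained modulo two big inputs: the geometric classification of $2$-regular reduced schemes as linearly joined sequences of varieties of minimal degree \cite{EGHP}, and the Fulkerson--Gross characterization of chordal graphs by running intersection orderings of maximal cliques. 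The translation in the middle is exactly right: the irreducible components of $X_G$ are the coordinate subspaces $U_K$ for $K$ a maximal clique, each trivially of minimal degree in its span; $U_A\cap U_B=U_{A\cap B}$ and $\langle\bigcup_s U_{A_s}\rangle=U_{\bigcup_s A_s}$ reduce the linear-join condition at step $j$ to $\bigcup_{s<j}U_{K_j\cap K_s}=U_{K_j\cap\bigcup_{s<j}K_s}$, and irreducibility of coordinate subspaces forces one intersection to dominate, which is precisely the running intersection property. What your approach buys is a transparent combinatorial dictionary (linear join $=$ running intersection $=$ clique tree) that meshes with Section~\ref{Sec:TRreg2} and with the linear-joining results later in the paper; what the paper's citation-based route buys is the finer, graded information ($N_{2,p}$ for each $p$, not just $2$-regularity), which is what Theorem~\ref{thm:monomialconverse} actually needs. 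Two small remarks on your extras: the direct induction via simplicial vertices and the Hankel-index argument for the non-chordal direction are both redundant once Fulkerson--Gross is invoked, since that theorem already covers both implications; and the Hankel-index alternative, while valid (your cyclic constraint $\prod_i\lambda_i=(-1)^k$ is exactly the degeneration condition, and the paper uses the special case $\lambda_i=-1$ in the proof of Theorem~\ref{thm:monomialconverse}), is a heavy detour through Theorem~\ref{Thm: deathRay} and Corollary~\ref{Thm:JumpNumber} to establish a purely algebraic fact, and it only shows failure of $2$-regularity rather than pinpointing where the resolution fails to be linear.
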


In the literature on positive semidefinite matrix completion, a similar result was proved by Grone-Johnson-S\'a-Wolkowicz, see also Agler-Helton-Rodman-McCullough \cite{AHMRMR960140} and Paulsen-Power-Smith \cite{PPSMR1005860}.
\begin{thm}[{\cite[Theorem 2]{PSDComp}}]
Every nonnegative quadric on $X_G$ is a sum of squares if and only if the graph $G$ is chordal. Equivalently, every partial matrix $\pi_G(a_{ij})$ such that all completely specified symmetric submatrices are {\psd} can be completed to a {\psd} matrix if and only if the graph $G$ is chordal.
\end{thm}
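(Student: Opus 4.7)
The plan is to deduce this statement by combining three ingredients already established: the dictionary in Lemma~\ref{lem:tautology} identifying PSD completion with sums of squares on $X_G$, Fr\"oberg's theorem (which the paper has just quoted), and the corollary to Theorem~\ref{thm:2regmain} that equates $P_X=\Sigma_X$ with $2$-regularity for totally real reduced schemes. The statement has two biconditionals, so I will prove (a) ``every nonnegative quadric on $X_G$ is a sum of squares iff $G$ is chordal'' and (b) the PSD-matrix-completion reformulation, then observe that the two are equivalent via Lemma~\ref{lem:tautology}.

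For (a), first I would note that $X_G$ is totally real: each of its irreducible components is a coordinate subspace $U_K=\Span(\{e_i:i\in K\})$, whose real points $U_K(\RR)$ are Zariski dense, and these fit together to show $X_G(\RR)$ is dense in $X_G$. Moreover $X_G\neq \PP^{n-1}$ as soon as $G$ is not the complete graph (the complete-graph case is trivial, since then $I_G=0$, $X_G=\PP^{n-1}$, and every nonnegative quadric is a sum of squares anyway). Thus the corollary of Theorem~\ref{thm:2regmain} applies and says that $P_{X_G}=\Sigma_{X_G}$ if and only if $X_G$ is $2$-regular. By Fr\"oberg's theorem, the latter is equivalent to $G$ being chordal, which completes (a).

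For (b), I need to match the two formulations through Lemma~\ref{lem:tautology}. That lemma says the coordinate projection $\pi_G$ on $S^n$ corresponds to the quotient map $S_2\to \RR[X_G]_2$, and the image $\pi_G(S_+^n)$ of the PSD cone is exactly $\Sigma_{X_G}$. Therefore a partial matrix $\pi_G(a_{ij})$ admits a PSD completion if and only if the class it represents in $\RR[X_G]_2$ lies in $\Sigma_{X_G}$. On the other hand, the second half of Lemma~\ref{lem:tautology} identifies the completely specified submatrix indexed by a clique $K$ with the restriction of the quadric to the component $U_K$, so a partial matrix has all completely specified submatrices PSD iff the associated quadric is nonnegative on each $U_K(\RR)$, iff it lies in $P_{X_G}$. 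Hence ``every partially positive matrix has a PSD completion'' translates exactly to $P_{X_G}\subseteq \Sigma_{X_G}$, i.e.\ $P_{X_G}=\Sigma_{X_G}$ (the reverse inclusion is automatic). Thus (b) reduces to (a).

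There is not really a hard step here; the work is in assembling the dictionary cleanly. The one subtlety worth double-checking is the identification in Lemma~\ref{lem:tautology} when $G$ has isolated vertices or when a partial matrix lies on the boundary (so that ``PSD'' rather than ``positive definite'' is at issue); these are handled because Theorem~\ref{thm:2regmain} and its corollary concern the closed cones $P_X$ and $\Sigma_X$, which coincide with their closures for totally real reduced schemes. With that caveat noted, the proof is a one-line invocation: chordality $\Leftrightarrow$ $2$-regularity of $I_G$ (Fr\"oberg) $\Leftrightarrow$ $P_{X_G}=\Sigma_{X_G}$ (corollary of Theorem~\ref{thm:2regmain}) $\Leftrightarrow$ universal PSD completion (Lemma~\ref{lem:tautology}).
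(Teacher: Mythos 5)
Your proposal is correct and is exactly the derivation the paper intends: it states this theorem as a citation of Grone et al., but the surrounding text and the subsequent corollary make clear that it is recovered precisely by combining Lemma~\ref{lem:tautology} (the dictionary between partial matrices and quadrics on $X_G$), Fr\"oberg's theorem, and the corollary of Theorem~\ref{thm:2regmain} equating $P_X=\Sigma_X$ with $2$-regularity for totally real reduced schemes. Your handling of the side conditions (total reality of $X_G$, the complete-graph case, and closedness of the cones) is also consistent with the paper.
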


Combining these two theorems with our result on $2$-regularity (Theorem \ref{thm:2regmain}) and the classification of reduced small schemes by Eisenbud-Green-Hulek-Popescu \cite{EGHP}, we get the equivalence of all three statements, which we can phrase in various ways.
\begin{cor}
Let $G$ be a simple graph, let $I_G$ be the Stanley-Reisner ideal of the clique complex of $G$, and let $X_G = \VV(I_G)$. The following statements are equivalent.
\begin{enumerate}[(a)]
 \item The ideal $I_G$ is $2$-regular.
 \item The subspace arrangement $X_G$ is linearly joined.
 \item Every quadric nonnegative on $X_G$ is a sum of squares in $\RR[X_G]$.
 \item Every partial matrix $\pi_G(a_{ij})$ such that all completely specified symmetric submatrices are {\psd} can be completed to a {\psd} matrix.
 \item The graph $G$ is chordal.
\end{enumerate}
\end{cor}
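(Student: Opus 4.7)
The plan is to assemble the corollary from the results established just prior: Theorem~\ref{thm:2regmain}, Lemma~\ref{lem:tautology}, Fr\"oberg's theorem, the Grone-Johnson-S\'a-Wolkowicz theorem, and the classification of reduced $2$-regular schemes by Eisenbud-Green-Hulek-Popescu in \cite{EGHP}. The preliminary observation is that $X_G$ is a closed totally real reduced scheme: its irreducible components are the coordinate subspaces $\Span(e_i\colon i\in K)$ indexed by the maximal cliques $K$ of $G$, each spanned by real points, so $X_G(\RR)$ is Zariski dense in $X_G$. This ensures that Theorem~\ref{thm:2regmain} is applicable and that $\Sigma_{X_G}$ and $P_{X_G}$ are closed convex cones in the finite-dimensional space $\RR[X_G]_2$.

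Next, I would dispatch the quick equivalences by quotation. The equivalence (a)$\iff$(e) is exactly Fr\"oberg's theorem, and (d)$\iff$(e) is exactly the Grone-Johnson-S\'a-Wolkowicz theorem, both stated immediately above the corollary. The equivalence (c)$\iff$(d) is the content of Lemma~\ref{lem:tautology}: the projection $\pi_G$ is identified with the degree-two part of the quotient map $\RR[x_1,\dots,x_n]\to \RR[X_G]$, so the image of the PSD cone under $\pi_G$ is exactly $\Sigma_{X_G}$, while the cone of partial matrices whose maximal completely specified submatrices are PSD coincides with $P_{X_G}$. Here one uses that $X_G(\RR)$ is the union over maximal cliques $K$ of the real coordinate subspaces $U_K(\RR)$, so nonnegativity of a quadric on $X_G(\RR)$ is equivalent to positive semidefiniteness of each principal submatrix indexed by a maximal clique.

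For (a)$\iff$(c), I would apply Theorem~\ref{thm:2regmain} to $X_G$ to obtain $\Sigma_{X_G}^{*}=P_{X_G}^{*}$ if and only if $X_G$ is $2$-regular, and then use biduality of closed convex cones in $\RR[X_G]_2$ to rewrite this as $\Sigma_{X_G}=P_{X_G}$, which is exactly condition (c). For (a)$\iff$(b), I would invoke \cite[Theorem~0.4]{EGHP}: a reduced scheme is $2$-regular if and only if it is a linear join of varieties of minimal degree. Since every irreducible component of $X_G$ is a linear space, and a linear space is trivially a variety of minimal degree, this statement specialises to the assertion that $X_G$ is $2$-regular if and only if $X_G$ is linearly joined.

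No serious obstacle is expected; the corollary mostly serves to package previously established results in the special case of square-free monomial ideals generated by quadrics. The one step that requires more than a direct citation is the bookkeeping in (c)$\iff$(d), which must account for the componentwise nature of nonnegativity on the subspace arrangement $X_G$ and its matching with positive semidefiniteness of the maximal completely specified submatrices of the partial matrix $\pi_G(a_{ij})$.
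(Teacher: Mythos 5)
Your proposal is correct and follows essentially the same route as the paper, which likewise assembles the corollary from Fr\"oberg's theorem, the Grone--Johnson--S\'a--Wolkowicz theorem (whose statement in the paper already covers both (c) and (d)), Lemma~\ref{lem:tautology}, Theorem~\ref{thm:2regmain} together with closedness of the cones for biduality, and the Eisenbud--Green--Hulek--Popescu classification for (a)$\iff$(b). No gaps.
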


\begin{exm}
For the four cycle $C_4 = ([4],\{ \{1,2\}, \{2,3\},\{3,4\},\{4,1\}\})$ mentioned before, there is a matrix such that all four completely specified $2\times 2$ matrices are {\psd}, but which cannot be completed to a {\psd} $4\times 4$ matrix, because the four cycle is not chordal. An example is 
\[
\begin{pmatrix}
1 & 1 & \ast & -1 \\
1 & 1 & 1 & \ast  \\
\ast & 1 & 1 & 1  \\
-1 & \ast & 1 & 1 
\end{pmatrix}.
\]
A linear functional separating this matrix from the cone of sums of squares on $X_{C_4}$ is given below in Remark \ref{Rmk:extremeray}. If we add one more edge to the graph, we get a chordal graph. So if we specify one more entry, we can complete every matrix such that all symmetric submatrices are {\psd} to a {\psd} matrix.
\end{exm}

The Green-Lazarsfeld index of an arrangement of subspaces $X_G$ can also be described in terms of properties of the graph by a result due to Eisenbud-Green-Hulek-Popescu.

\begin{thm}[{\cite[Theorem 2.1]{EGHPMR2188445}}]
A square-free monomial ideal $I_G$ generated by quadrics satisfies property $N_{2,p}$ if and only if the graph has no induced cycle of length at most $p+2$.
\end{thm}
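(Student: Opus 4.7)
The natural tool is Hochster's formula, which for the Stanley--Reisner ideal $I_\Delta$ of a simplicial complex $\Delta$ on vertex set $[n]$ computes
\[
\beta_{i,j}(S/I_\Delta) = \sum_{W \subseteq [n],\; |W|=j} \dim_\RR \tilde H_{j-i-1}(\Delta_W;\RR),
\]
where $\Delta_W$ is the induced subcomplex on $W$. In our setting $\Delta = \mathrm{Cl}(G)$, so $\Delta_W = \mathrm{Cl}(G[W])$. The property $N_{2,p}$ asks that $\beta_{i,j}(S/I_G) = 0$ for $1 \le i \le p$ and $j \ge i+2$; substituting $k := j-i-1$ this translates to
\[
\tilde H_k\bigl(\mathrm{Cl}(G[W]);\RR\bigr) = 0 \qquad \text{for all } k \ge 1 \text{ and all } W \text{ with } k+2 \le |W| \le k+p+1.
\]
Necessity follows immediately: an induced cycle $C \subseteq G$ of length $m$ with $4 \le m \le p+2$ has no chord, so $\mathrm{Cl}(G[V(C)])$ equals $C$ as a $1$-complex, which is homeomorphic to $S^1$. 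Taking $k=1$ and $W=V(C)$ yields a nonzero class in the forbidden range, so $N_{2,p}$ fails.

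\textbf{Sufficiency via Mayer--Vietoris induction.} Assuming $G$ contains no induced cycle of length in $[4,p+2]$, I would prove the required vanishing by induction on $|W|$. Pick $v \in W$ and decompose $\mathrm{Cl}(G[W]) = \overline{\mathrm{st}}(v) \cup \mathrm{Cl}(G[W\setminus v])$. The closed star is a cone, hence contractible, and the intersection is the link $\mathrm{link}(v) = \mathrm{Cl}(G[N(v)\cap W])$. The Mayer--Vietoris sequence contains the segment
\[
\tilde H_k(\mathrm{Cl}(G[W\setminus v])) \to \tilde H_k(\mathrm{Cl}(G[W])) \to \tilde H_{k-1}(\mathrm{link}(v)) \xrightarrow{\iota_\ast} \tilde H_{k-1}(\mathrm{Cl}(G[W\setminus v])).
\]
The inductive hypothesis disposes of the leftmost term (since $|W\setminus v|$ remains in the admissible range at level $k$), so the middle term vanishes provided the inclusion-induced map $\iota_\ast$ is injective.

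\textbf{Main obstacle.} The heart of the proof is this injectivity statement: every $(k-1)$-cycle of the link that bounds in $\mathrm{Cl}(G[W\setminus v])$ must already bound in the link. In the first nontrivial case $k=2$, a class in $\tilde H_1(\mathrm{link}(v))$ is represented by an induced cycle $\gamma$ in $N(v)\cap W$; coning $\gamma$ through $v$ and splicing with a filling in $W\setminus v$ would extract an induced cycle of length at most $|W|-1 \le k+p$ in $G$ itself, contradicting the hypothesis. Making this extraction precise for all $k$ and controlling the length of the extracted cycle uniformly is the technical content of the Eisenbud--Green--Hulek--Popescu argument; it is cleanest to run the induction in parallel with the $p$-smallness property of Theorem~\ref{thm:pBp}, which provides exactly the combinatorial mechanism for converting a failure of $\iota_\ast$-injectivity into a short induced cycle. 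An alternative route recasts the desired vanishing as a partial Cohen--Macaulay condition on the Alexander dual complex and applies Reisner's criterion to the complement graph.
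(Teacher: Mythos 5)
First, a framing remark: the paper does not prove this statement at all --- it is imported verbatim from Eisenbud--Green--Hulek--Popescu \cite[Theorem 2.1]{EGHPMR2188445} --- so there is no internal proof to compare against and your proposal must stand on its own. Your translation through Hochster's formula is correct, and the necessity direction is complete: an induced $m$-cycle with $4\le m\le p+2$ gives a subset $W$ with $\widetilde H_1(\mathrm{Cl}(G[W]);\RR)\neq 0$ and $|W|=m$ in the forbidden range, so $\beta_{m-2,m}(S/I_G)\neq 0$. (One must read ``induced cycle'' as having length at least $4$; triangles are cliques and contribute nothing.)

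The sufficiency direction, however, has a genuine gap as written: you set up the Mayer--Vietoris sequence correctly and then declare the injectivity of $\iota_\ast$ to be ``the technical content of the Eisenbud--Green--Hulek--Popescu argument,'' leaving the central step unproved and gesturing at two alternative routes. A proof that defers its key step to the theorem's original source is not a proof. That said, the gap is considerably smaller than you suggest, and your own induction nearly closes it. Run the induction on $|W|$, proving $\widetilde H_k(\mathrm{Cl}(G[W]))=0$ simultaneously for all $k\ge 1$ with $|W|\le k+p+1$. For $k\ge 2$ no injectivity is needed at all: the source of $\iota_\ast$ is $\widetilde H_{k-1}(\mathrm{Cl}(G[N(v)\cap W]))$, and since $|N(v)\cap W|\le |W|-1\le (k-1)+p+1$, this group already vanishes by the inductive hypothesis applied to the link. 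The only case requiring an argument is $k=1$, where one must show that any $a,b\in N(v)\cap W$ lying in the same connected component of $G[W\setminus v]$ already lie in the same component of $G[N(v)\cap W]$. If not, choose such a pair minimizing the distance $d$ in $G[W\setminus v]$ and a shortest path $P$ between them; then $d\ge 2$ (adjacent vertices of $N(v)\cap W$ share a component of the link), $P$ is induced, and minimality forces every internal vertex of $P$ to avoid $N(v)$, so $P\cup\{v\}$ is an induced cycle of length $d+2$ with $4\le d+2\le |W|\le p+2$, contradicting the hypothesis. Inserting this paragraph makes your argument complete; the proposed detours through $p$-smallness or Alexander duality and Reisner's criterion are unnecessary.
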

This theorem implies Fr\"oberg's Theorem on $2$-regularity.
It also allows us to prove the converse to Theorem \ref{Thm:JumpNumberpBp} in the case of monomial ideals. 

\begin{thm}\label{thm:monomialconverse}
Let $G$ be a simple graph and let $I_G$ be the Stanley-Reisner ideal of the clique complex of $G$ and $X_G = \VV(I_G)$ the subspace arrangement. The Hankel index of $X_G$ is the smallest $p\geq 2$ such that $G$ contains an induced cycle of length $p+2$. 
\end{thm}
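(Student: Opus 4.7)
\bigskip

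\textbf{Proof plan for Theorem~\ref{thm:monomialconverse}.} Let $p_0$ denote the smallest $p \geq 2$ such that $G$ contains an induced cycle of length $p+2$ (with $p_0 = \infty$ if no such cycle exists, i.e., if $G$ is chordal), and equivalently set $k = p_0 + 2$ to be the length of the shortest induced cycle of $G$. The strategy is to establish the lower bound $\eta(X_G)\geq p_0$ from known homological results and property $N_{2,p}$, and then to produce a matching upper bound by direct construction of a low-rank extreme ray via Theorem~\ref{Thm: deathRay}. The chordal case ($p_0 = \infty$) is handled separately: by Fr\"oberg's theorem $I_G$ is then $2$-regular, so Corollary~\ref{cor:reg} gives $\eta(X_G) = \infty$.

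For the lower bound, I would invoke the Eisenbud--Green--Hulek--Popescu Theorem (\cite[Theorem 2.1]{EGHPMR2188445}) quoted above: $I_G$ satisfies $N_{2,p}$ precisely when $G$ has no induced cycle of length $\leq p+2$. By the minimality of $p_0$, the ideal $I_G$ satisfies $N_{2,p_0 - 1}$ but not $N_{2,p_0}$, so the Green--Lazarsfeld index of $X_G$ equals $p_0 - 1$. Corollary~\ref{Thm:JumpNumber} then yields $\eta(X_G) \geq p_0$.

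For the upper bound, fix an induced $k$-cycle on vertices $i_1,\dots,i_k$ (cyclically ordered). Consider the projective subspace $V := \langle e_{i_1},\dots,e_{i_k}\rangle \cong \PP^{k-1}$ of $\PP^{n-1}$. Because the induced subgraph on $\{i_1,\dots,i_k\}$ is exactly $C_k$ (no chords), the intersection $V \cap X_G$ coincides with the cycle subspace arrangement $X_{C_k}$ sitting inside $V$, i.e.\ the union of the $k$ coordinate lines $L_{j} := \langle e_{i_j}, e_{i_{j+1}}\rangle$. Now choose a generic real hyperplane $H$ of $V$, defined by a linear form $\sum_{j=1}^k c_j x_{i_j}$ with all $c_j \neq 0$. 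Each line $L_j$ meets $H$ in the single real point $q_j := c_{j+1} e_{i_j} - c_j e_{i_{j+1}}$, so $H \cap X_G = H \cap X_{C_k} = \{q_1,\dots,q_k\}$ is zero-dimensional and consists of $k = (k-2) + 2 = p_0 + 2$ real reduced points. Since $H$ has projective dimension $k - 2 = p_0$ in $\PP^{n-1}$, Theorem~\ref{Thm: deathRay} immediately gives $\eta(X_G) \leq p_0$.

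Combining the two bounds yields $\eta(X_G) = p_0$, as claimed. I do not anticipate a serious obstacle here: the only subtle point is verifying that $V \cap X_G = X_{C_k}$, which uses \emph{induced}-ness of the cycle in an essential way, and checking that a generic $H$ produces exactly $k$ distinct real points with zero-dimensional intersection; both are straightforward from the combinatorial description of $X_G$ via the Stanley--Reisner correspondence and the explicit line parameterization above.
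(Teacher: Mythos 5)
Your proposal is correct and follows essentially the same route as the paper: the lower bound via \cite[Theorem 2.1]{EGHPMR2188445} together with Corollary~\ref{Thm:JumpNumber}, and the upper bound by restricting to the coordinate subspace of a shortest induced cycle, slicing with a hyperplane to obtain $p+2$ real points spanning a $p$-dimensional space, and applying Theorem~\ref{Thm: deathRay}. The only cosmetic difference is that you use a generic hyperplane $\sum c_j x_{i_j}$ where the paper takes the explicit one $x_1+\cdots+x_m$; both yield the same configuration of points.
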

\begin{proof} By assumption, the graph $G$ does not contain any induced cycle of length $\leq p+1$. By~\cite[Theorem 2.1]{EGHPMR2188445}, we conclude that the Green-Lazarsfeld index $\alpha(X)\geq p-1$ and therefore that $\eta(X)\geq p$ by Theorem~\ref{Thm:JumpNumberpBp}. Let $m=p+2$ and relabel the vertices of $G$ so that a cordless cycle of minimal length in $G$ is given by $([m],\{ \{1,2\},\{2,3\},\dots,\{m,1\}\})$. We restrict our attention to the space $\PP^{m-1} = \VV(x_{m+1},x_{m+2},\dots,x_n)\subset \PP^{n-1}$. The intersection of $X_G$ with this linear subspace is the variety corresponding to the induced cycle by choice of the labels of the vertices. So it is the union of $m$ projective lines
\[
Z = \VV(x_3,x_4,\dots,x_m) \cup \VV(x_4,x_5,\dots,x_m,x_1) \cup \dots \cup \VV(x_1,x_2,\dots,x_{m-2}) \cup \VV(x_2,x_3,\dots,x_{m-1}).
\]
We intersect $Z$ with the hyperplane $H=V(x_1+\dots +x_m)$ obtaining a set of $m$ points $P_1,\dots, P_m$,
\[
Z\cap H = \{[1,-1,0,0,\dots,0],[0,1,-1,0,\dots,0],\dots,[0,\dots,0,1,-1],[-1,0,\dots,0,1]\}.
\]
which span a projective subspace of dimension $m-2=p$. By Theorem~\ref{Thm: deathRay} we conclude that $\eta(X)\leq p$ proving the equality.  
\end{proof}

\begin{remark}\label{Rmk:extremeray} Applying the construction of Theorem~\ref{Thm: deathRay} with the points $P_i$ above one can prove that the following explicit $m\times m$ matrix 
\[
\begin{pmatrix}
\frac{m-2}{m-1} & -1 & 0 & 0 & \dots & 0 & 0 & \frac{1}{m-1} \\
-1 & 2 & -1 & 0 & \dots & 0 & 0 & 0 \\
0 & -1 & 2 & -1 & \dots & 0 & 0 & 0 \\
\vdots & 0 & -1 & 2 & \dots & 0 & 0 & 0\\

0 & 0 & 0 & 0 & \dots & -1 & 2 & -1 \\
\frac{1}{m-1} & 0 & 0 & 0 & \dots & 0 & -1 & \frac{m-2}{m-1}
\end{pmatrix}
\]
is an element of $\Sigma_X^*\setminus P_X^*$. Since its rank equals $\eta(X)$, it must therefore be an extreme ray of $\Sigma_X^*$.
\end{remark}

Now that we can identify the Hankel index of $\Sigma_{X_G}^\ast$, we can also describe the interior of $\Sigma_X$ by rank constraints. In other words, we can describe the set of all quadrics that can be lifted to a positive definite quadric.

\begin{thm}\label{thm:gaussian}
Let $G$ be a simple graph on $n$ vertices and let $m$ be the smallest length of a chordless cycle of $G$. A partial matrix $\pi_G(a_{ij})$ has a positive definite completion if and only if it has the following two properties.
\begin{enumerate}[(a)]
 \item Every completely specified symmetric submatrix is positive definite.
 \item The partial matrix $\pi_G(a_{ij})$ can be completed to a {\psd} matrix of rank greater than $n-m+2$.
\end{enumerate}
\end{thm}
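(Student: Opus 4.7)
The plan is to translate the existence of a positive definite completion of $\pi_G(a_{ij})$ into an interior condition on the Hankel spectrahedron of $X_G$, and then exploit the rank stratification of extreme rays given by Theorem~\ref{thm:monomialconverse}.

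First, I would establish that $\pi_G(a_{ij})$ admits a positive definite completion if and only if the corresponding quadric $q \in \RR[X_G]_2$ lies in the interior of $\Sigma_{X_G}$. The forward direction is a standard perturbation: if $A \succ 0$ lifts $q$, then $A + tS \succ 0$ for every $S \in S^n$ and small $t$, so $\pi_G(A+tS) = q + t\pi_G(S)$ covers a neighborhood of $q$ in $\Sigma_{X_G}$ by surjectivity of $\pi_G$. For the converse, interiority of $q$ gives $q - \epsilon \pi_G(I) \in \Sigma_{X_G}$ for small $\epsilon > 0$, and any PSD lift $A'$ of this quadric yields the positive definite completion $A' + \epsilon I$ of $q$. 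Convex duality then reformulates interiority as $\ell(q) > 0$ for every nonzero $\ell \in \Sigma_{X_G}^*$, equivalently for every extreme ray of $\Sigma_{X_G}^*$, and by Theorem~\ref{thm:monomialconverse} the Hankel index of $X_G$ equals $m-2$, so every such extreme ray has rank either $1$ or at least $m-2$.

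I would then match each rank regime to one of the two conditions. The rank $1$ extreme rays of $\Sigma_{X_G}^*$ are the point evaluations $\ev_p$ at real points $p$ of $X_G$; each such $p$ lies in the linear span of some maximal clique $K$, and for any completion $A$ of $q$ one has $\ev_p(q) = p^T A_K p$, where $A_K$ is the principal submatrix indexed by $K$. Strict positivity on all rank $1$ extreme rays is therefore precisely condition (a). For an extreme ray of rank $r \geq m-2$ represented by a PSD matrix $B$, the pairing $\ell_B(q) = \operatorname{trace}(AB)$ is independent of the choice of PSD completion $A$, because $B \in I(X_G)_2^{\perp}$ annihilates $\ker \pi_G$. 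Under (b) one may pick $A$ with $\operatorname{rank}(A) > n - m + 2 \geq n - r = \dim \ker B$, so the column space of $A$ cannot be contained in $\ker B$; hence $AB \neq 0$, and since $A$ and $B$ are both PSD this forces $\operatorname{trace}(AB) > 0$.

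Combining these observations, (a) and (b) together yield $\ell(q) > 0$ on every extreme ray of $\Sigma_{X_G}^*$, so a positive definite completion exists. The converse is immediate: a positive definite completion $A$ satisfies (a) trivially and has rank $n > n - m + 2$ (the condition being vacuous when $m = \infty$). The substantive point, and what makes the two conditions match the problem exactly, is the rank gap in Theorem~\ref{thm:monomialconverse}: the absence of extreme rays of rank strictly between $1$ and $m-2$ ensures that no obstruction slips between conditions (a) and (b).
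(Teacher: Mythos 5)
Your proposal is correct and follows essentially the same route as the paper: reduce to strict positivity of $\pi_G(a_{ij})$ on all extreme rays of $\Sigma_{X_G}^\ast$, match the rank-one rays (point evaluations on clique subspaces) to condition (a), and use the rank gap from Theorem~\ref{thm:monomialconverse} together with the rank bound in (b) to handle the remaining extreme rays. You merely make explicit two steps the paper leaves implicit --- the perturbation argument identifying positive definite completability with interiority of the image, and the trace computation showing $\operatorname{rank}(A) > n-m+2 \geq \dim\ker B$ forces $\operatorname{trace}(AB)>0$ --- both of which are correct.
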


The second condition is void if the graph is chordal. If $G$ is chordal, $m$ is at most $3$, so $n-m+2$ is at least $n-1$.

\begin{proof}
The partial matrix $\pi_G(a_{ij})$ lies in the interior of the image of the {\psd} cone $\Sigma_{X_G}$ if and only if all extreme rays of $\Sigma_{X_G}^\ast$ evaluate to a strictly positive number on $\pi_G(a_{ij})$. The extreme rays of $\Sigma_{X_G}^\ast$ either have rank $1$, in which case they are point evaluations at points in $X_G(\RR)$, or they have rank at least $m-2$ by Theorem \ref{thm:monomialconverse}. The requirement that all point evaluations be positive on the partial matrix corresponds to condition (a) in the claim, because $X_G$ is the union of subspaces corresponding to the cliques in the graph $G$, see Lemma \ref{lem:tautology}. The rank constraint in condition (b) implies that every {\psd} matrix of rank at least $m-2$ must evaluate to a positive number on this completion of rank greater than $n-m+2$, which shows that all other extreme rays are positive on $\pi_G(a_{ij})$, too.
\end{proof}

\begin{rem}
The bound $n-m+2$ on the rank of a completion in part (b) is best possible for any graph $G$. The existence of a cycle of length $m$ guarantees that $\Sigma_{X_G}^\ast$ has an extreme ray of rank $m-2$, see Remark~\ref{Rmk:extremeray}. Such an extreme ray certifies that some matrices with a completion of rank $n-m+2$ get mapped to the boundary of $\Sigma_{X_G}$.
\end{rem}

\begin{rem}
In statistics, the existence of a positive definite completion of a $G$-partial matrix is equivalent to the existence of the maximum likelihood estimator in the Gaussian graphical model, see \cite{Dempster}. Thus, Theorem~\ref{thm:gaussian} gives necessary and sufficient conditions for the existence of the maximum likelihood estimator.
\end{rem}

\subsection{Moment problems on projective varieties.}\label{SEC:Moments}

Let $X$ be a compact topological space and let $C(X,\RR)$ be its algebra of continuous functions with the supremum norm. Fix a filtration $\cF_1\subset \cF_2\subset \dots \subset \cF_k\subset\dots$ of vector subspaces $\mathcal{F}_k\subseteq C(X,\RR)$ such that $\bigcup \mathcal{F}_j\subseteq C(X,\RR)$ is dense. A finite Borel measure $\mu$ on $X$ defines a sequence of linear operators $\ell^{\mu}_k: \mathcal{F}_k\rightarrow \RR$ given by $\ell^{\mu}_k(f):=\int_X fd\mu$. The operator $\ell^{\mu}_k$ is called the moment of order $k$ of $\mu$ and the sequence of operators $(\ell^{\mu}_k)_{k\in \mathbb{N}}$ is called the sequence of moments of $\mu$. The following two basic problems capture the relationship between Borel measures and continuous functions on $X$,

\begin{enumerate}
\item {\it The general moment problem.} Characterize the sequences of operators $(m_k)_{k\in \NN}$ with $m_j:\mathcal{F}_j\rightarrow \RR$ such that there exists a Borel measure $\mu$ on $X$ with $\ell^{\mu}_j=m_j$ for all $j$.
\item {\it The truncated moment problem.} Given an integer $k$ and an operator $m_k: \mathcal{F}_k\rightarrow \RR$ does there exist a Borel measure $\mu$ on $X$ such that $\ell^{\mu}_k = m_k$?
\end{enumerate}

Typically, the set $X$ is a basic closed semialgebraic subset of $\mathbb{R}^n$, the vector spaces $\mathcal{F}_j$ are chosen to be the polynomials of degree at most $j$ in the variables $x_1,\dots, x_n$ and the moments of a measure $\mu$ are specified by giving the values $\ell^{\mu}_j(x^{\alpha})$ on all monomials $x^{\alpha}$ of degree at most $j$, which in the literature are known as moment sequences. The truncated moment problem has many applications, e.g.~in polynomial optimization, probability, finance, and control, see \cite[Chapters~5-14]{Lasserre}.

In this subsection, we study the two problems above when $M$ is a projective scheme and the $\mathcal{F}_j$ are determined by homogeneous polynomials. Our main contribution is a novel sufficient condition for an affirmative answer to the truncated moment problem in this setting.
We begin by clarifying the chosen function spaces in the projective setting. Fix the standard inner product on $\RR^{n+1}$ and let $S\subseteq \RR^{n+1}$ be the unit sphere. Let $X\subseteq \PP^n$ be a scheme over $\RR$. The quotient map $\overline{q}: \wh X(\RR)\rightarrow X(\RR)\subseteq \PP^n(\RR)$ from the affine cone over $X$ to $X$ restricts to a continuous function $q: S\cap \wh X(\RR)\rightarrow X(\RR)$ which identifies antipodal points.
By a polynomial function $f$ on $X(\RR)$ we mean a function of the form $f(y)=F(q^{-1}(y))$ where $F$ is a form {\it of even degree} in the homogeneous coordinate ring $R$ of $X$. We let $R_{\rm even}:=\bigoplus_{k\geq 0} R_{2k}$ and define $\phi: R_{\rm even}\rightarrow C(X(\RR),\RR)$ as the evaluation homomorphism which maps a form $F$ of even degree into its corresponding continuous function on $X(\RR)$. We consider the filtration by the vector spaces $\cF_j = \phi(R_{2j})\subset C(X(\RR),\RR)$. Note that this is, in fact, a filtration because we have $\phi(\|x\|^2 f) = \phi(f)$ for all homogeneous polynomials $f\in R_{2j}$, where $\|x\|^2 = \sum_{i=0}^n x_i^2$. This gives an inclusion $\cF_j \subset \cF_{j+1}$. The union $\bigcup \cF_j$ is dense in $C(X(\RR),\RR)$ by the Stone-Weierstra{\ss} Theorem.

\begin{proposition}
Let $X\subseteq \PP^n$ be a reduced real scheme. 
A sequence of operators $m_k: \mathcal{F}_k\rightarrow \RR$ such that $m_{j+1}$ and $m_j$ agree on $\mathcal{F}_{j}$ for all $j\geq 0$ is the sequence of moments of a Borel measure $\mu$ on $X(\RR)$ if and only if for every $j$ and $f\in \mathcal{F}_j$ which is nonnegative on $X(\RR)$ we have $m_j(f)\geq 0$. Moreover, the sequence $(\ell_k)_{k\in \mathbb{N}}$ determines $\mu$ uniquely. 
\end{proposition}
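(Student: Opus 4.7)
The plan is to prove this via the classical Riesz--Haviland paradigm, extending the compatible family $(m_k)$ to a positive linear functional on $C(X(\RR),\RR)$ and invoking the Riesz--Markov representation theorem. Necessity is immediate: if $m_k = \ell^\mu_k$ for some Borel measure $\mu$, then for any $f \in \cF_k$ with $f \geq 0$ on $X(\RR)$ we have $m_k(f) = \int f\,d\mu \geq 0$. For sufficiency, the compatibility condition $m_{j+1}|_{\cF_j} = m_j$ assembles the family into a single linear functional $m \colon \cF \to \RR$ on the union $\cF := \bigcup_j \cF_j \subset C(X(\RR),\RR)$.

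The decisive observation is that the constant function $1$ lies in $\cF_0$ (since $\phi(1) = 1$), and that for any $f \in \cF$ with $\|f\|_\infty = c$ the functions $c \pm f$ are nonnegative on $X(\RR)$ and still lie in $\cF$. Applying the hypothesis to both of these yields the uniform bound $|m(f)| \leq m(1)\,\|f\|_\infty$. Hence $m$ extends uniquely by continuity to a bounded linear functional $\tilde m$ on the closure of $\cF$ in $C(X(\RR),\RR)$. Since $X(\RR)$ is compact (as a closed subset of $\PP^n(\RR)$) and the excerpt has already recorded that $\cF$ is dense in $C(X(\RR),\RR)$ by the Stone--Weierstra\ss{} theorem, $\tilde m$ is in fact defined on all of $C(X(\RR),\RR)$. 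Positivity transfers to this closure: if $g \geq 0$ is continuous and $(f_n) \subset \cF$ converges uniformly to $g$, then $f_n + \|f_n - g\|_\infty \geq 0$ lies in $\cF$, so $m(f_n) + \|f_n - g\|_\infty \cdot m(1) \geq 0$, and letting $n \to \infty$ gives $\tilde m(g) \geq 0$.

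The Riesz--Markov theorem for the compact Hausdorff space $X(\RR)$ now produces a unique finite positive Borel measure $\mu$ with $\tilde m(g) = \int g\,d\mu$, and restricting to $\cF_k$ recovers $m_k = \ell^\mu_k$. Uniqueness of $\mu$ is immediate from the uniqueness clause of Riesz--Markov combined with density of $\cF$, since any two representing measures induce the same continuous functional on $C(X(\RR),\RR)$. I anticipate no serious obstacle; the one bookkeeping point is verifying that $1 \in \cF_j$ for every $j$, which follows from the identity $\phi(\|x\|^2 f) = \phi(f)$ underlying the filtration (using that $\|x\|^2$ is strictly positive on the affine cone over $X(\RR)$ minus the origin, so the evaluation map $\phi$ on $R_{\rm even}$ is well defined).
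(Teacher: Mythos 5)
Your proof is correct and takes essentially the same route as the paper's: both assemble the compatible family $(m_k)$ into a single positive linear functional on the dense subalgebra $\bigcup_j \cF_j \subset C(X(\RR),\RR)$ containing the constants, and then represent it by a Borel measure. The only difference is that the paper delegates the extension-and-representation step to a cited theorem of Marshall, whereas you prove it directly via the bound $|m(f)|\leq m(1)\,\|f\|_\infty$, continuous extension, and the Riesz--Markov theorem.
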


\begin{proof}
This follows from the Riesz representation theorem: If $F\in R_{2j}$, then $F\|x\|^2\in R_{2(j+1)}$ and $\phi(F\|x\|^2)=\phi(F)$, so $\mathcal{F}_j\subseteq \mathcal{F}_{j+1}$. In particular, $A:=\bigcup \mathcal{F}_j$ is an algebra. It is immediate that this algebra contains the constants and separates points of $X(\RR)$ and therefore, by the Stone-Weierstra{\ss} Theorem, it is a dense subset of $C(X(\RR),\RR)$. Define $m: A\rightarrow \RR$ by $m(g)=m_j(g)$ for $g\in \mathcal{F}_j$. Our assumptions guarantee that $m$ is well defined and that $m$ is nonnegative on nonnegative elements of $A$. Since $X(\RR)$ is compact and the $\mathcal{F}_j$ contain the constants we conclude from~\cite[Theorem 3.1]{Mar} that there is a Borel measure $\mu$ on $X(\RR)$ such that $\ell_k^{\mu}=m_k$ for all $k\geq 0$. The uniqueness is immediate from the density of $A$.
\end{proof}

\begin{theorem}\label{Thm:moment}
Let $X\subseteq \PP^n$ be a reduced real scheme. 
An operator $m_k: \mathcal{F}_k\rightarrow \RR$ satisfies $m_k=\ell^{\mu}_k$ for some measure $\mu$ if and only if $\delta_k:=\ell_k\circ \phi: R_{2k}\rightarrow \RR$ is an element of $P_{X,2k}^*$. If $\delta_k \in \Sigma_{X,2k}^*$, the conclusion holds whenever ${\rm rank}(\delta_k)<\eta(Z)$, where $Z$ is the $k$-th Veronese re-embedding of $X$. 
\end{theorem}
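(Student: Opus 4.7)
The plan is to split the proof into the iff statement and the rank refinement, and to reduce the second part to the first via the Veronese re-embedding.

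For the iff statement, the forward direction is a direct unwinding of definitions: if $m_k = \ell^\mu_k$, then for $f\in R_{2k}$ with $\phi(f)\geq 0$ we have $\delta_k(f)=\int_{X(\RR)} \phi(f)\,d\mu\geq 0$, so $\delta_k\in P_{X,2k}^*$. For the converse, I would first verify that $\delta_k$ descends to a well-defined linear functional $m_k$ on $\mathcal{F}_k=\phi(R_{2k})$: if $f\in\ker\phi$, then $\pm f\geq 0$ on $X(\RR)$, so the hypothesis $\delta_k\in P_{X,2k}^*$ forces $\delta_k(\pm f)\geq 0$, hence $\delta_k(f)=0$. The functional $m_k$ is then positive on the cone of nonnegative elements of $\mathcal{F}_k$, and $\mathcal{F}_k$ contains the strictly positive function $1=\phi(\|x\|^{2k})$. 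Since $X(\RR)$ is compact, I would invoke the Riesz--Haviland (Marcel Riesz) extension theorem to extend $m_k$ to a positive linear functional on $C(X(\RR),\RR)$ and then apply the Riesz representation theorem to produce the desired Borel measure $\mu$.

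For the rank refinement, the key observation is that the $k$-th Veronese re-embedding $Z$ has coordinate ring $R(Z)$ with $R(Z)_d = R(X)_{kd}$ and satisfies $Z(\RR)\cong X(\RR)$. Under this identification one has $\Sigma_Z=\Sigma_{X,2k}$ and $P_Z=P_{X,2k}$, so $\delta_k$ may equivalently be viewed as an element of $\Sigma_Z^*$ and any Borel measure on $Z(\RR)$ representing it corresponds to the desired measure on $X(\RR)$. Hence it suffices to show $\delta_k\in P_Z^*$ and apply the first part of the theorem to $Z$.

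To establish $\delta_k\in P_Z^*$, I would use a Carath\'eodory decomposition $\delta_k=\sum_i c_i e_i$ with $c_i>0$ and $e_i$ extreme rays of the closed convex cone $\Sigma_Z^*$. The crucial elementary fact is that for positive semidefinite matrices $A_i$ and positive scalars $c_i$ one has $\ker(\sum c_i A_i)=\bigcap_i \ker(A_i)$, so $\mathrm{rank}(\delta_k)\geq \mathrm{rank}(e_i)$ for each $i$. By definition of the Hankel index, every extreme ray of $\Sigma_Z^*$ has rank either $1$ or at least $\eta(Z)$; the hypothesis $\mathrm{rank}(\delta_k)<\eta(Z)$ then forces every $e_i$ to have rank one. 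Since $Z$ (being a Veronese re-embedding, or by hypothesis on $X$ when $k=1$) is set-theoretically defined by quadrics, these rank-one extreme rays are point evaluations at real points of $Z(\RR)$, as noted in Section~\ref{SEC: mainThm1}. Thus $\delta_k$ is a conical combination of such point evaluations and lies in $P_Z^*=P_{X,2k}^*$, completing the reduction. I expect the main technical point to be the extension-theorem step, but the presence of $1\in\mathcal{F}_k$ and compactness of $X(\RR)$ make this routine; the rank step is a clean Carath\'eodory argument exploiting the definition of $\eta(Z)$.
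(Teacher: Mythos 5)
Your proposal is correct, and the forward implication and the rank refinement coincide with the paper's argument: the paper also decomposes $\delta_k$ into extreme rays of $\Sigma_{Z}^\ast$ and observes (phrased contrapositively) that any extreme ray of rank greater than one forces $\rank(\delta_k)\geq\eta(Z)$, which is exactly your kernel-intersection fact for sums of positive semidefinite forms. Where you genuinely diverge is the converse of the iff: the paper stays entirely finite-dimensional and uses the fact (cf.~\cite[Lemma 4.18]{BPTMR3075433}, quoted in Section~\ref{SEC: mainThm1}) that $P_{X,2k}^\ast$ is the conic hull of point evaluations at points of $X(\RR)$ --- the conic hull is already closed by compactness of $X(\RR)$ --- so $\delta_k$ is literally a finite conic combination of Dirac measures. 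You instead check that $\delta_k$ descends to a positive functional on $\cF_k$, apply the M.~Riesz extension theorem (valid since $1\in\cF_k$ gives $\cF_k+C_+(X(\RR))=C(X(\RR),\RR)$), and then the Riesz representation theorem. Both routes work; the paper's buys the stronger conclusion that the representing measure may be taken finitely atomic (relevant for the optimization applications) and avoids any infinite-dimensional machinery, while yours is self-contained modulo standard functional analysis and does not require knowing the extreme-ray structure of $P_{X,2k}^\ast$. Your parenthetical caveat that the rank-one extreme rays of $\Sigma_Z^\ast$ are evaluations at points \emph{of $Z(\RR)$} only when $Z$ is set-theoretically cut out by quadrics is a real subtlety, but it is one the paper's own proof glosses over in the same way, so it is not a gap relative to the paper.
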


\begin{proof}
If $F\in R_{2k}$ is nonnegative, then $\delta_k(F)=\ell(\phi(F)):=\int_X \phi(F)d\mu$ is obviously nonnegative. Conversely, an element $\delta_k\in P_{X,2k}^*$ is a conic combination of point evaluations at points of $Z(\RR)$, that is, a conic combination of Dirac delta measures at points of $Z(\RR)$ and in a particular comes from a Borel measure as claimed. If $\delta_{k}\in \Sigma_{X,2k}^*$ is not a conic combination of point evaluations, then it must be a conic combination of elements involving some extreme ray which does not have rank one. By definition of the Hankel index, such a ray must have rank at least $\eta(Z)$ and therefore $\delta_k$ must have rank at least $\eta(Z)$, proving the claim. 
\end{proof}

\begin{remark} In order to apply the previous theorem it is useful to have lower bounds for the Hankel index of the $k$-th Veronese re-embedding $Z$ of a given variety $X$. Using Corollary~\ref{Thm:JumpNumber}, such upper bounds can be derived from the Green-Lazarsfeld index of $Z$. This quantity can be obtained by calculating the minimal free resolution of the ideal of $Z$. However, such computations may be infeasible in practice even when the resolution of $X$ is well understood. 
A theorem of Park~\cite{Park} implies that if $X$ is $m$-regular, then $Z$ satisfies $N_{2,2k-m+1}$ for $\frac{m}{2}\leq k\leq m-2$ and $N_{2,k}$ for $k\geq m-1$.  In particular we conclude that the Hankel index of the Veronese re-embeddings of $X$ grow at least linearly, eventually.
\end{remark}

\begin{exm}
For projective toric surfaces, the length of the linear strand of the minimal free resolution is understood combinatorially in terms of the defining lattice polygon $P\subset \RR^2$ by results of Gallego-Purnaprajna, Hering, and Schenck \cite[Theorem 1.3]{GPMR1855753}, \cite{Hering}, \cite[Corollary 2.1]{SchenckMR2084071} . The toric surface $X_P$ has property $N_{2,p}$ if and only if the number of lattice points on the boundary of $P$ is at least $p+3$, (i.e.~$|\partial P\cap \ZZ^2| \geq p + 3$).
Hence, the Green-Lazarsfeld index of $X_P$ is $|\partial P \cap \ZZ^2| - 3$ and we get the lower bound $|\partial P \cap \ZZ^2| - 2$ on the Hankel index of the toric surface $X_P$ by Corollary \ref{Thm:JumpNumber}.
Theorem \ref{Thm:moment} implies that every
moment operator $m_k\colon \cF_k \to \RR$ of rank at most
$|\partial(kP)\cap\ZZ^2|-2$ can be represented by a convex combination
of point measures.
\end{exm}

\begin{exm}
If $P$ is the scaled $2$-simplex $P = \conv\{(0,0),(d,0),(0,d)\}$, then $X_P = \nu_d(\PP^2)$ is the $d$-th Veronese embedding of $\PP^2$. Its Green-Lazarsfeld index is $3d-3$ and the Hankel index is $3d-2$, see \cite{BGorMR3272733}. So every linear functional $\ell\in\Sigma_{X_P}^\ast$ which has rank at most $3d-3$ comes from a measure.
\end{exm}

\begin{exm}
The same bound can be applied to sparse polynomials. For example, take bihomogeneous forms in two sets of two variables of degree $(a,b)$. They correspond to linear forms on the toric surface defined by $P = \conv \{(0,0),(0,b),(a,0),(a,b)\}$. In this setup, Theorem \ref{Thm:moment} states that every linear functional $\ell \in \Sigma_{X_P}^\ast$ of rank at most $|\partial P\cap \ZZ^2| - 3 = 2a + 2b - 3$ comes from a measure.
\end{exm}



\begin{thebibliography}{10}

\bibitem{AHMRMR960140}
J.~Agler, J.~W. Helton, S.~McCullough, and L.~Rodman.
\newblock Positive semidefinite matrices with a given sparsity pattern.
\newblock In {\em Proceedings of the {V}ictoria {C}onference on {C}ombinatorial
  {M}atrix {A}nalysis ({V}ictoria, {BC}, 1987)}, volume 107, pages 101--149,
  1988.

\bibitem{BleNP}
G.~Blekherman.
\newblock Nonnegative polynomials and sums of squares.
\newblock {\em J. Amer. Math. Soc.}, 25(3):617--635, 2012.

\bibitem{BGorMR3272733}
G.~Blekherman.
\newblock Positive {G}orenstein ideals.
\newblock {\em Proc. Amer. Math. Soc.}, 143(1):69--86, 2015.

\bibitem{BIKV}
G.~Blekherman, S.~Iliman, M.~Junhke-Kubitzke, and M.~Velasco.
\newblock Gap vectors of real projective varieties.
\newblock {\em Adv. Math.}, 283:458--472, 2015.

\bibitem{BPTMR3075433}
G.~Blekherman, P.~A. Parrilo, and R.~R. Thomas, editors.
\newblock {\em Semidefinite optimization and convex algebraic geometry},
  volume~13 of {\em MOS-SIAM Series on Optimization}.
\newblock Society for Industrial and Applied Mathematics (SIAM), Philadelphia,
  PA; Mathematical Optimization Society, Philadelphia, PA, 2013.

\bibitem{BPSV}
G.~Blekherman, D.~Plaumann, R.~Sinn, and C.~Vinzant.
\newblock Low-rank sum of squares representation on varieties of minimal
  degree.
\newblock Preprint available at \url{http://arxiv.org/abs/1606.04387}, 2016.

\bibitem{BSV}
G.~Blekherman, G.~G. Smith, and M.~Velasco.
\newblock Sums of squares and varieties of minimal degree.
\newblock {\em J. Amer. Math. Soc.}, 29(3):893--913, 2016.

\bibitem{BochnakMR1659509}
J.~Bochnak, M.~Coste, and M.-F. Roy.
\newblock {\em Real algebraic geometry}, volume~36 of {\em Ergebnisse der
  Mathematik und ihrer Grenzgebiete (3) [Results in Mathematics and Related
  Areas (3)]}.
\newblock Springer-Verlag, Berlin, 1998.
\newblock Translated from the 1987 French original, Revised by the authors.

\bibitem{Brunsetal}
W.~Bruns, A.~Conca, and T.~R{\"o}mer.
\newblock Koszul homology and syzygies of {V}eronese subalgebras.
\newblock {\em Math. Ann.}, 351(4):761--779, 2011.

\bibitem{Dempster}
A.~P.~Dempster.
\newblock Covariance selection.
\newblock {\em Biometrics}, 28:157--175, 1972.

\bibitem{EisMR2103875}
D.~Eisenbud.
\newblock {\em The geometry of syzygies}, volume 229 of {\em Graduate Texts in
  Mathematics}.
\newblock Springer-Verlag, New York, 2005.
\newblock A second course in commutative algebra and algebraic geometry.

\bibitem{EH}
D.~Eisenbud, J.~Harris
\newblock A simpler proof of the Gieseker-Petri theorem on special divisors
\newblock {\em Invent. Math.}, 74: 269--280, 1983.
 
\bibitem{EGHPMR2188445}
D.~Eisenbud, M.~Green, K.~Hulek, and S.~Popescu.
\newblock Restricting linear syzygies: algebra and geometry.
\newblock {\em Compos. Math.}, 141(6):1460--1478, 2005.

\bibitem{EGHP}
D.~Eisenbud, M.~Green, K.~Hulek, and S.~Popescu.
\newblock Small schemes and varieties of minimal degree.
\newblock {\em Amer. J. Math.}, 128(6):1363--1389, 2006.

\bibitem{EHU}
D.~Eisenbud, C.~Huneke, and B.~Ulrich.
\newblock The regularity of {T}or and graded {B}etti numbers.
\newblock {\em Amer. J. Math.}, 128(3):573--605, 2006.

\bibitem{FroMR1171260}
R.~Fr{\"o}berg.
\newblock On {S}tanley-{R}eisner rings.
\newblock In {\em Topics in algebra, {P}art 2 ({W}arsaw, 1988)}, volume~26 of
  {\em Banach Center Publ.}, pages 57--70. PWN, Warsaw, 1990.

\bibitem{GPMR1855753}
F.~J. Gallego and B.~P. Purnaprajna.
\newblock Some results on rational surfaces and {F}ano varieties.
\newblock {\em J. Reine Angew. Math.}, 538:25--55, 2001.

\bibitem{GLCurves}
M.~Green and R.~Lazarsfeld.
\newblock Some results on the syzygies of finite sets and curves.
\newblock {\em Comp. Math.}, 67(3):301--314, 1988.

\bibitem{PSDComp}
R.~Grone, C.~R. Johnson, E.~M. de~S{\'a}, and H.~Wolkowicz.
\newblock Positive definite completions of partial {H}ermitian matrices.
\newblock {\em Linear Algebra Appl.}, 58:109--124, 1984.

\bibitem{HanKwak}
K.~Han and S.~Kwak.
\newblock Analysis on some infinite modules, inner projection, and
  applications.
\newblock {\em Trans. Amer. Math. Soc.}, 364(11):5791--5812, 2012.

\bibitem{HarMR0463157}
R.~Hartshorne.
\newblock {\em Algebraic geometry}.
\newblock Springer-Verlag, New York-Heidelberg, 1977.
\newblock Graduate Texts in Mathematics, No. 52.

\bibitem{HeltonExtremePSD}
J.~Helton, S.~Pierce, and L.~Rodman.
\newblock The ranks of extremeal positive semidefinite matrices wth given
  sparsity pattern.
\newblock {\em SIAM J. Matrix Anal. Appl.}, 10(3):407--423, 1989.

\bibitem{Hering}
M.~Hering.
\newblock {\em Syzygies of Toric Varieties}.
\newblock Thesis available at \url{http://www.maths.ed.ac.uk/~mhering/},
\newblock 2006.

\bibitem{Hildebrand}
R.~Hildebrand.
\newblock Spectrahedral cones generated by rank 1 matrices.
\newblock {\em J. Global Optim.}, 64(2):349--397, 2016.

\bibitem{Lasserre}
J.~B.~Lasserre.
\newblock {\em Moments, positive polynomials and their applications}.
\newblock Imperial College Press, London, 2010.
\newblock Imperial College Press Optimization Series, No. 1.

\bibitem{LaurentSeriesParallel}
M.~Laurent.
\newblock The real positive semidefinite completion problem for series-parallel
  graphs.
\newblock {\em Linear Algebra Appl.}, 252:347--366, 1997.

\bibitem{LaurentSparsityOrder}
M.~Laurent.
\newblock On the sparsity order of the graph and its deficiency in chordality.
\newblock {\em Combinatorica}, 21:543--570, 2001.

\bibitem{LVMR3207690}
M.~Laurent and A.~Varvitsiotis.
\newblock A new graph parameter related to bounded rank positive semidefinite
  matrix completions.
\newblock {\em Math. Program.}, 145(1-2, Ser. A):291--325, 2014.

\bibitem{Mar}
M.~Marshall.
\newblock Approximating positive polynomials using sums of squares.
\newblock {\em Canad. Math. Bull.}, 46(3):400--418, 2003.

\bibitem{Park}
E.~Park.
\newblock On syzygies of {V}eronese embedding of arbitrary projective
  varieties.
\newblock {\em J. Algebra}, 322(1):108--121, 2009.

\bibitem{PPSMR1005860}
V.~I. Paulsen, S.~C. Power, and R.~R. Smith.
\newblock Schur products and matrix completions.
\newblock {\em J. Funct. Anal.}, 85(1):151--178, 1989.

\bibitem{RamanaGoldman}
M.~Ramana and A.~J. Goldman.
\newblock Some geometric results in semidefinite programming.
\newblock {\em J. Global Optim.}, 7(1):33--50, 1995.

\bibitem{SchenckMR2084071}
H.~Schenck.
\newblock Lattice polygons and {G}reen's theorem.
\newblock {\em Proc. Amer. Math. Soc.}, 132(12):3509--3512 (electronic), 2004.

\bibitem{SolusUhlerYoshida}
L.~Solus, C.~Uhler, and R.~Yoshida.
\newblock Extremal positive semidefinite matrices for graphs without $k_5$
  minors.
\newblock Preprint available at \url{http://arxiv.org/abs/1506.06702}, 2015.

\bibitem{TIB}
M.~Teixidor I~Bigas.
\newblock Green's conjecture for the generic {$r$}-gonal curve of genus {$g\geq
  3r-7$}.
\newblock {\em Duke Math. J.}, 111(2):195--222, 2002.

\bibitem{UhlerMR3014306}
C.~Uhler.
\newblock Geometry of maximum likelihood estimation in {G}aussian graphical
  models.
\newblock {\em Ann. Statist.}, 40(1):238--261, 2012.

\bibitem{V1}
C.~Voisin.
\newblock Green's generic syzygy conjecture for curves of even genus lying on a
  {$K3$} surface.
\newblock {\em J. Eur. Math. Soc. (JEMS)}, 4(4):363--404, 2002.

\bibitem{V2}
C.~Voisin.
\newblock Green's canonical syzygy conjecture for generic curves of odd genus.
\newblock {\em Compos. Math.}, 141(5):1163--1190, 2005.

\end{thebibliography}

\end{document}